\newtheorem{teo}{Theorem}[section]
\newtheorem{lemma}[teo]{Lemma}
\newtheorem{prop}[teo]{Proposition}
\newtheorem{cor}[teo]{Corollary}
\theoremstyle{definition}
\newtheorem{definiz}[teo]{Definition}
\newtheorem{example}[teo]{Example}
\newtheorem{remark}[teo]{Remark}
\newtheorem{notation}[teo]{Notation}
\newtheorem{assumption}[teo]{Assumption}
\newcommand{\R}{\mathbb R}
\newcommand{\C}{\mathbb C}
\newcommand{\So}{\mathcal{S}}
\newcommand{\B}{\mathbb B}
\newcommand{\SF}{\mathbb S}
\DeclareMathOperator{\pv}{\wedge \mspace{-9.5mu}_* \ }
\newcommand{\N}{\mathbb N}
\newcommand{\HH}{\mathbb H}
\theoremstyle{remark} 
\numberwithin{equation}{section}
\begin{document}

\title[one-slice preserving functions.]{S-regular functions which preserve a complex slice}
\author[A. Altavilla]{A. Altavilla${}^{\dagger,\ddagger}$}\address{Altavilla Amedeo: Dipartimento Di Matematica, Universit\`a di Roma ``Tor Vergata", Via Della Ricerca Scientifica 1, 00133, Roma, Italy} \email{altavilla@mat.uniroma2.it}

\author[C. de Fabritiis]{C. de Fabritiis${}^{\dagger}$}\address{Chiara de Fabritiis: Dipartimento di Ingegneria Industriale e Scienze
Matematiche, Universit\`a Politecnica delle Marche, Via Brecce Bianche, 60131,
Ancona, Italia} \email{fabritiis@dipmat.univpm.it}
\thanks{${}^{\dagger}$GNSAGA of INdAM, ${}^{\ddagger}$FIRB 2012 {\sl Geometria differenziale e teoria geometrica delle funzioni} and SIR grant {\sl ``NEWHOLITE - New methods in holomorphic iteration''} n. RBSI14CFME}

\date{\today }

\subjclass[2010]{Primary 30G35; secondary 32A30,30H50,16D40}
\keywords{Slice-regular functions, $*$-product of slice-regular functions, slice preserving functions, one-slice preserving functions}

\begin{abstract} 
We study global properties of quaternionic slice regular functions  (also called \textit{s-regular}) defined on symmetric slice domains.
In particular, thanks to new techniques and points of view, we can characterize the property
of being one-slice preserving in terms of the projectivization of the vectorial part of the function. We also define a ``Hermitian'' product on slice regular functions which gives us the possibility to express the $*$-product of two s-regular functions in terms of the scalar product of  suitable functions constructed starting from $f$ and $g$.  
Afterwards we are able to determine, under different assumptions,
when the sum, the $*$-product and the $*$-conjugation of two slice regular functions preserve a complex slice. 
We also study when the $*$-power of a slice regular function has this property or 
when it preserves all complex slices.
To obtain these results we prove two factorization theorems: in the first one,
we are able to split a slice regular function into the product of two functions: one keeping track of the zeroes and the other 
which is never-vanishing;
in the other one we give necessary and sufficient conditions for a slice regular function (which preserves all  complex slices)
to be the symmetrized of a suitable slice regular one.
\end{abstract}
\maketitle

\section{Introduction}

Since the seminal paper by Gentili and Struppa \cite{G-St}, several articles \cite{C-G-S-St,C-GC-S,Gal-S,G-Sal-S,G-S-St,G-V,G-M-P,G-P,R-W,S,V}
and some monographs \cite{C-S-St-1,C-S-St-2,G-S-St} have been published in the field of (quaternionic) slice regular functions. The theory was mainly built to allow quaternionic polynomials to be regular
and to mime, in some sense, the theory of complex holomorphic functions.
For this reason many works in this field address the search for analogies with the theory
of holomorphicity.

This paper, and the results enclosed therein, points out some global
behaviour of slice regular functions which are proper of the realm of quaternions
and have not a ``complex analogous''.  This fact is 
investigated by means of the new techniques
concerning the $*$-product of slice regular functions
partially introduced in \cite{A-dF} and developed in the present work.

To state some of these results we begin with some notation and known fact.
The main reference for this part is the monograph \cite{G-S-St}.

The space of quaternions $\HH$ is the four dimensional associative algebra
generated by $1,i,j,k$ with usual relations $i^{2}=j^{2}=k^{2}=-1$ and $ij=-ji=k$.
The division algebra of quaternions can be split as $\HH=\R\oplus {\rm Im}\HH$ and inside
${\rm Im} \HH\simeq\R^3$ we identify the sphere of imaginary units
$$
\SF:=\{q\in\HH\,|\,q^{2}=-1\}=\{q_{1} i+q_{2} j+q_{3} k\,|\, q_{1}^{2}+q_{2}^{2}+q_{3}^{2}=1\}\simeq S^{2}\subset {\rm Im}\HH.
$$

A quaternion $q\in\HH$ will then be written in the following ways:
$$
q=q_{0}+q_{1}i+q_{2}j+q_{3}k=q_{0}+I\beta=q_{0}+\vec q,
$$
where, $q_{i}\in\R$ for $i=0,1,2,3$, $\beta\in\R$, $I\in \SF$ and $\vec q\in{\rm Im}\HH$.
Each of the previous notations will be useful for some purpose. In particular
we point out that the product of two quaternions $q=q_{0}+\vec q, p=p_{0}+\vec p$ 
can be written as
\begin{equation*}
qp=q_{0}p_{0}-\langle \vec q,\vec p\rangle +q_{0}\vec p +p_{0}\vec q+\vec q\wedge \vec p,
\end{equation*}
where $\langle \vec q,\vec p\rangle$ and $\vec q\wedge \vec p$ denote the standard Euclidean  and vectorial product of ${\rm Im}\HH\simeq \R^{3}$, respectively.
In $\HH$ we will consider the usual conjugation $q=q_{0}+\vec q\mapsto q^{c}=q_{0}-\vec q$, so that $qq^{c}=q_{0}^{2}+\langle \vec q,\vec q\rangle=q_{0}^{2}+q_{1}^{2}+q_{2}^{2}+q_{3}^{2}=|q|^{2}$.
We also denote the open unitary ball as
$$
\mathbb{B}=\{q\in\HH\,|\,|q|< 1\}.
$$

We now pass to quaternionic functions theory.
The definition of slice regularity is based on the observation that is possible to 
\textit{unfold} the space of quaternions in the following way:
\begin{equation*}
\HH=\bigcup_{I\in\SF}\C_{I},\quad \C_{I}:={\rm Span}_{\R}(1,I).
\end{equation*}
We call \textit{slice} any complex line of the form $\C_J$, for $J\in\SF$.
From this point of view one can see that is possible to consider a non-constant complex
structure over $\HH\setminus\R$ that is \textit{tautological} with respect
to the slice (see for instance~\cite{AAtwistor,G-Sal-S}). Before recalling the notion of regularity we establish the family of domains
where we will define our spaces of functions.
\begin{assumption}
In the whole paper $\Omega\subset\HH$ will denote a symmetric slice domain, see~\cite{G-S-St}, that is a domain such that
\begin{itemize}
\item for any $q=\alpha+I\beta\in\Omega$, the set $\SF_{q}:=\{\alpha+J\beta\,|\,J\in\SF\}$ is contained in $\Omega$;
\item the intersection $\Omega \cap\R$ is non-empty.
\end{itemize}
\end{assumption}

Notice that  $\SF_{q}$ consists of the single point $q$ is $q\in\R$  and it a 2-dimensional sphere if $q\notin\R$.

\begin{definiz}
A function $f:\Omega\to\HH$ is said to be slice regular if all its restriction $f_{J}:=f|_{\C_{J}\cap\Omega}$ are holomorphic with respect to the tautological complex structure, i.e. for any $J\in\SF$, it holds
$$
\frac{1}{2}\left(\frac{\partial}{\partial\alpha}+J\frac{\partial}{\partial\beta}\right)f_{J}(\alpha+J\beta)\equiv 0.
$$
The family of slice regular functions over a fixed domain $\Omega$ is a real vector space, a right $\HH$-module and 
in this paper it will be denoted by $\So(\Omega)$ (in some of the references, see e.g.~\cite{G-M-P}, this symbol denotes the space of continuous slice functions which are not necessarily regular).
\end{definiz}

Examples of slice regular functions are quaternionic polynomials and quaternionic
power series (in their domain of convergence), with right coefficients.

Thanks to the \textit{Representation Formula} (see~\cite[Theorem 1.15]{G-S-St} and \cite{C-G-S-St}), it is known that the hypothesis on
the symmetry of the domain is not restrictive, while the one involving the role of the
real line is included to avoid some degenerate cases, see e.g. \cite{A-CVEE}.
Again, thanks to the Representation Formula, if $g:\Omega\cap\C_I\to\HH$ is any holomorphic function, then it is possible to extend
it in a unique way to a slice regular function $f:\Omega\to\HH$, such $f$ will be called \textit{regular extension}
of $g$, (see~\cite{G-S-St}, p. 9).

The following two natural subsets of $\So(\Omega)$ are of particular interest for our research.
\begin{definiz}
A slice regular function $f\in\So(\Omega)$ is said to be
\begin{itemize}
\item \textit{slice preserving} if $f(\Omega\cap\C_{I})\subset\C_{I}$, for all $I\in\SF$;

\item \textit{one-slice preserving} if there exists $J\in\SF$ such that $f(\Omega\cap\C_{J})\subset\C_{J}$;
for a fixed $J\in\SF$, these functions will also be called $\C_J$-\textit{preserving}.
\end{itemize}
The set of slice preserving functions will be denoted by $\So_{\R}(\Omega)$, while the set of 
$\C_{J}$-preserving functions by $\So_{J}(\Omega)$.
\end{definiz}

Examples of slice preserving functions are quaternionic polynomials and quaternionic
power series (in their domain of convergence) with \textit{real} coefficients; examples of $\C_J$-preserving functions are quaternionic polynomials and quaternionic
power series (in their domain of convergence) with right coefficients which belong to $\C_J$.

Slice preserving and one-slice preserving functions are special slice regular functions which
are more likely to be studied with classical complex methods. 
In particular, the last ones can give a deeper insight on the general case
being, in some sense, the middle point between the theory of holomorphic functions 
and the genuine quaternionic case (see for instance~\cite{V}). 

Thanks to the following result (see~\cite{C-GC-S,G-M-P}), it is possible to split a slice regular function
into a linear combination of any basis of $\HH$ with slice preserving regular functions as coefficients.

\begin{prop}\label{GMP} 
Let $\{1,I_0,J_0,K_0\}$ be a basis of $\HH$. Then the map 
$$
\left(\So_\R(\Omega)\right)^4\ni (f_0,f_1,f_2,f_3)\mapsto f_0+f_1I_0+f_2J_0+f_3K_0\in \So(\Omega)
$$
is bijective. In particular it follows that given any $f\in \So(\Omega)$ there exist and are unique $f_0,f_1,f_2,f_3\in\So_\R(\Omega)$ such that 
$$
f=f_0+f_1I_0+f_2J_0+f_3K_0.
$$
Moreover if $I_0\in\SF$ then $f\in \So_{I_0}(\Omega)$ iff $f_2\equiv f_3\equiv 0$. 
\end{prop}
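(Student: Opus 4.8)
The plan is to prove bijectivity by exhibiting both injectivity and surjectivity through a restriction-and-extension argument, using that slice regular functions are determined by their restriction to a single slice $\C_I$ together with the Representation Formula. First I would fix $I\in\SF$ and complete $\{1,I\}$ to an orthonormal basis $\{1,I,J,IJ\}$ of $\HH$ (the general basis $\{1,I_0,J_0,K_0\}$ can be handled afterwards by a real-linear change of coordinates, since every $4$-tuple spanning $\HH$ is obtained from an orthonormal one by an invertible real matrix, and the map in the statement is real-linear in $(f_0,f_1,f_2,f_3)$; this reduces the claim to the orthonormal case). On the slice $\C_I\cap\Omega$, write $f_I(\alpha+I\beta)=u+Iv$ with $u,v$ complex-valued in the variable $\alpha+I\beta$; splitting further $u=u_0+\text{(}I\text{-free imaginary part)}$ is not quite the right move — instead decompose the $\HH$-valued $f_I$ along the \emph{real} basis $\{1,I,J,IJ\}$ as $f_I=g_0+g_1 I+g_2 J+g_3(IJ)$ with each $g_\ell:\C_I\cap\Omega\to\C_I$ holomorphic (this is possible precisely because right-multiplication by $1,I,J,IJ$ gives an $\R$-basis of $\HH$ and $f_I$ is $\C_I$-valued-componentwise after this splitting). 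Each $g_\ell$ is holomorphic as a $\C_I$-valued function on $\C_I\cap\Omega$, hence extends uniquely to a slice preserving $f_\ell\in\So_\R(\Omega)$ by the regular extension discussed in the excerpt (the extension of a $\C_I$-to-$\C_I$ holomorphic function is slice preserving). Then $f_0+f_1I_0+f_2J_0+f_3K_0$ and $f$ are slice regular functions agreeing on $\C_I\cap\Omega$, so by the identity principle for slice regular functions on symmetric slice domains they coincide; this gives surjectivity.

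For injectivity, suppose $f_0+f_1I_0+f_2J_0+f_3K_0\equiv 0$ with $f_\ell\in\So_\R(\Omega)$. Restricting to a real point $x\in\Omega\cap\R$ (which is nonempty by the Assumption) gives $f_0(x)+f_1(x)I_0+f_2(x)J_0+f_3(x)K_0=0$ with each $f_\ell(x)\in\R$, and since $\{1,I_0,J_0,K_0\}$ is an $\R$-basis of $\HH$ we get $f_\ell(x)=0$ for all $x\in\Omega\cap\R$ and all $\ell$. A slice preserving function vanishing on $\Omega\cap\R$ vanishes identically: indeed its restriction $f_{\ell,I}$ to any slice $\C_I$ is holomorphic and $\C_I$-valued, vanishing on the real interval $\Omega\cap\R$ which has an accumulation point in $\C_I\cap\Omega$, so $f_{\ell,I}\equiv 0$, and this for every $I$ forces $f_\ell\equiv 0$. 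Hence the map is injective, and uniqueness of the decomposition follows.

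For the last assertion, assume $I_0\in\SF$ and complete to an orthonormal basis $\{1,I_0,J_0,K_0\}$ with $K_0=I_0J_0$. If $f_2\equiv f_3\equiv 0$ then $f=f_0+f_1I_0$ maps $\C_{I_0}\cap\Omega$ into $\C_{I_0}=\mathrm{Span}_\R(1,I_0)$ because $f_0,f_1$ are $\C_{I_0}$-valued there (being slice preserving), so $f\in\So_{I_0}(\Omega)$. Conversely, if $f\in\So_{I_0}(\Omega)$, then on $\C_{I_0}\cap\Omega$ we have $f_{I_0}=f_0+f_1I_0+f_2J_0+f_3K_0$ with all $f_\ell$ taking values in $\C_{I_0}$; writing each $f_\ell=a_\ell+I_0 b_\ell$ with $a_\ell,b_\ell$ real-valued on $\C_{I_0}\cap\Omega$ and expanding, the $J_0$- and $K_0$-components of $f_{I_0}$ are $a_2+$(something)$\,$ and must vanish since $f_{I_0}$ is $\C_{I_0}$-valued; a short computation using $I_0J_0=K_0$, $I_0K_0=-J_0$ shows these components are $a_2 - b_3$ along $J_0$ and $a_3 + b_2$ along $K_0$ (up to sign conventions), which is not immediately zero, so I would instead argue more cleanly: the $\C_{I_0}$-valued holomorphic restriction $f_{I_0}$ of $f$ has a \emph{unique} regular extension, and $f_0+f_1I_0$ (extending $a_0+I_0b_0 + (a_1+I_0b_1)I_0$, which is exactly $f_{I_0}$) is already slice regular and agrees with $f$ on $\C_{I_0}$, so by uniqueness $f=f_0+f_1I_0$, i.e.\ $f_2\equiv f_3\equiv 0$; then the general-basis case follows by the same real-linear reduction as before. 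The main obstacle I anticipate is the bookkeeping in this last step — making sure the "restriction along a slice, decompose along the real basis, extend" procedure interacts correctly with the hypothesis $I_0\in\SF$ and does not secretly use that $J_0,K_0\in\SF$ — but once the uniqueness of the regular extension is invoked in the right place, this is routine.
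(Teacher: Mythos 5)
Your injectivity argument and the reduction to an orthonormal basis are sound, and the closing characterization of $\So_{I_0}(\Omega)$ is recoverable, but the surjectivity step contains a genuine gap. You decompose $f_I$ ``along the real basis $\{1,I,J,IJ\}$'' into four $\C_I$-valued holomorphic components $g_\ell$ and then assert parenthetically that the regular extension of a $\C_I$-to-$\C_I$ holomorphic function is slice preserving. Both halves of this are problematic. Decomposing along an $\R$-basis produces \emph{real-valued} components, which are not holomorphic; what the splitting $\HH=\C_I\oplus\C_I J$ actually gives is $f_I=F_1+F_2J$ with $F_1,F_2$ holomorphic and $\C_I$-valued, and the further splitting $F_1=g_0+g_1I$ with both $g_0,g_1$ $\C_I$-valued is wildly non-unique (e.g.\ $g_0=F_1$, $g_1=0$), so ``the'' $g_\ell$ are not yet defined. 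Worse, the asserted lemma is false: the constant function $I$ on $\C_I\cap\Omega$ is holomorphic and $\C_I$-valued, and its regular extension is the constant $I$ on $\Omega$, which lies in $\So_I(\Omega)$ but certainly not in $\So_\R(\Omega)$. The regular extension of $g$ is slice preserving precisely when $g$ is \emph{intrinsic}, i.e.\ $g(\bar z)=\overline{g(z)}$ on $\C_I\cap\Omega$; this is where the symmetry of $\Omega$ under $q\mapsto q^c$ must enter, and the fact that your surjectivity argument never invokes it is the symptom of the missing idea.

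The repair is the standard symmetrization: with $f_I=F_1+F_2J$ as above, set $g_0(z)=\tfrac12\bigl(F_1(z)+\overline{F_1(\bar z)}\bigr)$ and $g_1(z)=-\tfrac12\bigl(F_1(z)-\overline{F_1(\bar z)}\bigr)I$, and likewise $g_2,g_3$ from $F_2$. These are holomorphic, $\C_I$-valued, intrinsic, and satisfy $F_1=g_0+g_1I$, $F_2=g_2+g_3I$; their regular extensions $f_\ell$ then genuinely lie in $\So_\R(\Omega)$, and your identity-principle conclusion gives $f=f_0+f_1I+f_2J+f_3(IJ)$. Once intrinsicness is available, the ``short computation'' you abandoned in the last paragraph in fact closes on its own: the conditions $a_2=b_3$ and $a_3=-b_2$ combined with $a_\ell(\bar z)=a_\ell(z)$, $b_\ell(\bar z)=-b_\ell(z)$ force $a_2=a_3=b_2=b_3\equiv0$ on $\C_{I_0}\cap\Omega$, hence $f_2\equiv f_3\equiv0$ by the identity principle, so the detour through uniqueness of the regular extension, while also valid, is unnecessary. (For the record, the paper does not prove this proposition but imports it from the cited references, so your argument must stand on its own; with the symmetrization inserted, it does.)
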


The previous result allows us to define in a more natural way the regular conjugate and the $*$-product, already
presented in~\cite{G-S-St}.

\begin{definiz}
Given any basis $\{1,I_0,J_0,K_0\}$ of $\HH$ and  $f=f_0+f_1I_0+f_2J_0+f_3K_0\in\So(\Omega)$ we define its conjugate as $f^c=f_0+f_1I_0^c+f_2J_0^c+f_3K_0^c$.
\end{definiz}
In the previous definition, if $\{I_0,J_0,K_0\}$ is an orthonormal basis of ${\rm Im}\HH$, then $f^c=f_0-f_1I_0-f_2J_0-f_3K_0$.

From the conjugation just defined we can isolate the \textit{real} and \textit{vectorial} parts of a slice regular function.

\begin{definiz}
Given $f\in\So(\Omega)$, we define the slice-regular functions $f_0$ and $f_v$  on $\Omega$ by
$f_0=\frac{f+f^c}2$, $f_v=\frac{f-f^c}2$.
\end{definiz}

Clearly, fixed $f\in\So(\Omega)$, the functions $f_0$ and $f_v$ only depend on the conjugation, in particular $f=f_0+f_v$, $f^c=f_0-f_v$
and, according to the notation used in Proposition~\ref{GMP}, $f_v=f_1I_0+f_2J_0+f_3K_0$.

In this new language the $*$-product can be defined by associativity and distributivity over $\So_{\R}(\Omega)$ in the following way.
\begin{definiz}\label{starprod}
Given $f=f_0+f_1I_0+f_2J_0+f_3K_0,g=g_0+g_1I_0+g_2J_0+g_3K_0\in\So(\Omega)$ their $*$-product is given by
\begin{align*}
f*g&=
f_0g_0-f_1g_1-f_2g_2-f_3g_3+f_0(g_1I_0+g_2J_0+g_3K_0)+g_0(f_1I_0+f_2J_0+f_3K_0)\\
&+(f_2g_3-f_3g_2)I_0+(f_3g_1-f_1g_3)J_0+(f_1g_2-f_2g_1)K_0,
\end{align*}
where the products in the right hand side of the equality are the pointwise products (functions $f_0,\dots,f_3$ are slice preserving functions, so our definition coincides with the one given in~\cite{G-S-St}). 
\end{definiz}

\begin{remark}
Notice that  for slice preserving and $\C_J$-preserving functions, the $*$-product has special features. First of all if
$f\in\So_\R(\Omega)$ and $g\in\So(\Omega)$, then we have $f*g=g*f=fg$ (that is, the $*$-product $f*g$ coincides with the pointwise product $fg$). For any $J\in\SF$ and any couple of functions $f,g\in\So_J(\Omega)$ we have $f*g=g*f$.
Finally if $\rho_1,\rho_2\in \So_\R(\Omega)$ and $a_1,a_2\in\HH$ then $(\rho_1a_1)*(\rho_2a_2)=\rho_1\rho_2a_1a_2$.
\end{remark}

\begin{remark}
As $\So_{\R}(\Omega)$ is a unitary commutative ring, Proposition~\ref{GMP} can be interpreted as the fact 
that $\So(\Omega)$ is a free module of rank $4$ over $\So_{\R}(\Omega)$ and $\So_{I_{0}}(\Omega)$ is a 
free submodule of rank $2$, for any $I_{0}\in\SF$. 
Following this point of view, we provide characterizations of the desired functions in terms of cosets of suitable submodules: in some sense, results like Theorem~\ref{thm1}, \ref{primo-coniugio} and \ref{secondo-coniugio} can be seen as a parametric description, while Proposition~\ref{prodstarfI} displays ``bilinear'' equations.
\end{remark}

Given $f,g\in\So(\Omega)$, the formula in Definition~\ref{starprod} can be simplified using the operators
\begin{equation}\label{prodotto-vettore-scalare}
(f\pv  g)=\frac{(f*g)-(g*f)}2,
\qquad 
\langle f, g\rangle_*=(f*g^c)_0.
\end{equation}
In fact, in terms of the notation of Proposition~\ref{GMP} we can rewrite the above intrinsic expressions in the following form:
\begin{align*}
f\pv  g &=(f_2g_3-f_3g_2)I_0+(f_3g_1-f_1g_3)J_0+(f_1g_2-f_2g_1)K_0 \\
\langle f, g\rangle_*&=f_0g_0+f_1g_1+f_2g_2+f_3g_3.
\end{align*}
At last, with the use of the above operators, we can write
$$
f*g=f_0g_0-\langle f_v, g_v\rangle_*+f_0g_v+g_0f_v+f_v\pv  g_v,
$$
in complete analogy with the quaternionic product in $\HH$.

Using the conjugate function and the $*$-product, it is possible to define the symmetrized of a slice regular
function. In some sense, this function plays the same formal role of the square norm in the space of quaternions.
\begin{definiz}
Given $f=f_0+f_1i+f_2j+f_3k\in\So(\Omega)$, we define its \textit{symmetrized} function $f^s$
as 
$$
f^s=f*f^c=\langle f, f\rangle_*=f_0^2+f_1^2+f_2^2+f_3^2.
$$ 
\end{definiz}
We remark that for any $f\in\So(\Omega)$ its symmetrized function $f^s$, which is called normal function and is denoted by $\mathcal {N}(f)$ by some other authors (see for instance~\cite{G-P}) because it gives the norm of $f$ in the $*$-algebra of slice regular functions, belongs to $\So_\R(\Omega)$.

We now spend some words on the zero locus of a slice regular function (see~\cite{G-S, G-S-St,G-P}).
We start with a notation: given any $q=\alpha+I\beta\in\HH\setminus\R$ we set
$$
\SF_{\alpha+I\beta}:=\{\alpha+J\beta\,|\,J\in\SF\}.
$$
It is known that, if $f\in\So(\Omega)\setminus\{0\}$, then its zero locus is closed with empty interior; moreover it consists of isolated points
and isolated $2$-spheres of the form $\SF_{\alpha+I\beta}$.
In the particular cases of slice preserving or one-slice preserving functions, the previous assertion specializes as follows.
If $f$ belongs to $\So_\R(\Omega)\setminus\{0\}$, then its zero set consists of isolated \textit{real} points
and isolated $2$-spheres of the form $\SF_{\alpha+I\beta}$. If $f$ belongs to $\So_J(\Omega)\setminus\{0\}$, for some $J\in\SF$, then its zero set consists of isolated points belonging to $\C_J$
and isolated $2$-spheres of the form $\SF_{\alpha+I\beta}$.

As in the complex case, it is possible, for slice regular functions, to factor out a zero as follows.
Let $f$ be an element of $\So(\Omega)\setminus\{0\}$ and $\SF_{\alpha+I\beta}\subset \Omega$. Then
there exist $m,n\in\N$ and $p_1,\dots,p_n\in\SF_{\alpha+I\beta}$, with $p_\nu\neq p_{\nu+1}^c$ for all $\nu=1,\dots, n-1$, such that
\begin{equation}\label{factorzero}
f(q)=[(q-\alpha)^2+\beta^2]^m(q-p_1)*\dots*(q-p_n)*g(q),
\end{equation}
for some $g\in\So(\Omega)$ which does not have zeros in $\SF_{\alpha+I\beta}$.
Thanks to this factorization it is possible to introduce a notion of multiplicity of a zero in the following sense (see~\cite{G-S-St}).

\begin{definiz}
Let $f\in\So(\Omega)\setminus\{0\}$ and let $\SF_{\alpha+I\beta}\subset \Omega$ with $\beta\neq 0$.
Let $m,n\in\N$ and $p_1,\dots,p_n\in\SF_{\alpha+I\beta}$, with $p_\nu\neq p_{\nu+1}^c$ for all $\nu=1,\dots, n-1$, such that
Equation~\eqref{factorzero} holds for $f$ and some regular function $g$ which never vanishes in $\SF_{\alpha+I\beta}$.
We then say that $2m$ is the \textit{spherical multiplicity} of $\SF_{\alpha+I\beta}$ and that $n$ is the \textit{isolated multiplicity} of 
$p_1$.
If $x\in\R$ is such that $f(x)=0$, then we call \textit{isolated multiplicity} of $f$ at $x$ the number $k\in\N$ such that
$$
f(q)=(q-x)^kh(q),
$$
for some $h\in\So(\Omega)$ such that $h(x)\neq 0$.
\end{definiz}


\begin{remark}
It is known that if $\alpha+I\beta$ is such that $f(\alpha+I\beta)=0$, then any point in the set $\SF_{\alpha+I\beta}$ is a zero for $f^s$.
In particular $\So(\Omega)$ is an integral domain and $f_v^s\equiv0$ if and only if $f_v\equiv 0$. Indeed, since $\Omega$ is a slice domain if $f\not\equiv0$ and $g\not\equiv0$, it is enough to choose a real point $x_0\in\Omega$ at which neither $f$ nor $g$ vanish and $(f*g)(x_0)=f(x_0)g\left((f(x_0))^{-1}x_0f(x_0)\right)=f(x_0)g(x_0)\neq0$, where the first equality is due to Theorem~3.4 in~\cite{G-S-St}.
\end{remark}

We now have all the prerequisites to state the results contained in the paper. 
We list them by giving the essential structure of the paper.

Next section is devoted to give an intrinsic characterization of the family of one-slice preserving functions
in terms of the projectivization of their \textit{vectorial} part.

In Section 3 we prove two factorization results which will be used in Theorem~\ref{coniugato1}.
The first one (Proposition~\ref{nostroweierstrass}) is a Weierstrass-like factorization theorem for slice regular functions
with no non-real isolated zeroes.
In the second one (Theorem~\ref{radicesimmetrizzata}), we give necessary and sufficient condition for
a slice preserving function $\mu$ to be the symmetrized of the function $h\in\So(\Omega)$, that is $\mu=h^{s}=h*h^{c}$.

In Section 4 we are able to provide necessary and sufficient conditions for two functions $f\in\So_{I_0}(\Omega)$ and $g\in\So_{J_0}(\Omega)$,
in order to determine whether their sum or $*$-product is $\C_{K_0}$-preserving for some $K_0\in\SF$. Then, in Theorem~\ref{prodstarfI}
we deepen our study of the $*$-product of $f$ and $g$ dropping out the hypothesis on $f$ and $g$ to be one-slice preserving.

Afterwards, in Section 5 we study the conjugation of two slice regular functions $f$ and $g$:
$$
h*f*h^c=g.
$$
We firstly impose the condition on $f$ (Theorem~\ref{primo-coniugio}) then on $h$ (Theorem~\ref{secondo-coniugio}) to be one-slice preserving in order to obtain necessary and sufficient conditions which guarantee $h*f*h^c$ is one-slice preserving.
Then in Theorem~\ref{coniugato1} we study again the conjugation prescribing $g\in\So(\Omega)$, $f\in\So_{I_0}(\Omega)$
and asking $h$ to be one-slice preserving. In Proposition~\ref{coniugato2} we study
the same problem, exchanging the requests on $f$ and $h$.

Exploiting the new results obtained in Section 5, in Section 6 we come back to 
$*$-products and we give necessary and sufficient conditions on $f, h\in\So(\Omega)$
in order that both $f*h$ and $h*f$ are one-slice preserving. 


In  the last section we examine the case, of $*$-power of a slice regular function $f$. After ruling out the trivial 
cases we give necessary and sufficient conditions for the $*$-power of the function $f$ to be either one-slice
preserving or slice preserving, finding an interesting link with some non-trivial result in real algebraic geometry.
In fact the function $f^{*d}$ belongs to $\So_\R(\Omega)$ if the two functions $f_0$ and $f_v$ are the zeros
of a particular binary form of degree $[(d-1)/2]$ with real zeroes.

All the listed results are enriched with explicit examples and remarks on the hypotheses.

At last we point out that, since the initial result (Proposition~\ref{GMP}), holds for the theory of slice regularity over 
a generic alternative $*$-algebra (see~\cite[Lemma 2.4]{G-P-series}),  
the new techniques we are going to introduce may be generalized to this wider context.

We end this introduction with two acknowledgements.
We warmly thank Prof. G. Ottaviani (Universit\`a di Firenze) for indicating us the results contained in~\cite{M} and Prof. L. Demeio (Universit\`a Politecnica delle Marche) for helping us with the explicit computation of the roots of $Q_d$ appearing
in Example~\ref{exbinforms}. 


\section{Preliminary results}

In this section we introduce a ``Hermitian'' product defined on $\So(\Omega)$ which allows us to read
the $*$-product in terms of the scalar product introduced in~\eqref{prodotto-vettore-scalare}. 
We also expose an intrinsic characterization of the family of one-slice preserving functions
based on Proposition~\ref{GMP}. 
\begin{definiz}\label{definizione-hermitiana}
The ``Hermitian'' product $\mathcal{H}_{*}:\So(\Omega)\times\So(\Omega)\to\So(\Omega)$ is given by
$$\mathcal{H}_{*}(f,g)=f*g^c,$$ for any $f,g\in\So(\Omega)$.
\end{definiz}

A trivial computation shows that the map $\So(\Omega)\ni f \mapsto \mathcal{H}_{*}(f,g)$ is left-$\So(\Omega)$-linear for any fixed $g\in\So(\Omega)$ and that $\left(\mathcal{H}_{*}(g,f)\right)^c=\mathcal{H}_{*}(f,g)$  for any $f,g\in\So(\Omega)$, ensuring that $\mathcal{H}_{*}$ is in some sense Hermitian.

For any orthonormal basis $i,j,k$ of  $\rm{Im} \HH$, Proposition~\ref{GMP} extends the natural relation between scalar and Hermitian product on $\HH$ to $\So(\Omega)$, giving an analogous to the formula in the complex case.

\begin{prop}
For any $f,g\in\So(\Omega)$ and any orthonormal basis $i,j,k$ of  $\rm{Im} \HH$, we have
\begin{equation}\label{relazione-hermitiana}
\mathcal{H}_{*}(f,g)=\langle f,g \rangle_*+\langle f,i*g \rangle_* i+\langle f, j*g\rangle_* j+\langle f,k*g \rangle_* k.
\end{equation}
\end{prop}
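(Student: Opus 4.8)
The plan is to reduce everything to the known real-coefficient decomposition provided by Proposition~\ref{GMP} and the explicit formulas for $\langle\cdot,\cdot\rangle_*$ and for the $*$-product. First I would write $f=f_0+f_1i+f_2j+f_3k$ and $g=g_0+g_1i+g_2j+g_3k$ with all $f_\ell,g_\ell\in\So_\R(\Omega)$, which is legitimate since $\{1,i,j,k\}$ is a basis of $\HH$ and in fact an orthonormal basis of $\mathrm{Im}\,\HH$. The target identity~\eqref{relazione-hermitiana} is an equality in the free $\So_\R(\Omega)$-module $\So(\Omega)$ of rank $4$, so it suffices to check that the four $\So_\R(\Omega)$-components (along $1,i,j,k$) of the two sides agree.

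For the left-hand side I would use $\mathcal{H}_*(f,g)=f*g^c$ together with the Definition~\ref{starprod} formula for the $*$-product, noting $g^c=g_0-g_1i-g_2j-g_3k$ because $\{i,j,k\}$ is orthonormal. This gives
\[
f*g^c=\Bigl(f_0g_0+f_1g_1+f_2g_2+f_3g_3\Bigr)+(\cdots)i+(\cdots)j+(\cdots)k,
\]
where the scalar component is exactly $\langle f,g\rangle_*$ by the formula recorded after~\eqref{prodotto-vettore-scalare}, and the $i$-, $j$-, $k$-components are explicit real-bilinear expressions in the $f_\ell,g_\ell$ (the vectorial part of $f*g^c$). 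For the right-hand side I would compute each coefficient separately: $\langle f,g\rangle_*$ is already in the list of formulas; for $\langle f,i*g\rangle_*$ one first computes $i*g$. Since $i\in\So_\R(\Omega)\cdot i$, i.e.\ $i$ is the constant function with value $i\in\C_i\subset\SF$, the product $i*g$ is obtained by left multiplication, and using $i^2=-1$, $ij=k$, $ik=-j$ one gets $i*g=-g_1+g_0i-g_3j+g_2k$ (as real-coefficient tuple). Applying the scalar-product formula then yields $\langle f,i*g\rangle_*=-f_0g_1+f_1g_0-f_2g_3+f_3g_2$, and similarly for $j*g$ and $k*g$.

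The final step is to check that these four expressions, namely $\langle f,g\rangle_*$, $\langle f,i*g\rangle_*$, $\langle f,j*g\rangle_*$, $\langle f,k*g\rangle_*$, coincide respectively with the $1$-, $i$-, $j$-, $k$-components of $f*g^c$ computed above. This is a direct matching of the bilinear coefficients; in fact it is precisely the same bookkeeping that, at the level of the algebra $\HH$ itself, expresses $pq^c=\langle p,q\rangle+\langle p,iq\rangle i+\langle p,jq\rangle j+\langle p,kq\rangle k$ for $p,q\in\HH$, and one could alternatively just invoke that pointwise identity together with the fact that all the ingredients are $\So_\R(\Omega)$-bilinear and the $*$-product of real-coefficient combinations is governed by the same structure constants as the quaternionic product. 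I do not expect any genuine obstacle here: the only thing to be careful about is the sign convention coming from $g^c$ (which is why orthonormality of $\{i,j,k\}$ is hypothesized) and the order of multiplication in $i*g$ versus $g*i$, but since $\langle\cdot,\cdot\rangle_*$ only sees the ``real part'' after a further $*$-conjugation, the computation is symmetric enough that these subtleties cancel. The main (minor) point of care is simply to carry out the six products $i*g$, $j*g$, $k*g$ and their pairings consistently.
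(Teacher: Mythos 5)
Your proposal is correct and follows essentially the same route as the paper: decompose $f$ and $g$ via Proposition~\ref{GMP}, expand $f*g^c$ by the formula in Definition~\ref{starprod}, and match the four $\So_\R(\Omega)$-components with $\langle f,g\rangle_*$, $\langle f,i*g\rangle_*$, $\langle f,j*g\rangle_*$, $\langle f,k*g\rangle_*$. Your intermediate formulas (e.g.\ $i*g=-g_1+g_0i-g_3j+g_2k$ and $\langle f,i*g\rangle_*=-f_0g_1+f_1g_0-f_2g_3+f_3g_2$) agree with those in the paper's proof.
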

\begin{proof}
According to Proposition~\ref{GMP},  
let us write $f=f_0+f_1i+f_2j+f_3k$ and  $g=g_0+g_1i+g_2j+g_3k$.
By direct computation, the left hand term of~\eqref{relazione-hermitiana} amounts to
\begin{align*}
f_0g_0+f_1g_1+f_2g_2+f_3g_3+&(-f_0g_1+f_1g_0-f_2g_3+f_3g_2)i\\+&(-f_0g_2+f_1g_3+f_2g_0-f_3g_1)j+(-f_0g_3-f_1g_2+f_2g_1+f_3g_0)k.
\end{align*}
A straightforward application of the definition of $\langle \cdot,\cdot \rangle_*$ gives 
\begin{align*}
f_0g_0+f_1g_1+f_2g_2+f_3g_3&=\langle f,g \rangle_*\\
-f_0g_1+f_1g_0-f_2g_3+f_3g_2&=\langle f,i*g \rangle_*\\
-f_0g_2+f_1g_3+f_2g_0-f_3g_1&=\langle f, j*g\rangle_* \\
-f_0g_3-f_1g_2+f_2g_1+f_3g_0&=\langle f,k*g \rangle_*
\end{align*}
gives the conclusion.
\end{proof}

Now we turn to the issue of giving an intrinsic description of one-slice preserving functions; the quest for this result is originated from the need to characterize a function in this class without explicitly indicating the slice it preserves.

\begin{teo}\label{one-slice}
Let $f\in\So(\Omega)\setminus\So_{\R}(\Omega)$. The following are equivalent:
\begin{enumerate}[(i)]
\item $f$ is one-slice preserving;
\item $f_{v}^{s}$ has a square root $\sqrt{f_{v}^{s}}\in\So_{\R}(\Omega)$ and the map $\frac{f_{v}}{\sqrt{f_{v}^{s}}}$ is constant
outside the zero set of $f_{v}^{s}$;
\item the map $\Omega\cap\R\ni x\mapsto[f_{v}(x)]\in\mathbb{P}({\rm Im}\HH)$ is constant
outside the zero set of $f_{v}$.
\end{enumerate}
\end{teo}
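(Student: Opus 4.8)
The plan is to prove the cycle $(i)\Rightarrow(ii)\Rightarrow(iii)\Rightarrow(i)$, keeping as the central computational fact the formula $f_v^s = f_1^2+f_2^2+f_3^2$ in the notation of Proposition~\ref{GMP} (with respect to an orthonormal basis $1,i,j,k$), and the observation that $f\in\So_J(\Omega)$ means exactly that $f_v$ takes values in the line $\C_J\cap{\rm Im}\HH=\R J$ on $\Omega\cap\C_J$, hence by the Representation Formula $f_v = \rho\, J$ for a single slice-preserving function $\rho\in\So_\R(\Omega)$ and a fixed $J\in\SF$.

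For $(i)\Rightarrow(ii)$: suppose $f\in\So_J(\Omega)$. By Proposition~\ref{GMP} applied to a basis containing $J$, we get $f_v = \rho J$ with $\rho\in\So_\R(\Omega)$, $\rho\not\equiv 0$ (since $f\notin\So_\R(\Omega)$). Then $f_v^s = (\rho J)*(\rho J)^c = \rho^2\,J J^c = \rho^2$, since $|J|=1$; using the remark that $(\rho_1 a_1)*(\rho_2 a_2)=\rho_1\rho_2 a_1a_2$. Hence $\sqrt{f_v^s}=\rho$ (up to sign) lies in $\So_\R(\Omega)$, and outside the zero set of $f_v^s$ (equivalently of $\rho$) one has $f_v/\sqrt{f_v^s} = \rho J/\rho = J$, a constant. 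For $(ii)\Rightarrow(iii)$: away from the zero set of $f_v$ — which contains the zero set of $f_v^s$ restricted to $\R$, and off the real axis $f_v^s$ and $f_v$ have the same zero sphere as noted in the excerpt — the class $[f_v(x)] = [\,(f_v/\sqrt{f_v^s})(x)\,]$ in $\Po({\rm Im}\HH)$ equals the class of a constant quaternion, hence is constant; one must check that on $\Omega\cap\R$ the zero set of $f_v^s$ and that of $f_v$ coincide, which follows since for real $x$, $f_v^s(x) = |f_v(x)|^2$.

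The implication expected to require the most care is $(iii)\Rightarrow(i)$. The hypothesis is that $x\mapsto[f_v(x)]$ is constant, say equal to $[J]$ for some $J\in\SF$, on $(\Omega\cap\R)\setminus Z(f_v)$. I would first argue that, since $\Omega\cap\R$ is a nonempty open subset of $\R$ (a union of open intervals) and $f_v$ is real-analytic there with isolated zeros (its zero set being closed with empty interior), the complement of $Z(f_v)$ in $\Omega\cap\R$ is dense and connected on each component, so the constancy of $[f_v(x)]$ propagates across the (removable) zeros: writing $f_v\!\restriction_{\Omega\cap\R} = \rho_1 i+\rho_2 j+\rho_3 k$ with $\rho_\ell$ real-analytic, the hypothesis says $(\rho_1,\rho_2,\rho_3)(x)$ is a real scalar multiple of a fixed unit vector $(a_1,a_2,a_3)$ for all non-zero $x$, hence by continuity for all $x$; so $f_v(x) = \lambda(x) J$ on $\Omega\cap\R$ with $J=a_1i+a_2j+a_3k\in\SF$ and $\lambda$ real-analytic real-valued. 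Then the regular extension of $\lambda\restriction_{\Omega\cap\C_J}$ — which is just the slice-preserving extension $\rho\in\So_\R(\Omega)$ determined by $\lambda$ on the real axis — satisfies: $\rho J\in\So(\Omega)$ agrees with $f_v$ on $\Omega\cap\R$, and by the identity principle for slice regular functions (two slice regular functions on a symmetric slice domain agreeing on $\Omega\cap\R$ coincide) we get $f_v = \rho J$ globally. Finally $f = f_0 + f_v = f_0 + \rho J$ with $f_0,\rho\in\So_\R(\Omega)$, so on $\C_J$ we have $f(\Omega\cap\C_J)\subset\C_J$ because $f_0$ preserves every slice and $\rho J$ takes values in $\R + \R J = \C_J$; thus $f\in\So_J(\Omega)$, i.e. $f$ is one-slice preserving. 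The one genuine subtlety to handle carefully is the passage from "constant projective class off the zeros" to "$f_v$ is a real-analytic scalar times a fixed imaginary unit on all of $\Omega\cap\R$", i.e. that the sign of $\lambda$ is allowed to change at zeros and that this causes no obstruction — the projective statement is exactly what makes this work, whereas the affine statement in $(ii)$ needs the square-root hypothesis to pin down the sign globally.
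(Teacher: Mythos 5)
Your proposal is correct, and the first two implications coincide with the paper's argument: for \textit{(i)}$\Rightarrow$\textit{(ii)} both write $f_v=\rho J$ via Proposition~\ref{GMP} and compute $f_v^s=\rho^2$, and for \textit{(ii)}$\Rightarrow$\textit{(iii)} both use $[f_v(x)]=\bigl[(f_v/\sqrt{f_v^s})(x)\bigr]$ (your explicit check that $f_v^s(x)=|f_v(x)|^2$ for real $x$, so the two zero sets agree on $\Omega\cap\R$, is a welcome precision the paper leaves implicit). Where you genuinely diverge is \textit{(iii)}$\Rightarrow$\textit{(i)}: the paper restricts to a small ball $B_0$ centred at a real point where $f_v$ does not vanish, invokes Corollary~3.2 of \cite{A-dF} to extract a square root of $f_v^s$ there, and then uses that $\SF\to\Po({\rm Im}\HH)$ is a double cover over the connected set $B_0\cap\R$ to produce the fixed $I_0$, before finishing with Proposition~\ref{GMP} and the Identity Principle. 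You instead stay entirely on the real axis, where the components $f_1,f_2,f_3$ are real-valued, recover the scalar as $\lambda=a_1f_1+a_2f_2+a_3f_3$ (which is already the restriction of a global element of $\So_\R(\Omega)$, so no extension issue arises), propagate the proportionality across the discrete real zero set by continuity, and conclude by one application of the Identity Principle. Your route avoids the external square-root result and the covering-space argument, at the price of using the full global constancy of $[f_v]$ on $(\Omega\cap\R)\setminus Z(f_v)$ rather than only its constancy near one point; both arguments are sound, and your closing remark about why the projective formulation absorbs sign changes of $\lambda$ correctly identifies the point where the two hypotheses \textit{(ii)} and \textit{(iii)} differ.
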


\begin{proof}
\textit{(i)$\Rightarrow$(ii)} 
Denote by $\C_{I_{0}}$ the slice preserved by $f$. By Proposition~\ref{GMP}, the function  $f$ can be written as $f=f_{0}+f_{1}I_{0}$
and hence $f_{v}=f_{1}I_{0}$. This immediately entails $f_{v}^{s}=f_{1}^{2}$, so $f_{v}^{s}$ has a square root in $\So_{\R}(\Omega)$  and $\frac{f_{v}}{\sqrt{f_{v}^{s}}}\equiv I_{0}$ or $\frac{f_{v}}{\sqrt{f_{v}^{s}}}\equiv -I_{0}$ outside the zero set of $f_{v}^{s}$ according to the fact that the choosen square root is equal to $f_1$ or to $-f_1$.

\textit{(ii)$\Rightarrow$(iii)} 
It is enough to observe that $[f_{v}(x)]=\left[\left(\frac{f_{v}}{\sqrt{f_{v}^{s}}}\right)(x)\right]\in\mathbb{P}({\rm Im}\HH)$ for any $x\in\Omega\cap\R$ outside
the zero set of $f_{v}$.

\textit{(iii)$\Rightarrow$(i)} Since $f\notin\So_{\R}(\Omega)$, the function $f_{v}$ is not identically zero.
Now choose $x_{0}\in\Omega\cap\R$ such that $f_{v}(x_{0})\neq 0$ and $B_{0}$ a ball of center $x_{0}$
contained in $\Omega$ on which $f_{v}$ is never-vanishing;
thus the restriction of the function $f_{v}^{s}$ to $B_{0}$ has no zeroes. By Corollary 3.2 in~\cite{A-dF} there exists a square root
$\sqrt{f_{v}^{s}}\in\So_{\R}(B_{0})$. Since the map $\Omega\cap\R\ni x\mapsto[f_{v}(x)]\in\mathbb{P}({\rm Im}\HH)$ is constant
on $B_{0}\cap\R$, then also  the map $\Omega\cap\R\ni x\mapsto\left[\frac{f_{v}}{\sqrt{f_{v}^{s}}}(x)\right]$ is constant on
$B_{0}\cap\R$. Thus, since $B_{0}\cap\R$ is connected and $\SF\to \mathbb{P}({\rm Im}\HH)$ is a double-covering, there exists $I_{0}\in\SF$ such that $f_{v}(x)=\sqrt{f_{v}^{s}}(x)I_{0}$ 
for any $x\in B_{0}\cap\R$. Thanks to the Identity Principle this equality holds on $B_{0}$.
Now choose a basis $\{I_{0},J_{0},K_{0}\}$ of ${\rm Im}\HH$ and write $f_{v}=f_{1}I_{0}+f_{2}J_{0}+f_{3}K_{0}$,
with $f_{1},f_{2},f_{3}\in\So_{\R}(\Omega)$.
The uniqueness given by Proposition~\ref{GMP} shows that $f_{2}\equiv f_{3}\equiv 0$ on $B_{0}$ 
and a further application of the Identity Principle gives at last $f\in\So_{I_{0}}(\Omega)$.
\end{proof}

Applying Corollary 3.2 in~\cite{A-dF} and the above theorem to the case when  $\Omega_{I_{0}}=\Omega\cap\C_{I_{0}}$ is simply connected, i.e.: $\pi_{1}(\Omega_{I_{0}})=0$ for some, and then any, $I_{0}\in\SF$, gives
the following

\begin{cor}
Let $f\in\So(\Omega)\setminus\So_{\R}(\Omega)$ with $\Omega_{I_{0}}=\Omega\cap\C_{I_{0}}$ simply connected.
The following are equivalent:
\begin{enumerate}[(i)]
\item $f$ is one-slice preserving;
\item the zero set of $f_{v}$ does not contain non real isolated zeroes of odd multiplicity and $\frac{f_{v}}{\sqrt{f_{v}^{s}}}$ is constant
outside the zero set of $f_{v}^{s}$.
\end{enumerate}
\end{cor}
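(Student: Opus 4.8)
The plan is to deduce the statement directly from Theorem~\ref{one-slice}. Conditions (ii) of Theorem~\ref{one-slice} and (ii) of the Corollary differ only in that the clause ``$f_v^s$ has a square root in $\So_\R(\Omega)$'' is replaced by ``the zero set of $f_v$ does not contain non-real isolated zeroes of odd multiplicity'' (the clause on $\frac{f_v}{\sqrt{f_v^s}}$ being constant outside the zero set of $f_v^s$ is the same in both). So it suffices to prove that, when $\Omega_{I_0}$ is simply connected, these two clauses are equivalent. Note that $f\notin\So_\R(\Omega)$ forces $f_v\not\equiv 0$, hence $f_v^s\not\equiv 0$, so no degeneracy occurs.

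The core of the argument is a slice-by-slice computation of the vanishing order of $f_v^s$, read off from the zero structure of $f_v$. Fix $I_0\in\SF$ and set $g:=f_v^s|_{\C_{I_0}}$. Since $f_v^s\in\So_\R(\Omega)$, the zeros of $g$ are exactly the real zeros of $f_v$ together with the points $\alpha\pm I_0\beta$ (with $\beta\neq 0$) such that $f_v$ vanishes somewhere on $\SF_{\alpha+I\beta}$. At a real zero $x$ of $f_v$ of multiplicity $k$ I would write $f_v(q)=(q-x)^kh(q)$ with $h(x)\neq 0$; since $(q-x)^k\in\So_\R(\Omega)$ and $\So(\Omega)$ is an integral domain, applying the $(\cdot)_0$ decomposition forces $h$ to be purely vectorial, so in an orthonormal basis $h=h_1I_0+h_2J_0+h_3K_0$ and $f_v^s=(q-x)^{2k}(h_1^2+h_2^2+h_3^2)$ with $h_1(x)^2+h_2(x)^2+h_3(x)^2>0$: the order of $g$ at $x$ is the even number $2k$. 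At a sphere $\SF_{\alpha+I\beta}$ on which $f_v$ vanishes I would apply the factorization~\eqref{factorzero} to $f_v$ \emph{itself}, say $f_v=[(q-\alpha)^2+\beta^2]^m(q-p_1)*\cdots*(q-p_n)*\tilde g$ with $2m$ the spherical multiplicity, $n$ the isolated multiplicity, and $\tilde g$ not vanishing on the sphere; using that the symmetrized of a $*$-product is the product of the symmetrized, that $\rho^s=\rho^2$ for $\rho\in\So_\R(\Omega)$, and that $(q-p)^s=(q-\alpha)^2+\beta^2$ for $p\in\SF_{\alpha+I\beta}$, this gives $f_v^s=[(q-\alpha)^2+\beta^2]^{2m+n}\tilde g^s$ with $\tilde g^s$ not vanishing on the sphere. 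As $[(q-\alpha)^2+\beta^2]$ restricts on $\C_{I_0}$ to $(z-\alpha-I_0\beta)(z-\alpha+I_0\beta)$, the order of $g$ at $\alpha\pm I_0\beta$ equals $2m+n$, which is even precisely when $n$ is even. Summing up: every zero of $g$ has even order if and only if $f_v$ has no non-real isolated zero of odd multiplicity.

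To finish I would invoke Corollary~3.2 of~\cite{A-dF}: since $\Omega_{I_0}$ is simply connected, $f_v^s$ admits a square root in $\So_\R(\Omega)$ exactly when every zero of $g$ has even order. For a self-contained check: even order of all zeros of $g$ on the simply connected domain $\Omega_{I_0}$ produces a holomorphic square root $\psi$ of $g$; from $g(\bar z)=\overline{g(z)}$ one gets $\psi(\bar z)=\epsilon\,\overline{\psi(z)}$ with a constant $\epsilon\in\{\pm1\}$, and $\epsilon=-1$ is impossible because it would force $g\leq 0$ on $\Omega\cap\R\neq\varnothing$, whereas $g$ is a sum of squares there and not identically zero; hence $\psi$ is real on the reals and its regular extension lies in $\So_\R(\Omega)$. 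Conversely, a square root $\rho\in\So_\R(\Omega)$ of $f_v^s$ makes $2m+n=2\,\mathrm{ord}_{\alpha+I_0\beta}(\rho|_{\C_{I_0}})$ even for every sphere on which $f_v$ vanishes, hence every $n$ even. This gives the required equivalence and, through Theorem~\ref{one-slice}, the Corollary.

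I expect the only genuinely delicate point to be the bookkeeping between the spherical and the isolated contributions of a zero of $f_v$ — in particular making sure the count is carried out for $f_v$ rather than for $f$, since these can differ (the isolated part of a zero of $f$ may become purely spherical for $f_v$) — together with the verification that a square root obtained on a single slice actually descends to a slice preserving function on all of $\Omega$, which is exactly what the reality/positivity step (or the precise form of~\cite[Cor.~3.2]{A-dF}) takes care of.
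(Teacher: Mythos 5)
Your proposal is correct and follows the same route as the paper, which derives the corollary in one line by combining Theorem~\ref{one-slice} with Corollary~3.2 of~\cite{A-dF} (the statement that, over a domain with simply connected slices, $f_v^s$ admits a square root in $\So_\R(\Omega)$ exactly when $f_v$ has no non-real isolated zeroes of odd multiplicity). You simply supply in addition a self-contained verification of that imported ingredient — the order count $2k$ at real zeroes and $2m+n$ at spherical ones, plus the reality/positivity argument for the square root — and this bookkeeping, including the care to factor $f_v$ rather than $f$, is accurate.
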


\section{Factorization theorems}

In this section we present a factorization theorem ``\`a la Weierstrass'' for slice regular functions without non-real isolated zeroes which generalizes the result obtained by Gentili and Vignozzi in~\cite{G-V}. This result allows us to give necessary and sufficient conditions, in terms of values taken on the real line, on a    
given function $\mu\in\So_{\R}(\Omega)\setminus\{0\}$ in order that there exists a one-slice preserving function $h$ such that $h^{s}=\mu$. Since we will need to define the analogous of the Weierstrass primary factors given on the unit disc in $\C$, for the present section the domain 
$\Omega\subset\HH$ is such that $\Omega_{I_{0}}=\Omega\cap\C_{I_{0}}$ is simply connected, i.e.: $\pi_{1}(\Omega_{I_{0}})=0$ for some, and then any, $I_{0}\in\SF$.

For each $m\in\N$ we introduce an analogous of the Weierstrass primary factor
$E_{m}:\HH\rightarrow\HH$ given by
$$
E_{m}(q)=(1-q)\exp\left(q+\frac{q^{2}}{2}+\dots+\frac{q^{m}}{m}\right).
$$
For any $f\in\So(\Omega)$ we consider the regular function defined on $\Omega$
$$
(E_{m}\circledast f)(q) =(1-f(q))*\exp_{*}\left(f(q)+\frac{f(q)^{*2}}{2}+\dots+\frac{(f(q))^{*m}}{m}\right).
$$
%
%
%
where $\exp_{*}$ is the quaternionic $*$-exponential introduced in~\cite{C-S-St-2} and studied in~\cite{A-dF}. 
The previous \textit{composition} operator denoted by $\circledast$, is the one 
introduced in~\cite[Definition 4.1]{R-W} (since $E_{m}$ has real coefficients the two definitions appearing in the
cited paper coincide). In particular if $f\in\So_{I_{0}}(\Omega)$, the factor $(E_{m}\circledast f)$ coincides with the regular extension of the
function $z\mapsto E_{m} (f(z))$ defined on $\C_{I_{0}}$ and it belongs to $\So_{I_{0}}(\Omega)$, too.
Notice that the $E_{m}$'s are slice preserving regular functions and, in analogy with what happens in the complex case (see~\cite{R}, Chapter XV), we have that, for any $m$ and for any $|q|\leq 1$
\begin{equation}\label{ineqE}
|1-E_{m}(q)|\leq |q|^{m+1}.
\end{equation}
Consider now a one-slice preserving function $f\in\So_{J}(\Omega)$, such that $f(\Omega)\subset\overline{\mathbb{B}}$, then
\begin{equation}\label{ineqEm}
|1-(E_{m}\circledast f)(q)|\leq \max_{\SF_{q}}|f(q)|^{m+1},
\end{equation}
for any $q\in\Omega$.
Indeed, since $1-E_{m}\circledast f$ belongs to $\So_{J}(\Omega)$, thanks to Proposition 2.6 in~\cite{deF-G-S} for any $q=\alpha+I\beta\in\Omega$ we have that
\begin{align*}
|1-(E_{m}\circledast f)(q)| \leq&  \max\{|1-(E_{m}(f(\alpha+J\beta))|,|1-(E_{m}(f(\alpha-J\beta))|\}\\
\leq &\max\{|f(\alpha+J\beta)|^{m+1},|f(\alpha-J\beta)|^{m+1}\}=\max_{\SF_{q}}|f(q)|^{m+1}
\end{align*}
In particular, if $f$ is not a constant of modulus $1$, then $\max_{\SF_{q}}|f(q)|$ is strictly less than $1$.


\begin{prop}\label{nostroweierstrass}
Given $f\in\So(\Omega)$ with no non-real isolated zeroes, suppose that $m\in\N$ is the multiplicity of $f$ at $0$,
then there exist $\mathcal{R},\mathcal{S}\in\So_{\R}(\Omega)$, $h\in\So(\Omega)$ with $h$ never vanishing, such that
\begin{equation*}
f(q)=q^{m}\mathcal{R}(q)\mathcal{S}(q)h(q),
\end{equation*}
where 
\begin{itemize}
\item $\mathcal{R}$ vanishes exactly at the real non-vanishing zeroes of $f$;
\item $\mathcal{S}$ vanishes exactly at the spherical zeroes of $f$.
\end{itemize}
\end{prop}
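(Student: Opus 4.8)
The plan is to remove the zeroes of $f$ one slice at a time by means of Weierstrass-type products and then pass to regular extensions. Fix $I_0\in\SF$ and work on the simply connected slice $\Omega_{I_0}=\Omega\cap\C_{I_0}$. The crucial consequence of the hypothesis that $f$ has no non-real isolated zeroes is that the zero divisor of the holomorphic function $f_{I_0}$ on $\Omega_{I_0}$ is completely transparent: a zero of order $m$ at $0$; a zero of order $k_n$ at each real non-vanishing zero $x_n$ of $f$, where $k_n$ is the isolated multiplicity; and, for every spherical zero $\SF_{\alpha_n+I\beta_n}$ of $f$ of spherical multiplicity $2m_n$, a zero of order exactly $m_n$ at each of $\alpha_n\pm I_0\beta_n$. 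Indeed, by~\eqref{factorzero} the absence of non-real isolated zeroes forces the $*$-linear factors to disappear, so $f=[(q-\alpha_n)^2+\beta_n^2]^{m_n}g_n$ with $g_n\in\So(\Omega)$ never vanishing on $\SF_{\alpha_n+I\beta_n}$, and restricting to $\C_{I_0}$ yields the stated order $m_n$.

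First I would construct $\mathcal R$ and $\mathcal S$. Using the primary factors $E_m$ (composed, via $\circledast$ or as ordinary Weierstrass products after a Riemann map, with M\"obius maps that place the zeroes, and using the estimates~\eqref{ineqE}--\eqref{ineqEm} for convergence) I build a holomorphic $r$ on $\Omega_{I_0}$ whose zero divisor is exactly $\{x_n\}$ with orders $k_n$, and a holomorphic $s$ on $\Omega_{I_0}$ whose zero divisor is exactly $\{\alpha_n\pm I_0\beta_n\}$ with orders $m_n$. Both divisors are invariant under $z\mapsto\bar z$ with equal orders at conjugate points, so each such function $g$ can be symmetrized: $\tilde g(z):=\overline{g(\bar z)}$ has the same zero divisor, hence $g/\tilde g=e^{\phi}$ for a holomorphic $\phi$ on the simply connected $\Omega_{I_0}$; evaluation at a real base point gives $\overline{\phi(\bar z)}=-\phi(z)$, whence $G:=e^{-\phi/2}g$ satisfies $\overline{G(\bar z)}=G(z)$ with unchanged zero divisor. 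Let $\mathcal R,\mathcal S\in\So(\Omega)$ be the regular extensions of the symmetrized $r,s$; the reflection symmetry forces $\mathcal R,\mathcal S\in\So_\R(\Omega)$, and since their restrictions to $\C_{I_0}$ vanish only at real points, resp.\ only at conjugate pairs $\alpha_n\pm I_0\beta_n$ to order $m_n$, their zero sets in $\HH$ are precisely the real non-vanishing zeroes of $f$, resp.\ the spherical zeroes $\SF_{\alpha_n+I\beta_n}$ (with spherical multiplicity $2m_n$).

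Next I set $\psi:=q^m\mathcal R\mathcal S\in\So_\R(\Omega)$, so that $\psi_{I_0}$ has by construction exactly the same zero divisor on $\Omega_{I_0}$ as $f_{I_0}$. I then define $h_{I_0}:=\psi_{I_0}^{-1}f_{I_0}$, the $\C_{I_0}$-valued factor being placed on the \emph{left} so that holomorphicity is preserved (right multiplication by a $\C_{I_0}$-valued holomorphic function would not be holomorphic). This is holomorphic off the zeroes of $\psi_{I_0}$, and since $f_{I_0}$ and $\psi_{I_0}$ have equal order of vanishing at each such point the singularities are removable, so $h_{I_0}$ extends to a nowhere-vanishing holomorphic function on all of $\Omega_{I_0}$; let $h\in\So(\Omega)$ be its regular extension. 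Because $\psi$ is slice preserving, $(\psi*h)_{I_0}=\psi_{I_0}h_{I_0}=f_{I_0}$ on $\Omega_{I_0}$, so by the Identity Principle $f=\psi*h=q^m\mathcal R\mathcal S\,h$ on $\Omega$, the last equality because a slice preserving function $*$-multiplies as the pointwise product.

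It remains to see that $h$ never vanishes. If $h(q_0)=0$ with $q_0\in\C_{I_0}$ this contradicts $h_{I_0}$ being nowhere zero, so let $q_0=\alpha_0+I\beta_0$ with $\beta_0\neq0$. Then $f(q_0)=\psi(q_0)h(q_0)=0$; if $\SF_{q_0}$ were not a zero-sphere of $f$ then $q_0$ would be a non-real isolated zero, contradicting the hypothesis, so $\SF_{q_0}=\SF_{\alpha_n+I\beta_n}$ for some $n$. From $f=\psi*h$ and multiplicativity of the symmetrization, $f^s=\psi^s h^s=\psi^2 h^s$, hence on $\C_{I_0}$ one has $(h^s)_{I_0}=(f^s)_{I_0}/\psi_{I_0}^2$; at $\alpha_n+I_0\beta_n$ the numerator vanishes to order $2m_n$ (the spherical multiplicity) while $\psi_{I_0}^2$ vanishes to order exactly $2m_n$ (only the factor $\mathcal S_{I_0}$ contributes, to order $m_n$), so $(h^s)_{I_0}(\alpha_n+I_0\beta_n)\neq0$; but $h(q_0)=0$ forces $h^s$ to vanish on all of $\SF_{q_0}$, in particular at $\alpha_n+I_0\beta_n$ --- a contradiction. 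The delicate points are the construction in the second paragraph --- convergence of the products on a general simply connected slice and, above all, arranging the reflection symmetry so that the regular extensions land in $\So_\R(\Omega)$ --- together with the careful bookkeeping of vanishing orders (which is exactly where the no-non-real-isolated-zeroes hypothesis enters) and the attention to left versus right multiplication when dividing $\HH$-valued by $\C_{I_0}$-valued holomorphic functions.
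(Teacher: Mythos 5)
Your proof is correct, but it follows a genuinely different route from the paper's. The paper treats the factors $\mathcal R,\mathcal S$ as globally defined quaternionic infinite products: it reduces to $\Omega=\HH$ (where it invokes the Weierstrass factorization of Gentili--Vignozzi) or, via the approximation result Corollary~3.7 of~\cite{Gal-S}, to $\Omega=\B$, where it builds $\mathcal R=\prod(E_n\circledast M_{b_n})$ and $\mathcal S=\prod(E_n\circledast M_{c_n})^s$ from quaternionic primary factors and regular M\"obius maps, using~\eqref{ineqEm} for convergence, and then extracts the unit $h$ by arguing as in Theorems~4.31--4.32 of~\cite{G-S-St}. You instead work entirely on one simply connected slice $\Omega_{I_0}$: you read off the exact zero divisor of $f_{I_0}$ (this is precisely where the absence of non-real isolated zeroes is used), produce $r,s$ by the classical complex Weierstrass theorem, and then impose the reflection symmetry $\overline{G(\bar z)}=G(z)$ by the clean trick $g/\tilde g=e^{\phi}$, $G=e^{-\phi/2}g$ (which needs the simple connectivity assumed in this section, and the conjugation-invariance of $\Omega_{I_0}$ coming from the axial symmetry of $\Omega$), so that the regular extensions land in $\So_\R(\Omega)$; the unit $h$ is obtained by dividing on the slice and extending, and its non-vanishing off $\C_{I_0}$ follows from the order count on $f^s=\psi^2h^s$. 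What each approach buys: the paper's is shorter because it delegates to existing quaternionic machinery, but it genuinely depends on the reduction to $\B$; yours is self-contained at the level of one-variable complex analysis, works directly on any $\Omega$ with simply connected slices, and makes the bookkeeping of multiplicities (and hence the role of the hypothesis) completely explicit. Your attention to left versus right multiplication when dividing the $\HH$-valued $f_{I_0}$ by the $\C_{I_0}$-valued $\psi_{I_0}$ is exactly the right precaution, and the final symmetrization argument ruling out zeros of $h$ on the zero spheres of $f$ closes the one point that a purely slice-wise argument would otherwise leave open.
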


%

\begin{proof}
If $f$ has only a finite number of zeroes (both real and spherical), then the thesis is a direct consequence of a finite number of repeated applications of Theorem 3.36 in~\cite{G-S-St}.

Now we perform the proof in the case when both
real and spherical zeroes are infinite. The case in which one of these sets is finite is left to the reader.

We denote by  $\{b_{n}\}_{n\in\N}\subset\R\setminus\{0\}$ the sequence of the real non-vanishing zeros of $f$
and by $\{S_{n}\}_{n\in\N}$ the sequence of the spherical zeros of $f$, where all the zeros  are listed according to their multiplicities.

If $\Omega=\HH$, the statement is a particular case of the Weierstrass factorization theorem given in~\cite{G-V}.
That is, there exists a never vanishing function $h\in\So(\HH)$, and for all $n\in\N$, there exist $c_{n}\in S_{n}$, such that
\begin{equation*}
f(q)=q^{m}\mathcal{R}(q)\mathcal{S}(q)h(q),
\end{equation*}
where
\begin{equation*}
\mathcal{R}(q)=\prod E_{n}\left(\frac{q}{b_{n}}\right),\quad
\mathcal{S}(q)=\prod \left(E_{n}\circledast(qc_{n}^{-1})\right)^{s}.
\end{equation*}
%
%
%
%
If $\Omega\neq\HH$, Corollary 3.7 in~\cite{Gal-S} allows us to restrict to the case in
which $\Omega=\B $. 

For any $q_{0}\in\B\setminus \{0\}$, we set 
$$
M_{q_{0}}(q):=\left(q_{0}-\frac{q_{0}}{|q_{0}|}\right)*\left(q-\frac{q_{0}}{|q_{0}|}\right)^{-*}=\left(q-\frac{q_{0}}{|q_{0}|}\right)^{-*}*\left(q_{0}-\frac{q_{0}}{|q_{0}|}\right),
$$
which is the regular M\"obius transformation defined by Stoppato in~\cite{S}. 
Now we choose $c_{n}\in S_{n}$; thanks to Theorem 3.12 in~\cite{G-S-St}, we have that any closed ball centered at the origin with radius strictly 
less than $1$ contains only a finite number of real and spherical zeroes, so that 
$\lim|b_{n}|=1$ and $\lim|c_{n}|=1$.
Thus $\left|\frac{b_{n}}{|b_{n}|}-b_{n}\right|=|b_{n}|\left|\frac{1}{|b_{n}|}-1\right| \rightarrow 0$ and $\left|\frac{c_{n}}{|c_{n}|}-c_{n}\right|=|c_{n}|\left|\frac{1}{|c_{n}|}-1\right|\rightarrow 0$ for $n\to \infty$.

Now consider the following factors
\begin{equation*}
\mathcal{R}(q)=\prod \left(E_{n}\circledast M_{b_{n}}\right)(q),\quad
\mathcal{S}(q)=\prod \left(E_{n}\circledast M_{c_{n}}\right)^{s}(q)
\end{equation*}

According to~\eqref{ineqEm} and the estimates contained in Theorem 15.11 in~\cite{R}, each factor is well defined on $\B$ and belongs to $\So_{\R}(\B)$, moreover the product $q^{m}\mathcal{R}(q)\mathcal{S}(q)$ has the same zeroes of $f$.
At this point, arguing as in Theorems 4.31 and 4.32 in~\cite{G-S-St}, we can find 
a never-vanishing function $h\in\So(\B)$ such that
$$
f(q)=q^{m}\mathcal{R}(q)\mathcal{S}(q)h(q)
$$
and this concludes the proof.
\end{proof}

We now give necessary and sufficient conditions for a slice preserving function to be the symmetrized
of a one-slice preserving function.

\begin{teo}\label{radicesimmetrizzata}
Given $\mu\in\So_{\R}(\Omega)\setminus\{0\}$, there exists $h\in\So_{I_{0}}(\Omega)$ such that $h^{s}=\mu$ if and only if $\mu\geq 0$ on $\Omega\cap\R$ and the order of the real zeros of $\mu$ is even.
\end{teo}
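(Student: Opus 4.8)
The plan is to prove both implications by reducing, via the factorization results already established, to the analysis of the elementary factors $q^m$, the real Weierstrass primary factors $E_n(M_{b_n})$, and the spherical factors $(E_n\circledast M_{c_n})^s$, keeping careful track of sign conditions on $\Omega\cap\R$.

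\textbf{Necessity.} Suppose $h\in\So_{I_0}(\Omega)$ satisfies $h^s=\mu$. Writing $h=h_0+h_1I_0$ with $h_0,h_1\in\So_\R(\Omega)$ (Proposition~\ref{GMP}), we get $\mu=h^s=h_0^2+h_1^2$. For any real $x\in\Omega\cap\R$ the functions $h_0,h_1$ take real values, so $\mu(x)=h_0(x)^2+h_1(x)^2\geq 0$; this gives $\mu\geq 0$ on $\Omega\cap\R$. For the order of the real zeros, fix $x_0\in\Omega\cap\R$ with $\mu(x_0)=0$; then $h_0(x_0)=h_1(x_0)=0$, so $h(x_0)=0$, and $h$ has some isolated multiplicity $k$ at $x_0$ in the sense defined above: $h(q)=(q-x_0)^k\ell(q)$ with $\ell\in\So_{I_0}(\Omega)$ (one checks $\ell$ is again $\C_{I_0}$-preserving since $(q-x_0)^k$ is slice preserving) and $\ell(x_0)\neq 0$. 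Then $\mu=h^s=(q-x_0)^{2k}\ell^s$, and $\ell^s(x_0)=|\ell(x_0)|^2\neq 0$, so the order of $\mu$ at $x_0$ is exactly $2k$, which is even.

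\textbf{Sufficiency.} Assume $\mu\geq 0$ on $\Omega\cap\R$ and all real zeros of $\mu$ have even order. First apply Proposition~\ref{nostroweierstrass} (note $\mu$ has no non-real isolated zeros, being slice preserving) to write $\mu(q)=q^{m}\mathcal R(q)\mathcal S(q)k(q)$ with $\mathcal R,\mathcal S,k\in\So_\R(\Omega)$, $k$ never vanishing. Since the real zeros of $\mu$ have even order, $m$ is even, say $m=2m'$, and $\mathcal R$ (which vanishes exactly at the real nonzero zeros of $\mu$, each to the order it has in $\mu$) has only even-order real zeros; hence in the product $\mathcal R(q)=\prod E_n(M_{b_n}(q))$ the factors can be grouped in pairs with equal $b_n$, i.e. $\mathcal R=\mathcal R_1^2$ for some $\mathcal R_1\in\So_\R(\Omega)$. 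The spherical factor $\mathcal S(q)=\prod (E_n\circledast M_{c_n})^s(q)$ is already visibly a symmetrized: $\mathcal S=\sigma^s$ where $\sigma(q)=\prod(E_n\circledast M_{c_n})(q)\in\So_{I_0}(\Omega)$ (a $\C_{I_0}$-preserving function by the remark on $\circledast$ following the definition of $E_m\circledast f$), provided convergence is handled as in Proposition~\ref{nostroweierstrass}. Finally $q^{m}\mathcal R(q)\mathcal R_1(q)^{-2}$ — rather, $q^{2m'}\mathcal R_1^2 = (q^{m'}\mathcal R_1)^2$ — is a square in $\So_\R(\Omega)$, and $k$ is a never-vanishing slice preserving function with $k>0$ on $\Omega\cap\R$ (because $\mu/(q^m\mathcal R\mathcal S)>0$ there, the other factors being nonnegative on the reals), so by Theorem~\ref{radicesimmetrizzata}'s own hypothesis applied in the trivial never-vanishing case — more precisely by Corollary 3.2 in~\cite{A-dF} on a domain with $\Omega_{I_0}$ simply connected — $k$ admits a square root $\sqrt k\in\So_\R(\Omega)$. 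Setting $h=q^{m'}\mathcal R_1\,\sqrt k\,*\,\sigma\in\So_{I_0}(\Omega)$ and using that $\rho^s=\rho^2$ for $\rho\in\So_\R(\Omega)$ and that $s$ is multiplicative for the $*$-product, we obtain $h^s=q^{2m'}\mathcal R_1^2\,k\,\sigma^s=q^m\mathcal R\,\mathcal S\,k=\mu$.

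\textbf{Main obstacle.} The delicate points are: (a) verifying that $\mathcal R$, which a priori is an infinite $*$-product of the $E_n(M_{b_n})$, can genuinely be written as the square of a convergent product in $\So_\R(\Omega)$ — this requires pairing the primary factors \emph{before} passing to the limit and invoking the convergence estimates from~\eqref{ineqEm} and Theorem 15.11 in~\cite{R} for the half-rate product; and (b) extracting the square root $\sqrt k$ of the never-vanishing slice preserving factor, which is exactly where the simple connectedness assumption $\pi_1(\Omega_{I_0})=0$ (in force throughout this section) and Corollary 3.2 of~\cite{A-dF} are used. The sign bookkeeping on $\Omega\cap\R$ — ensuring $k>0$ there rather than merely $k\neq 0$ — follows from the positivity of $\mu$ together with the nonnegativity of each of $q^m$ (as $m$ even), $\mathcal R$ ($=\mathcal R_1^2$), and $\mathcal S$ ($=\sigma^s\geq 0$) on the real line.
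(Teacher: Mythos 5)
Your proposal is correct and follows essentially the same route as the paper: the Weierstrass factorization with the real and spherical zeros listed at half their multiplicity, the observation that $\mathcal S$ is the symmetrized of the $\C_{I_{0}}$-preserving product $\prod_{*}\left(E_{n}\circledast M_{c_{n}}\right)$ (with all $c_{n}$ chosen in the same slice $\C_{I_{0}}$), and the extraction of a square root of the never-vanishing, real-positive remainder via the results of~\cite{A-dF}. The only step to phrase more carefully is the identity $\mathcal R=\mathcal R_{1}^{2}$: pairing two consecutive primary factors $E_{n}(M_{b})E_{n+1}(M_{b})$ does not literally produce a square since $E_{n}\neq E_{n+1}$, so --- exactly as the paper does --- one should build $\mathcal R_{1}$ directly as the convergent product over the half-multiplicity list and absorb the resulting never-vanishing discrepancy (positive on $\Omega\cap\R$) into the factor $k$, which your sign bookkeeping already accommodates.
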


Notice that the statement is independent from $I_{0}\in\SF$. Indeed, if
there exist $I_{0}\in\SF$ and $h=h_{0}+h_{1}I_{0}\in\So_{I_{0}}(\Omega)$ such that $h^{s}=\mu$, then for any $J_{0}\in\SF$ the function $\tilde{h}=h_{0}+h_{1}J_{0}\in\So_{J_{0}}(\Omega)$ satisfies $\tilde{h}^{s}=\mu$.

\begin{proof}
As usual we write $h=h_{0}+h_{1}I_{0}$ with $I_{0}\in\SF$ and $h_{0},h_{1}\in\So_{\R}(\Omega)$; then the equality $h^{s}=\mu$ becomes $\mu=h_{0}^{2}+h_{1}^{2}$.

The condition $\mu\geq 0$ on $\Omega\cap\R$ is trivially necessary.
If $\mu(x_{0})=0$ with $x_{0}\in\Omega\cap\R$ then $h(x_{0})=0$,
 so the slice preserving function $(q-x_{0})$ divides $h$ and hence $(q-x_{0})^{2}$ divides $h^{s}=\mu$ and
 the necessity of the second condition is also proved.

In order to prove the sufficiency of the above stated conditions,
we denote by $2m$ the multiplicity of $q=0$ as a zero of $\mu$,
by $\{b_{n}\}$ the real non-vanishing zeroes repeated accordingly to half their multiplicity,
by $\{S_{n}\}$ the sequence of spherical zeroes  repeated accordingly to half their multiplicity and by $c_{n}$ the element of $S_{n}\cap\C_{I_{0}}^{+}$. 

Thanks to Proposition~\ref{nostroweierstrass} it is possible to factorize $\mu$ 
as follows
$$\mu(q)=q^{2m}\mathcal{R}^{2}(q)\mathcal{S}(q)\nu(q)$$ 
where 
\begin{itemize}
\item $\mathcal{R}^{2}$ vanishes exactly at the real non-vanishing zeroes of $f$,
\item $\mathcal{S}$ vanishes exactly at the spherical zeroes of $f$,
\end{itemize}
both with the appropriate multiplicities and $\nu\in\So_{\R}(\Omega)$ never vanishing.

If $\Omega=\HH$ we have that
\begin{equation*}
\mathcal{R}(q)=\prod E_{n}\left(\frac{q}{b_{n}}\right),\quad
\mathcal{S}(q)=\prod \left(E_{n}\circledast(qc_{n}^{-1})\right)^{s}.
\end{equation*}
Since we chose $c_{n}$ all lying in the same $\C_{I_{0}}$, then we can write
$$
\mathcal{S}(q)=\prod_{*} \left(E_{n}\circledast(qc_{n}^{-1})\right)*\prod_{*} \left(E_{n}\circledast(qc_{n}^{-1})\right)^{c}.
$$

Thanks to Proposition 3.1 in~\cite{A-dF} there exists a square root $\sigma\in\So_{\R}(\HH)$ of $\nu$ and hence the
function 
$$
h(q)=q^{m}\mathcal{R}(q)\sigma(q)\tilde{\mathcal{S}}(q)
$$
where,
$$
\tilde{\mathcal{S}}(q)=\prod_{*} \left(E_{n}\circledast(qc_{n}^{-1})\right)
$$
belongs to $\So_{I_{0}}(\HH)$ and is such that $h^{s}=\mu$.

If $\Omega\neq\HH$, again Corollary 3.7 in~\cite{Gal-S} allows us to restrict to the case in
which $\Omega=\B $. In this case we have that
\begin{equation*}
\mathcal{R}(q)=\prod \left(E_{n}\circledast M_{b_{n}}\right)(q),\quad
\mathcal{S}(q)=\prod \left(E_{n}\circledast M_{c_{n}}\right)^{s}(q)
\end{equation*}
Again, since we chose $c_{n}$ all lying in the same $\C_{I_{0}}$, we then have
$$
\mathcal{S}(q)=\prod_{*} \left(E_{n}\circledast M_{c_{n}}\right)*\prod_{*} \left(E_{n}\circledast M_{c_{n}}\right)^{c}
$$
and the existence of a square root $\sigma\in\So_{\R}(\B)$ of $\nu$ 
allows us to conclude with the same argument as above.
%
\end{proof}

\begin{remark}
Notice that, given $\mu\in\So_{\R}(\Omega)\setminus\{0\}$, if there exists $\hat h\in\So(\Omega)$ such that $\hat h^{s}=\mu$, then trivially
$\mu\geq 0$ on $\Omega\cap\R$ and the order of the real zeros of $\mu$ is even.
Thus the previous result shows that the following conditions are equivalent:
\begin{itemize}
\item there exists $\hat h\in\So_{I_{0}}(\Omega)$ such that $\hat h^{s}=\mu$,
\item there exists $k\in\So(\Omega)$ such that $k^{s}=\mu$.
\end{itemize}
\end{remark}

\begin{example}
Consider $\mu:\HH\to\HH$ given by $\mu(q)=q^{2}+1$. On the real line $\mu$ 
is always strictly positive and it can be written as $\mu=h^{s}$, where
$h(q)=q+I_{0}$, for any $I_{0}\in\SF$. Nonetheless we can also write
$\mu=\hat h^{s}$, where $\hat h(q)=\cos(q)+\sin(q)i+q\cos(q)j+q\sin(q)k$ and
thanks to Theorem~\ref{one-slice}
 it is not difficult to show 
that $\hat h$ preserves no slice.
\end{example}

\section{Sum and $*$-product}

Let $f,h:\Omega\rightarrow \mathbb{H}$ be two slice regular functions such that 
$f$ is $\mathbb{C}_{I_{0}}$-preserving and $h$ is $\mathbb{C}_{J_{0}}$-preserving for some $I_0,J_0\in\SF$. We want to understand
when their sum and $*$-product is a $\mathbb{C}_{K_{0}}$-preserving regular function,
for a suitable $K_{0}\in\mathbb{S}$. If $I_{0}=\pm J_{0}$ then the question is trivial,
so in this section we suppose that $I_{0}\neq\pm J_{0}$; for the same reason we assume that $f$ and $h$ are not slice-preserving functions.

\begin{prop}
Let $f,h:\Omega\rightarrow \mathbb{H}$ be two slice regular functions such that $f=f_{0}+f_{1}I_{0}$ is $\mathbb{C}_{I_{0}}$-preserving and $h=h_{0}+h_{1}J_{0}$ is $\mathbb{C}_{J_{0}}$-preserving with 
$f_{0},f_{1},h_{0},h_{1}$ slice preserving functions. Then there exists $K_0\in\SF$ such that $f+h$ is $\mathbb{C}_{K_{0}}$-preserving if and only if  there exist $a , b \in\R\setminus\{0\}$ such that $K_0=a  I_0+ b  J_0$ and 
$ b  f_1-a  h_1\equiv0$.
\end{prop}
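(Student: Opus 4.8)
The strategy is to reduce the statement about the function $f+h$ to a pointwise statement on the real line, where everything becomes a question about vectors in ${\rm Im}\,\HH$, and then invoke the intrinsic characterization of one-slice preserving functions from Theorem~\ref{one-slice}. First I would compute the vectorial part of $f+h$ with respect to a fixed orthonormal basis. Since $f_0,f_1,h_0,h_1\in\So_\R(\Omega)$, we have $(f+h)_0 = f_0+h_0$ and $(f+h)_v = f_1 I_0 + h_1 J_0$, so the vectorial part is an honest $\So_\R(\Omega)$-linear combination of the two fixed imaginary units $I_0$ and $J_0$. Because $I_0\neq\pm J_0$, the vectors $I_0, J_0$ span a $2$-dimensional real subspace $V\subset{\rm Im}\,\HH$, and $(f+h)_v(x)\in V$ for every $x\in\Omega\cap\R$.

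For the ``if'' direction, suppose $b f_1 - a h_1\equiv 0$ with $a,b\in\R\setminus\{0\}$ and $K_0 = aI_0+bJ_0$. Then on $\Omega\cap\R$ we can write $(f+h)_v(x) = f_1(x)I_0 + h_1(x)J_0$; using $h_1 = (b/a)^{-1}\cdot$\,(something) — more precisely, the relation $b f_1 = a h_1$ forces $f_1 I_0 + h_1 J_0$ to be a real scalar multiple of $a I_0 + b J_0 = K_0$ at every real point (when $f_1(x)\neq 0$; when $f_1(x)=0$ then also $h_1(x)=0$ and $(f+h)_v(x)=0$). Hence the projectivized map $x\mapsto [(f+h)_v(x)]\in\Po({\rm Im}\,\HH)$ is constant (equal to $[K_0]$) outside the zero set of $(f+h)_v$, and if $f+h\notin\So_\R(\Omega)$ we conclude by the equivalence (iii)$\Rightarrow$(i) of Theorem~\ref{one-slice} that $f+h$ is one-slice preserving, in fact $\C_{K_0}$-preserving after normalizing $K_0$ to have modulus $1$ (a point worth a remark, since $K_0=aI_0+bJ_0$ need not be a unit). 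The degenerate case $f+h\in\So_\R(\Omega)$, i.e. $(f+h)_v\equiv 0$, would force $f_1\equiv h_1\equiv 0$ by the uniqueness in Proposition~\ref{GMP} and the independence of $I_0,J_0$, contradicting that $f,h$ are not slice preserving.

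For the ``only if'' direction, assume $f+h$ is $\C_{K_0}$-preserving for some $K_0\in\SF$. By Proposition~\ref{GMP} applied with a basis extending $K_0$, the vectorial part $(f+h)_v$ takes values in $\R K_0$ at every real point; but we already know $(f+h)_v(x) = f_1(x)I_0+h_1(x)J_0 \in V = \R I_0 \oplus \R J_0$. Since $f,h$ are not slice preserving, $f_1\not\equiv 0$ and $h_1\not\equiv 0$, so there is a real point where $(f+h)_v$ is a nonzero vector lying in both $\R K_0$ and $V$; hence $K_0\in V$, so $K_0 = aI_0 + bJ_0$ for some $a,b\in\R$. It remains to see $a,b\neq 0$ and $bf_1 - ah_1\equiv 0$: the condition that $f_1(x)I_0 + h_1(x)J_0$ be proportional to $aI_0+bJ_0$ for all real $x$ translates, using linear independence of $I_0,J_0$, exactly into the $2\times 2$ determinant vanishing, i.e. $b f_1(x) - a h_1(x)=0$ for all $x\in\Omega\cap\R$; by the Identity Principle this gives $bf_1 - ah_1\equiv 0$ on $\Omega$. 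If $a=0$ then $bf_1\equiv 0$ with $b\neq 0$ forces $f_1\equiv 0$, contradicting $f$ not slice preserving; similarly $b\neq 0$.

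The main obstacle I anticipate is bookkeeping at the zeros of the vectorial part: Theorem~\ref{one-slice}(iii) only asserts constancy of $[(f+h)_v(x)]$ \emph{outside} the zero set of $(f+h)_v$, so I must be careful that the hypothesis $bf_1 - ah_1\equiv 0$ genuinely pins down the projective class everywhere it is defined, and conversely that ``$\C_{K_0}$-preserving'' lets me recover the relation on a dense-enough subset of $\Omega\cap\R$ to apply the Identity Principle. A secondary subtlety is the normalization $K_0=aI_0+bJ_0$ versus requiring $K_0\in\SF$; I would address this by noting one may always rescale $(a,b)$ so that $|aI_0+bJ_0|=1$, which does not affect the condition $bf_1-ah_1\equiv 0$ up to the same positive scalar, and state this cleanly to match the phrasing of the proposition.
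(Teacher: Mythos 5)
Your proof is correct and rests on the same key ingredient as the paper's, namely the uniqueness of the $\So_{\R}(\Omega)$-decomposition in Proposition~\ref{GMP} applied to $(f+h)_v=f_1I_0+h_1J_0$: the paper compares coefficients globally in the basis $\{I_0,J_0,L_0\}$ of ${\rm Im}\,\mathbb H$, where you argue pointwise on $\Omega\cap\R$ and then invoke the Identity Principle, which amounts to the same thing. The only real detour is your use of Theorem~\ref{one-slice} for sufficiency: once $bf_1-ah_1\equiv0$ gives $f+h=(f_0+h_0)+(f_1/a)K_0$, Proposition~\ref{GMP} already yields $f+h\in\So_{K_0}(\Omega)$ directly, with no need for the projectivization criterion or the accompanying normalization discussion.
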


\begin{proof}
The sufficiency of the condition is trivial. In order to prove its necessity, notice that,
as $I_0$ and $J_0$ are linearly independent, they can be completed to a basis $I_0,J_0,L_0$ of ${\rm Im}\HH$ and we can write $K_0$ as $a  I_0+ b  J_0+\varepsilon L_0$ for suitable $a ,  b , \varepsilon\in\R$.
Then $f+h$ is equal to $f_0+h_0+f_1I_0+h_1J_0$. Now $f+h$ is $\C_{K_0}$-preserving if and only if there exist two slice preserving functions  $m_0,m_1$ such that 
$$f_0+h_0+f_1I_0+h_1J_0=f+h=m_0+m_1K_0=m_0+a  m_1I_0+ b  m_1 J_0+\varepsilon m_1 L_0.$$  
The bijectivity guaranteed by Proposition~\ref{GMP} entails that 
$f_1=a  m_1$, $h_1= b  m_1$ and $\varepsilon m_1=0$.
Since neither $f$ nor $h$ are slice preserving, then 
the function $m_1$ cannot be identically zero and $a $ and $ b $ are both different from zero. This implies that $\varepsilon=0$, $K_0=a  I_0+ b  J_0$ and
$ b  f_1-a  h_1\equiv 0$. 
\end{proof}


We start the discussion  on the $*$-product of two functions with a preliminary remark that sets the question
in the case their $*$-product belongs to $\So_{\R}(\Omega)$.

\begin{remark}
Let $f,h\in\So(\Omega)\setminus\{0\}$. Then $f*h$ belongs to $\So_{\R}(\Omega)$ if
and only if also $h*f$ belongs to $\So_{\R}(\Omega)$. In fact  if $f*h\in\So_{\R}(\Omega)$, then
$f^{s}h^{s}=(f^{c}*f)*(h*h^{c})=f^{c}*(f*h)*h^{c}=(f*h)*f^{c}*h^{c}=(f*h)*(h*f)^{c}$.
As both $f^{s}h^{s}$ and $f*h$ belong to $\So_{\R}(\Omega)$, then $(h*f)^{c}$
also lies in $\So_{\R}(\Omega)$ and therefore $h*f\in\So_{\R}(\Omega)$.

Moreover $f*h$ belongs to $\So_{\R}(\Omega)$ if
and only if $f,h^{c}$ are linearly dependent over $\So_{\R}(\Omega)$. In fact if there exist
$\alpha,\beta\in\So_{\R}(\Omega)\setminus\{0\}$, such that $\alpha f+\beta h^{c}\equiv 0$,
then $\alpha f*h+\beta h^{s}\equiv 0$. This implies that $\alpha f*h$ belongs to $\So_{\R}(\Omega)$
and therefore $f*h\in\So_{\R}(\Omega)$. Vice versa if $f*h\in\So_{\R}(\Omega)$, then $(f*h)*h^{c}=f*(h*h^{c})=h^{s}f$. As both $f*h$ and $h^{s}$ are not identically zero  because $\So(\Omega)$ is an integral domain, we are done.
\end{remark}

Now we turn to the non-trivial case. We first characterize, giving an explicit parametric description, the sets of regular functions
which preserve two different slices whose $*$-product also preserves a slice.

\begin{teo}\label{thm1}
Let $f,h:\Omega\rightarrow \mathbb{H}$ be two slice regular functions such that $f=f_{0}+f_{1}I_{0}$ is $\mathbb{C}_{I_{0}}$-preserving and $h=h_{0}+h_{1}J_{0}$ is $\mathbb{C}_{J_{0}}$-preserving with 
$f_{0},f_{1},h_{0},h_{1}$ slice preserving functions and $I_{0}$, $J_{0}$ linearly independent. Then there exists $K_0\in\SF$ such that $f*h$ is $\mathbb{C}_{K_{0}}$-preserving if and only if  there exist $a , b \in\R$, $\varepsilon\in\R\setminus\{0\}$ such that $K_0=a  I_0+ b  J_0+\varepsilon I_0\wedge J_0$,  
$f=f_1\left(\frac  b \varepsilon +I_0\right)$ and $h=h_1\left(\frac a \varepsilon +J_0\right)$.
\end{teo}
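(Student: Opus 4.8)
The sufficiency direction is a routine verification, so the heart of the matter is the necessity. Assume $f*h$ is $\C_{K_0}$-preserving for some $K_0\in\SF$. First I would set up coordinates: since $I_0,J_0$ are linearly independent imaginary units, the triple $\{I_0,J_0,I_0\wedge J_0\}$ is a basis of ${\rm Im}\,\HH$ (not orthonormal in general, but linearly independent), so I may write $K_0=a I_0+b J_0+\varepsilon\, I_0\wedge J_0$ for unique $a,b,\varepsilon\in\R$. Since $f\in\So_{I_0}(\Omega)$ and $h\in\So_{J_0}(\Omega)$ are slice preserving in their respective slices, Definition~\ref{starprod} gives an explicit formula for $f*h$ once $J_0$ and $I_0\wedge J_0$ are expressed in the chosen basis. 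The cleanest route is to compute $f*h=f_0h_0-\langle f_v,h_v\rangle_*+f_0h_v+h_0f_v+f_v\pv h_v$ with $f_v=f_1I_0$, $h_v=h_1J_0$; this yields
\[
f*h=\bigl(f_0h_0-f_1h_1\langle I_0,J_0\rangle\bigr)+f_0h_1J_0+h_0f_1I_0+f_1h_1\,(I_0\wedge J_0),
\]
using that $\langle I_0,J_0\rangle$ is a real constant and $I_0\wedge J_0$ is the vectorial part of $I_0J_0$.

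\textbf{Key step.} Now apply Proposition~\ref{GMP} with respect to the basis $\{1,I_0,J_0,I_0\wedge J_0\}$: the function $f*h$ is $\C_{K_0}$-preserving, with $K_0=a I_0+b J_0+\varepsilon(I_0\wedge J_0)$, if and only if there are slice preserving functions $m_0,m_1\in\So_\R(\Omega)$ with $f*h=m_0+m_1K_0$, i.e. the $I_0$-, $J_0$- and $(I_0\wedge J_0)$-components of $f*h$ equal $a m_1$, $b m_1$, $\varepsilon m_1$ respectively. Comparing with the expression above gives the system
\[
h_0f_1=a m_1,\qquad f_0h_1=b m_1,\qquad f_1h_1=\varepsilon m_1.
\]
Since $f,h$ are not slice preserving, $f_1\not\equiv0$ and $h_1\not\equiv0$; as $\So(\Omega)$ is an integral domain, $f_1h_1\not\equiv0$, hence the third equation forces $\varepsilon\ne0$ and $m_1=f_1h_1/\varepsilon$ (division inside the fraction field of $\So_\R(\Omega)$, or better: $\varepsilon m_1=f_1h_1$ with $\varepsilon\ne0$, so $m_1=\frac1\varepsilon f_1h_1$ already lies in $\So_\R(\Omega)$). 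Substituting into the first two equations gives $\varepsilon\, h_0 f_1=a f_1 h_1$ and $\varepsilon\, f_0 h_1=b f_1 h_1$; cancelling $f_1$ (resp. $h_1$), which is legitimate in the integral domain $\So_\R(\Omega)$, yields $\varepsilon h_0=a h_1$ and $\varepsilon f_0=b f_1$. Therefore $f_0=\frac b\varepsilon f_1$ and $h_0=\frac a\varepsilon h_1$, which is exactly $f=f_1(\frac b\varepsilon+I_0)$ and $h=h_1(\frac a\varepsilon+J_0)$.

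\textbf{Sufficiency and wrap-up.} Conversely, if $f=f_1(\frac b\varepsilon+I_0)$ and $h=h_1(\frac a\varepsilon+J_0)$ with $\varepsilon\ne0$, substitute back into the boxed formula for $f*h$: one checks directly that $f*h=m_0+m_1K_0$ with $m_1=\frac1\varepsilon f_1h_1$ and $m_0$ the scalar part, so $f*h\in\So_{K_0}(\Omega)$ by Proposition~\ref{GMP}; one should also record that $|K_0|^2=a^2+b^2+\varepsilon^2|I_0\wedge J_0|^2>0$ so $K_0/|K_0|\in\SF$ and $\C_{K_0}$ makes sense — note being $\C_{K_0}$-preserving depends only on the line $\R K_0$, so we may normalize. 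I expect the only genuine subtlety to be bookkeeping: keeping track of $\langle I_0,J_0\rangle$ (which is harmless, it only affects $m_0$, not the vectorial components that matter) and making sure all cancellations of $f_1$, $h_1$, $\varepsilon$ are justified by the integral-domain property rather than by pointwise division. No deep input beyond Proposition~\ref{GMP}, Definition~\ref{starprod} and the integral-domain remark is needed.
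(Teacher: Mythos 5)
Your proposal is correct and follows essentially the same route as the paper: the same expansion of $f*h$ in the basis $\{1,I_0,J_0,I_0\wedge J_0\}$, the same application of Proposition~\ref{GMP} to obtain the system $h_0f_1=a m_1$, $f_0h_1=b m_1$, $f_1h_1=\varepsilon m_1$, and the same integral-domain cancellations forcing $\varepsilon\neq0$, $f_0=\frac{b}{\varepsilon}f_1$ and $h_0=\frac{a}{\varepsilon}h_1$. The only blemish is the parenthetical formula $|K_0|^2=a^2+b^2+\varepsilon^2|I_0\wedge J_0|^2$, which omits the cross term $2ab\langle I_0,J_0\rangle$ when $I_0\not\perp J_0$; this is harmless since positivity of $|K_0|$ is all you use.
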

\begin{proof}
As above, the sufficiency of the condition is obtained by direct computation.
In order to prove its necessity, first of all we compute 
$$f*h=(f_0+f_1I_0)*(h_0+h_1J_0)=f_0h_0+h_0f_1I_0+f_0h_1J_0+f_1h_1I_0J_0.$$
 As 
$I_0J_0=-\langle I_0,J_0\rangle+I_0\wedge J_0$ we have that 
$$
f*h=f_0h_0-f_1h_1\langle I_0,J_0\rangle+h_0f_1I_0+f_0h_1J_0+f_1h_1I_0\wedge J_0.
$$
Then $f*h$ is $\C_{K_0}$-preserving for some $K_0\in\SF$ if and only if there exist two slice preserving functions  $m_0,m_1$ and $a , b , \varepsilon\in\R$ such that 
$$f_0h_0-f_1h_1\langle I_0,J_0\rangle+h_0f_1I_0+f_0h_1J_0+f_1h_1I_0\wedge J_0=m_0+m_1K_0=m_0+a  m_1I_0+ b  m_1 J_0+\varepsilon m_1 I_0\wedge J_0;$$  
this because $I_0,J_0,I_0\wedge J_0$ is a basis of  ${\rm Im}\HH$.
Again, the bijectivity guaranteed by Proposition~\ref{GMP} entails that 
$$
\begin{cases}
f_0h_0-f_1h_1\langle I_0,J_0\rangle=m_0,\\ f_1h_0=a  m_1,\\ h_1f_0= b  m_1, \\f_1h_1= \varepsilon m_1.
\end{cases}
$$
Since neither $f$ nor $h$ are slice preserving, the functions $f_1$ and $h_1$ are not identically zero. As $\So(\Omega)$ is an integral domain, then 
the function $m_1$ cannot be identically zero and $\varepsilon$ has to be different from zero. 
Thus  $m_1=\frac{f_1h_1}\varepsilon$ and therefore 
$f_0h_1= b  \frac{f_1h_1}\varepsilon$ and $h_0f_1=a \frac{f_1h_1}\varepsilon$.
These equalities can be written as $h_1\left(f_0-\frac b \varepsilon f_1\right)\equiv0$ and $f_1\left(h_0-\frac a \varepsilon h_1\right)\equiv0$. Again, since $f_{1}$ and $h_{1}$ are not identically zero, we obtain
$f_0=\frac b \varepsilon f_1$ and $h_0=\frac a \varepsilon h_1$ that is 
$f=f_1\left(\frac b \varepsilon+I_0\right)$ and $h=h_1\left(\frac a \varepsilon+J_0\right)$.
\end{proof}

\begin{remark}
We underline that, chosen a basis $I_0,J_0,K_0\in \SF$, the above result locates two real directions in the planes $\C_{I_0}$ and  $\C_{J_0}$, respectively generated by $\frac b \varepsilon+I_0$ and by $\frac a \varepsilon+J_0$, which ``give the angles'' of the rotations needed to obtain $f$ and $h$ from the slice preserving functions $f_1$ and $h_1$. That is, for any $p_0\in\HH$ real multiple of $\frac b \varepsilon+I_0$  and $q_0\in\HH$ real multiple of $\frac a \varepsilon+J_0$ and for any $f_1,h_1\in\So_\R(\Omega)$ the $*$-product of the functions $f=f_1p_0\in \So_{I_0}(\Omega)$ and $h=h_1q_0\in \So_{J_0}(\Omega)$ belongs to $\So_{K_0}(\Omega)$ and vice versa. 
\end{remark}

We now pass to another result related to the $*$-product of two regular functions.
In this case, given functions $f$ and $g$ we write explicit ``bilinear'' 
equations which characterize the fact that the $*$-product $f*g$ preserves a given slice.

\begin{prop}\label{prodstarfI}
Given $I_{0}\in\SF$ and  $f,g\in\So(\Omega)$, the following are equivalent
\begin{enumerate}[(i)]
\item  the $*$-product $f*g$ belongs to $\So_{I_{0}}(\Omega)$; 
\item  $\langle f, M_{0}*g^c\rangle_{*}\equiv 0$, for all $M_{0}\in\SF$ orthogonal to $I_0$;
\item  $\langle f^{c}, g*M_{0}\rangle_{*}\equiv 0$, for all $M_{0}\in\SF$ orthogonal to $I_0$.
\end{enumerate}
\end{prop}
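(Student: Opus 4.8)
The plan is to first reduce everything to an explicit computation using the decomposition of Proposition~\ref{GMP} relative to a suitably chosen orthonormal basis, and then to observe that the conditions (ii) and (iii) simply encode the vanishing of the two ``off-$\C_{I_0}$'' components of $f*g$. Concretely, I would complete $I_0$ to an orthonormal basis $I_0,J_0,K_0$ of $\mathrm{Im}\,\HH$ and write $f=f_0+f_1I_0+f_2J_0+f_3K_0$, $g=g_0+g_1I_0+g_2J_0+g_3K_0$ with all coefficients in $\So_\R(\Omega)$. By the last assertion of Proposition~\ref{GMP}, $f*g\in\So_{I_0}(\Omega)$ if and only if the $J_0$- and $K_0$-components of $f*g$ vanish identically. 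From the formula in Definition~\ref{starprod} these two components are
\begin{align*}
(f*g)_{J_0}&=f_0g_2+f_2g_0+f_3g_1-f_1g_3,\\
(f*g)_{K_0}&=f_0g_3+f_3g_0+f_1g_2-f_2g_1,
\end{align*}
so (i) is equivalent to the simultaneous vanishing of these two expressions.

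Next I would compute the right-hand sides of (ii) and (iii). Since every $M_0\in\SF$ orthogonal to $I_0$ is a real linear combination $\lambda J_0+\mu K_0$ with $\lambda^2+\mu^2=1$, and since $\langle f,\cdot\rangle_*$ is $\R$-linear in its second slot, the family of conditions in (ii) is equivalent to the pair obtained by taking $M_0=J_0$ and $M_0=K_0$; likewise for (iii). Using $\langle f,g\rangle_*=f_0g_0+f_1g_1+f_2g_2+f_3g_3$ together with the explicit multiplication rules for $J_0*g^c$ and $K_0*g^c$ (equivalently $g^c=g_0-g_1I_0-g_2J_0-g_3K_0$ and the quaternionic products $J_0I_0=-K_0$, $J_0J_0=-1$, $J_0K_0=I_0$, etc.), a direct expansion gives
\begin{align*}
\langle f, J_0*g^c\rangle_*&=-\bigl(f_0g_2+f_2g_0+f_3g_1-f_1g_3\bigr)=-(f*g)_{J_0},\\
\langle f, K_0*g^c\rangle_*&=-\bigl(f_0g_3+f_3g_0+f_1g_2-f_2g_1\bigr)=-(f*g)_{K_0},
\end{align*}
and an entirely analogous computation, using $\langle f^c,h\rangle_*=(f^c*h^c)_0=(h*f)_0$ and the products $g*J_0$, $g*K_0$, yields $\langle f^c,g*J_0\rangle_*=(f*g)_{J_0}$ and $\langle f^c,g*K_0\rangle_*=(f*g)_{K_0}$ up to sign. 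Hence (ii) $\Leftrightarrow$ (i) $\Leftrightarrow$ (iii). For a more conceptual route one could instead note that $\langle f,M_0*g^c\rangle_* = (f*(M_0*g^c)^c)_0 = (f*g*M_0^c)_0 = -(f*g*M_0)_0$, and that $(\varphi*M_0)_0$ for $\varphi\in\So(\Omega)$ equals (minus) the $M_0$-component $\varphi_{M_0}$ of $\varphi$; so requiring this to vanish for all $M_0\perp I_0$ is exactly the statement that $f*g$ has no components along the plane $I_0^\perp$, i.e. $f*g\in\So_{I_0}(\Omega)$. The equivalence with (iii) follows similarly from $\langle f^c,g*M_0\rangle_*=(f^c*(g*M_0)^c)_0=(f^c*M_0^c*g^c)_0=-( f^c*M_0*g^c)_0$ and the analogous component identity, or simply by conjugating: $h*f\in\So_{I_0}(\Omega)$ iff $f*h\in\So_{I_0}(\Omega)$ since $(f*g)^c=g^c*f^c$ and $\So_{I_0}$ is closed under conjugation.

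I do not expect a serious obstacle here; the only thing requiring a little care is the bookkeeping of signs in the quaternionic products when expanding $\langle f,M_0*g^c\rangle_*$ and $\langle f^c,g*M_0\rangle_*$, and the (trivial) reduction from ``all $M_0\perp I_0$'' to the two basis vectors $J_0,K_0$ via $\R$-bilinearity of $\langle\cdot,\cdot\rangle_*$. The main structural point to get right is simply the clean identification, via Proposition~\ref{GMP}, of ``$f*g\in\So_{I_0}(\Omega)$'' with ``the $J_0$- and $K_0$-components of $f*g$ vanish'', after which everything is a short computation.
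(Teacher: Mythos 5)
Your proposal is correct and follows essentially the same route as the paper: the paper proves (i)$\Leftrightarrow$(ii) by applying the identity~\eqref{relazione-hermitiana} to $\mathcal{H}_*(f,g^c)=f*g$, which is exactly your identification of $\langle f,J_0*g^c\rangle_*$ and $\langle f,K_0*g^c\rangle_*$ with the $J_0$- and $K_0$-components of $f*g$ (plus the same $\R$-linearity reduction from all $M_0\perp I_0$ to the basis pair), and (ii)$\Leftrightarrow$(iii) via the same conjugation identity $\langle f,M_0*g^c\rangle_*=-\langle f^c,g*M_0\rangle_*$. One harmless slip: a direct expansion gives $\langle f,J_0*g^c\rangle_*=+(f*g)_{J_0}$ rather than $-(f*g)_{J_0}$ (consistent with~\eqref{relazione-hermitiana}), but since only the vanishing matters the argument is unaffected.
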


\begin{proof}
$\textit{(i)}\Leftrightarrow \textit{(ii)}$
Choose an orthonormal basis $\{I_0,J_0,K_0\}$ of $\rm{Im}\HH$ and 
notice that, thanks to Definition~\ref{definizione-hermitiana}, condition $\textit{(i)}$ is equivalent to 
$\mathcal H_*(f,g^c)\in \So_{I_{0}}(\Omega)$. 
Now Equality~\eqref{relazione-hermitiana} ensures that 
$\mathcal H_*(f,g^c)\in \So_{I_{0}}(\Omega)$ if and only if 
$\langle f, J_0*g^c\rangle_*\equiv 0$ and 
$\langle f,K_0*g^c \rangle_* \equiv 0$ which, by linearity on $\R$, holds if and only if 
$\langle f, M_{0}*g^c\rangle_{*}\equiv 0$ for all $M_{0}\in\SF$ orthogonal to $I_0$.

$\textit{(ii)}\Leftrightarrow \textit{(iii)}$
Since $\langle f, M_{0}*g^c\rangle_{*}=\langle f^c, (M_{0}*g^c)^c\rangle_{*}=\langle f^c, g* (-M_{0})\rangle_{*}
=-\langle f^c, g * M_{0}\rangle_{*}$ the equivalence of the two conditions is immediately proven.
\end{proof}

\section{Conjugates}

We first establish a convention for following reference. This will simplify the presentation
of the forecoming results.

\begin{notation}\label{notation}
If $I_{0}, M_{0}\in\SF$ are linearly independent, throughout the rest of the paper 
we will keep the following notation.
We denote by $I_0,J_0,K_0$ the orthonormal basis of ${\rm Im}\HH$ such that $K_0$ is a positive multiple of $I_0\wedge M_0$ and $J_0=K_0I_0$. 
This gives that, up to the substitution of $I_{0},J_{0},K_{0}$ with $I_{0},-J_{0},-K_{0}$, we have  $M_0=a  I_0+ b  J_0$ for some $ b >0$ with $a ^2+ b ^2=1$.

If $I_{0}$ and $M_{0}$ are not orthogonal and we are interested only in the slices $\C_{I_{0}}, \C_{M_{0}}$, up to substituting $I_{0},J_{0},K_{0}$ with $-I_{0},J_{0},-K_{0}$
we can also suppose that $a>0$.
\end{notation}

In this section we study the behaviour of conjugates $h*f*h^{c}$ of slice regular maps, in the cases when either
the conjugator $h$ or the conjugated $f$ is one-slice preserving. In order to obtain more information on this
sort of functions, first of all we compute $h*f*h^c$ by means of the decomposition in real and vectorial part as introduced in
Section 1. Setting $f=f_0+f_v$ and $h=h_0+h_v$ we have $h^c=h_0-h_v$ and we can state the following lemma.
\begin{lemma}
Given $f,h\in\So(\Omega)$ it holds
\begin{equation}\label{coniugio}
h*f*h^c=[h*f*h^c]_0+\langle h_v,f_v \rangle_* h_v+h_0^2f_v+2h_0 h_v\pv f_v-(h_v\pv f_v)\pv h_v.
\end{equation}
\end{lemma}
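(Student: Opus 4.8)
The plan is to expand $h * f * h^c$ directly using the decomposition $f = f_0 + f_v$, $h = h_0 + h_v$, $h^c = h_0 - h_v$, together with the quaternion-style formula
$$
u * w = u_0 w_0 - \langle u_v, w_v \rangle_* + u_0 w_v + w_0 u_v + u_v \pv w_v
$$
established in Section 1, applying it twice. The key observation that keeps the computation manageable is that $f_0$ and $h_0$ are slice preserving, hence central for the $*$-product, so they may be pulled out freely; consequently the only genuinely noncommutative part of the calculation lives in the product of the vectorial parts.

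First I would compute the intermediate product $f * h^c = (f_0 + f_v) * (h_0 - h_v)$, getting
$$
f * h^c = f_0 h_0 + \langle f_v, h_v \rangle_* - f_0 h_v + h_0 f_v - f_v \pv h_v .
$$
Then I would left-$*$-multiply by $h = h_0 + h_v$. Since $h_0$ is central, the $h_0$-times-everything terms are routine; the work is in $h_v * (f * h^c)$. Splitting $f * h^c$ into its scalar part $s := f_0 h_0 + \langle f_v, h_v \rangle_*$ and its vectorial part $w := -f_0 h_v + h_0 f_v - f_v \pv h_v$, one has $h_v * (s + w) = s h_v - \langle h_v, w \rangle_* + h_v \pv w$. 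Expanding $\langle h_v, w \rangle_*$ and $h_v \pv w$ term by term, and using the elementary identities $\langle h_v, h_v \pv f_v\rangle_* = 0$ (so the $f_v \pv h_v$ contribution to the scalar part drops), $\langle h_v, f_v\rangle_* h_v$ reassembles the stated coefficient, while $h_v \pv (h_0 f_v) = h_0\, h_v \pv f_v$ and $h_v \pv (f_v \pv h_v) = (h_v \pv f_v)\pv h_v$ (up to sign, since $\pv$ is anti-symmetric) produce the remaining vectorial terms. Collecting everything and absorbing all scalar contributions into the symbol $[h*f*h^c]_0$ yields exactly~\eqref{coniugio}: namely $h_v * w$ contributes $\langle h_v, f_v\rangle_* h_v$ (from $-\langle h_v, h_0 f_v\rangle_*$? no — from $h_v \pv$ of the scalar-free part) and the cross terms $2 h_0\, h_v \pv f_v$ appear by combining $h_0 h_v * f_v$ from $h_0 \cdot w$-part with $h_v \pv (h_0 f_v)$.

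The main obstacle is purely bookkeeping: keeping track of signs when the anti-symmetric bracket $\pv$ is nested, in particular getting $-(h_v \pv f_v)\pv h_v$ with the correct sign, and making sure that the several scalar contributions (those involving $\langle \cdot,\cdot\rangle_*$ of a vectorial part with a $\pv$-product, which all vanish, versus those that survive into $[h*f*h^c]_0$) are sorted correctly. Since the statement only asks for the vectorial part to be written explicitly while the scalar part is left as the undetermined symbol $[h*f*h^c]_0$, I would not belabor the scalar terms; the proof reduces to verifying that the vectorial part of $h * f * h^c$ is $\langle h_v, f_v\rangle_* h_v + h_0^2 f_v + 2 h_0\, h_v \pv f_v - (h_v \pv f_v)\pv h_v$, which follows from the two applications of the product formula and the listed identities for $\langle\cdot,\cdot\rangle_*$ and $\pv$.
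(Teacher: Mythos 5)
Your proposal is correct and follows essentially the same route as the paper: a double application of the quaternion-style product formula $u*w=u_0w_0-\langle u_v,w_v\rangle_*+u_0w_v+w_0u_v+u_v\pv w_v$, the only (immaterial) difference being that you group the product as $h*(f*h^c)$ while the paper expands $(h*f)*h^c$. The sign bookkeeping you flag does work out exactly as you describe, using the antisymmetry of $\pv$ and its bilinearity over $\So_{\R}(\Omega)$.
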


\begin{proof}
The thesis is obtained thanks to the following chain of equalities 
\begin{equation*}
\begin{split}
h*f*h^c&=(h_0+h_v)*(f_0+f_v)*(h_0-h_v)=(h_0f_0-\langle h_v,f_v \rangle_*+h_0f_v+f_0h_v+h_v\pv f_v)*(h_0-h_v)\\
&=[h*f*h^c]_0-h_0f_0h_v+\langle h_v,f_v \rangle_* h_v+h_0^2f_v+h_0f_0h_v+h_0 h_v\pv f_v -h_0 f_v\pv h_v -(h_v\pv f_v)\pv h_v\\
&=[h*f*h^c]_0+\langle h_v,f_v \rangle_* h_v+h_0^2f_v+2h_0 h_v\pv f_v-(h_v\pv f_v)\pv h_v.
\end{split}
\end{equation*}
\end{proof}

Using the previous lemma, the first result we can prove is a complete classification
of the regular functions which satisfy the equality $h*f*h^{c}=h^{c}*f*h$.

\begin{prop}
Let $f, g\in\So(\Omega)$, then  $h*f*h^{c}=h^{c}*f*h$ if and only if either
$h_{0}\equiv 0$ or $f_{v}$ and $h_{v}$ are linearly dependent over $\So_{\R}(\Omega)$.
\end{prop}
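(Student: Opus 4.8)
The plan is to use the formula~\eqref{coniugio} for $h*f*h^c$ together with the analogous formula for $h^c*f*h$, obtained by replacing $h_v$ with $-h_v$ (equivalently, swapping the roles of $h$ and $h^c$). Since $[h*f*h^c]_0 = (h*f*h^c)_0$ is a slice preserving function and the symmetrization argument in the preliminary remarks shows $[h*f*h^c]_0 = [h^c*f*h]_0$ (both equal $\langle h, f*h\rangle_*$-type scalars, or more directly one checks the $0$-part is symmetric in $h, h^c$ because it is built from $\langle\cdot,\cdot\rangle_*$ terms only), the difference $h*f*h^c - h^c*f*h$ has zero real part, and the vectorial parts give
$$
h*f*h^c - h^c*f*h = 4 h_0\, (h_v \pv f_v).
$$
Indeed: the term $\langle h_v, f_v\rangle_* h_v$ is even in $h_v$, the term $h_0^2 f_v$ is even, the term $(h_v \pv f_v)\pv h_v$ is even (quadratic in $h_v$), while $2 h_0\, h_v \pv f_v$ is odd in $h_v$; subtracting the $h_v \mapsto -h_v$ version doubles the odd term, yielding $4 h_0 (h_v \pv f_v)$.

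First I would carry out this computation carefully, recording that $h*f*h^c = h^c*f*h$ holds if and only if $h_0\,(h_v \pv f_v) \equiv 0$. Since $\So_\R(\Omega)$ is an integral domain and $\So(\Omega)$ is an integral domain (as recalled in the preliminary remarks), and since $h_0 \in \So_\R(\Omega)$, the product $h_0\,(h_v\pv f_v)$ vanishes identically if and only if either $h_0 \equiv 0$ or $h_v \pv f_v \equiv 0$.

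The remaining point, and the one requiring a small argument, is to show that $h_v \pv f_v \equiv 0$ is equivalent to $f_v$ and $h_v$ being linearly dependent over $\So_\R(\Omega)$. Writing $f_v = f_1 i + f_2 j + f_3 k$ and $h_v = h_1 i + h_2 j + h_3 k$ with coefficients in $\So_\R(\Omega)$, the expression $h_v \pv f_v$ has components the three $2\times 2$ minors $h_2 f_3 - h_3 f_2$, etc., exactly the "cross product" of the coefficient vectors over the ring $\So_\R(\Omega)$. One direction is immediate: if $\alpha f_v + \beta h_v \equiv 0$ with $\alpha,\beta \in \So_\R(\Omega)$ not both zero, say $\beta \not\equiv 0$, then $\beta(h_v \pv f_v) = -\alpha(f_v \pv f_v) = 0$ since $f_v \pv f_v = 0$, hence $h_v\pv f_v\equiv 0$ by integrality (and symmetrically if $\alpha\not\equiv0$; if $f_v\equiv 0$ or $h_v\equiv 0$ the dependence and the vanishing are both trivial). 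For the converse, assume $h_v\pv f_v \equiv 0$ and, excluding the trivial case $f_v\equiv 0$, pick a real point $x_0 \in \Omega\cap\R$ with $f_v(x_0)\neq 0$; on a neighborhood the vanishing of all minors over the field of germs (or just pointwise, by passing to the field of fractions $\mathcal M$ of the domain $\So_\R(\Omega)$) forces $h_v = \lambda f_v$ for a scalar $\lambda$ in the fraction field, and clearing denominators gives a genuine $\So_\R(\Omega)$-linear relation $\beta h_v - \alpha f_v \equiv 0$.

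The main obstacle is this last equivalence: one must argue that vanishing of the $2\times 2$ minors of the coefficient matrix over the integral domain $\So_\R(\Omega)$ implies rank $\leq 1$, i.e.\ linear dependence over the ring. The clean way is to pass to the fraction field $\Ma$ of $\So_\R(\Omega)$ where this is the standard linear algebra fact (vanishing of all $2\times 2$ minors $\iff$ rank $\leq 1$), then clear denominators to return to the ring; one only needs that $\So_\R(\Omega)$ is an integral domain, which is already recorded. Everything else is bookkeeping with formula~\eqref{coniugio}.
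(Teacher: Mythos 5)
Your proposal is correct and follows essentially the same route as the paper: both reduce the equality to $4h_0(h_v\pv f_v)\equiv 0$ via formula~\eqref{coniugio} and the observation that the real parts coincide, and then split the vanishing using that $\So(\Omega)$ is an integral domain. The only difference is that where the paper simply cites Proposition~2.8 of~\cite{A-dF} for the equivalence between $h_v\pv f_v\equiv 0$ and the linear dependence of $f_v$ and $h_v$ over $\So_{\R}(\Omega)$, you prove that equivalence directly (vanishing of the $2\times 2$ minors over the fraction field of $\So_{\R}(\Omega)$, then clearing denominators), which is a valid, self-contained substitute.
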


\begin{proof}
Since $h^{c}=h_{0}-h_{v}$, from Equation~\eqref{coniugio} we have
\begin{equation*}
h^{c}*f*h=[h^{c}*f*h]_0+\langle h_v,f_v \rangle_* h_v+h_0^2f_v-2h_0 h_v\pv f_v-(h_v\pv f_v)\pv h_v.
\end{equation*}
A straightforward computation shows that $[h*f*h^{c}]_0=h_{0}^{2}f_{0}+f_{0}\langle h_{v},h_{v}\rangle_{*}=[h^{c}*f*h]_0$, hence
$h*f*h^{c}=h^{c}*f*h$ is equivalent to
\begin{equation}\label{doppioconiugio}
4h_{0}(h_v\pv f_v)\equiv 0.
\end{equation}
As $\So(\Omega)$ is an integral domain, \eqref{doppioconiugio} is equivalent to either $h_{0}\equiv 0$
or $h_v\pv f_v\equiv 0$. Thanks to Proposition 2.8 in~\cite{A-dF} the statement follows.
\end{proof}

We carry on our investigation on the behaviour of the conjugate by showing under
which (non-trivial) conditions on $h$ the function $h*f*h^c$ is one-slice preserving when $f$ is.

\begin{teo}\label{primo-coniugio}
Let $f\in\So_{I_0}(\Omega)\setminus\So_\R(\Omega)$ and $h\in \So(\Omega)\setminus\So_{I_0}(\Omega)$. Then there exists $M_0\in\SF$ such that 
$h*f*h^c\in\So_{M_0}(\Omega)$ if and only if  
\begin{enumerate}[(i)]
\item in the case $\C_{I_{0}}\neq \C_{M_{0}}$, by means of Notation~\ref{notation}, we can find $g\in\So_{I_0}(\Omega)\setminus\{0\}$ such that  
$$h=\left(1-\frac{a \pm1}{ b }K_0\right)*g;$$
%
%
%
\item in the case $\C_{I_{0}}= \C_{M_{0}}$, there exist $J_0\in\SF$ with $I_0\perp J_0$ and  $g\in \So_{I_0}(\Omega)\setminus\{0\}$ such that 
$h=J_0*g$. 
\end{enumerate} 
\end{teo}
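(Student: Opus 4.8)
The plan is to exploit the explicit expansion \eqref{coniugio} of $h*f*h^c$ together with Theorem~\ref{one-slice}, reducing the whole question to a statement about the projectivization of the vectorial part $[h*f*h^c]_v$ along the real line. Since $f\in\So_{I_0}(\Omega)\setminus\So_\R(\Omega)$, we may write $f=f_0+f_1I_0$ with $f_1\not\equiv0$, so $f_v=f_1I_0$. Writing $h=h_0+h_v$ and substituting $f_v=f_1I_0$ into \eqref{coniugio}, one gets
\begin{equation*}
[h*f*h^c]_v=f_1\bigl(\langle h_v,I_0\rangle_* h_v+h_0^2 I_0+2h_0\,h_v\pv I_0-(h_v\pv I_0)\pv h_v\bigr).
\end{equation*}
Since $f_1$ is a nonzero slice-preserving function, by Theorem~\ref{one-slice}(iii) the function $h*f*h^c$ is one-slice preserving if and only if the bracketed vectorial-valued function — call it $w$ — has $[w(x)]\in\mathbb P({\rm Im}\HH)$ constant (off the zero set of $w$) on $\Omega\cap\R$; and the preserved slice $\C_{M_0}$ is precisely the one determined by that constant direction.

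The second step is to analyze $w$ pointwise on the real line. For a fixed $x\in\Omega\cap\R$ put $u=h_0(x)\in\R$ and $v=h_v(x)\in{\rm Im}\HH$; then $w(x)=\langle v,I_0\rangle v+u^2 I_0+2u\,(v\wedge I_0)-(v\wedge I_0)\wedge v$. Using $a\wedge(b\wedge c)=\langle a,c\rangle b-\langle a,b\rangle c$ on the last term, this simplifies to a quaternionic identity; in fact $w(x)$ is exactly the imaginary part of $(u+v)I_0(u-v)=h(x)\,I_0\,h(x)^c$, i.e. $w(x)=\mathrm{Im}\bigl(h(x)\,I_0\,h(x)^c\bigr)$. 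So the geometric content is: $h*f*h^c$ is one-slice preserving iff the direction of $\mathrm{Im}\bigl(h(x)I_0h(x)^c\bigr)$ is independent of $x\in\Omega\cap\R$ (where nonzero). Now $h(x)I_0h(x)^c=|h(x)|^2\,\bigl(h(x)/|h(x)|\bigr)I_0\bigl(h(x)/|h(x)|\bigr)^{-1}$ is $|h(x)|^2$ times a point of the $2$-sphere $\SF_{\alpha(x)+I_0\beta(x)}$-type rotation of $I_0$; constancy of its direction along $\R$ is a rigid condition that I expect to pin down $h$ up to a slice-preserving factor.

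The third step is to extract the stated form of $h$ from this constancy. The case $\C_{I_0}=\C_{M_0}$: the constant direction is $\pm I_0$, so $\mathrm{Im}(hI_0h^c)$ is everywhere a real multiple of $I_0$ on $\R$; writing $h=h_0+h_1I_0+h_2J_0+h_3K_0$ (for $J_0,K_0$ completing $I_0$) and comparing, one finds $h_0\equiv0$ and $h_2,h_3$ satisfy a relation forcing $h$ to have no real (scalar) part and to lie, up to a slice-preserving coefficient, in a single slice orthogonal to $I_0$; since $h\notin\So_{I_0}(\Omega)$ this yields $h=J_0*g$ with $g\in\So_{I_0}(\Omega)\setminus\{0\}$ and $I_0\perp J_0$. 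The case $\C_{I_0}\neq\C_{M_0}$: here I adopt Notation~\ref{notation}, so $M_0=aI_0+bJ_0$, and the requirement is that $\mathrm{Im}(hI_0h^c)$ be proportional to $aI_0+bJ_0$ along $\R$; again decomposing $h=h_0+h_1I_0+h_2J_0+h_3K_0$, the vanishing of the $J_0$- and $K_0$- mismatch gives two polynomial relations among $h_0,h_1,h_2,h_3\in\So_\R(\Omega)$, which (using that $\So_\R(\Omega)$ is an integral domain and the Identity Principle) force $h_2\equiv0$ and $h_1,h_3$ proportional to $h_0$ by the constants determined by $a,b$ — precisely $h=(1-\frac{a\pm1}{b}K_0)*g$ with $g\in\So_{I_0}(\Omega)\setminus\{0\}$. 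The converse in both cases is a direct computation: one substitutes the prescribed $h$ into \eqref{coniugio}, checks that $[h*f*h^c]_v$ is a slice-preserving multiple of the fixed imaginary unit $M_0$, and invokes Theorem~\ref{one-slice} again.

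The main obstacle I anticipate is the bookkeeping in the third step: translating ``$\mathrm{Im}(hI_0h^c)$ has constant projective direction along $\R$'' into algebraic identities in $\So_\R(\Omega)$ and then solving those identities cleanly, keeping track of the two sign choices ($a\pm1$) and of the normalization fixed by Notation~\ref{notation}. The passage from an identity valid only on $\Omega\cap\R$ to one valid on all of $\Omega$ is handled by the Identity Principle, and the integral-domain property of $\So_\R(\Omega)$ is what lets us cancel common factors such as $f_1$; the genuinely delicate point is verifying that the solution set of the real-line constancy condition is exactly the coset described, with no spurious extra solutions, which is where the assumption $h\notin\So_{I_0}(\Omega)$ (resp. the non-degeneracy $f\notin\So_\R(\Omega)$) must be used.
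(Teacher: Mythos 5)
Your reduction is sound and, after unwinding, coincides with the paper's: substituting $f_v=f_1I_0$ into \eqref{coniugio} and writing $h=h_0+h_1I_0+h_2J_0+h_3K_0$ gives
$[h*f*h^c]_v=f_1\left[(h_0^2+h_1^2-h_2^2-h_3^2)I_0+2(h_1h_2+h_0h_3)J_0+2(h_1h_3-h_0h_2)K_0\right]$,
and (whether you phrase membership in $\So_{M_0}(\Omega)$ via Proposition~\ref{GMP}, as the paper does, or via the projective constancy of Theorem~\ref{one-slice}, as you do) the condition becomes the system $h_1h_3=h_0h_2$ together with $h_0^2+h_1^2-h_2^2-h_3^2=\frac{2a}{b}(h_1h_2+h_0h_3)$ when $M_0=aI_0+bJ_0$, and $h_1h_2+h_0h_3=h_1h_3-h_0h_2=0$ when $\C_{M_0}=\C_{I_0}$. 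Your pointwise identification $[h*f*h^c]_v(x)=f_1(x)\,h(x)I_0h(x)^c$ for $x\in\Omega\cap\R$ is correct and is a pleasant geometric gloss, but it does not change the algebra to be done.

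The genuine problem is in your resolution of that algebra in the case $\C_{I_0}\neq\C_{M_0}$: you assert the two relations force $h_2\equiv0$ and $h_1,h_3$ proportional to $h_0$. This is false, and in fact contradicts the statement you are proving: expanding $h=\left(1-\frac{a\pm1}{b}K_0\right)*(g_0+g_1I_0)$ with $K_0I_0=J_0$ gives $h=g_0+g_1I_0-\frac{a\pm1}{b}g_1J_0-\frac{a\pm1}{b}g_0K_0$, so a generic solution has $h_2=-\frac{a\pm1}{b}g_1\not\equiv0$ and $h_1=g_1$ not proportional to $h_0=g_0$. Your claimed solution set (which amounts to $g\in\So_\R(\Omega)$) is a strictly smaller family, so the "only if" direction of (i) would not be established. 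The correct derivation is: assuming $h_2\not\equiv0$, multiply the first relation by $h_2^2$, use $h_1h_3=h_0h_2$ to get $(h_1^2-h_2^2)(h_2^2+h_3^2)=\frac{2a}{b}h_1h_2(h_2^2+h_3^2)$, cancel $h_2^2+h_3^2\not\equiv0$ (here $h\notin\So_{I_0}(\Omega)$ is used) in the integral domain $\So_\R(\Omega)$ to obtain the quadratic $h_1^2-\frac{2a}{b}h_1h_2-h_2^2\equiv0$, whence $h_1=\frac{a\pm1}{b}h_2$ by the Identity Principle, and then $h_3=-\frac{a\pm1}{b}h_0$ from $h_1h_3=h_0h_2$; the degenerate subcase $h_2\equiv0$ must be treated separately (it forces $h_1\equiv0$). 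So $h_2$ is proportional to $h_1$ and $h_3$ to $h_0$ — not what you wrote — and only then does $h$ factor as $\left(1-\frac{a\pm1}{b}K_0\right)*(h_0+h_1I_0)$.
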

\begin{proof}
We first consider the case in which $I_0$ and $M_0$ are linearly independent and adopt  Notation~\ref{notation}, according to the statement.

With respect to the chosen basis we can decompose $f_v$ and $h_v$ as $f_1I_0$ and $h_1I_0+h_2J_0+h_3K_0$, respectively. Since $h\notin \So_{I_0}(\Omega)$ then $h_2$ and $h_3$ are not both identically zero. 
As 
$$h_v\pv f_v=f_1h_3J_0-f_1h_2K_0\quad {\rm and}\quad(h_v\pv f_v)\pv h_v=f_1[(h_2^2+h_3^2)I_0-h_1h_2J_0-h_1h_3K_0]$$
from~\eqref{coniugio}, we have 
$$
h*f*h^c=[h*f*h^c]_0+f_1[(h_0^2+h_1^2-h_2^2-h_3^2)I_0+2(h_1h_2+h_0h_3)J_0+2(h_1h_3-h_0h_2)K_0].
$$
This function belongs to $\So_{M_0}(\Omega)$ if and only if there exists $m_1\in\So_\R(\Omega)$ such that
$$
\begin{cases}
h_0^2+h_1^2-h_2^2-h_3^2=a  m_1,\\
2(h_1h_2+h_0h_3)= b  m_1,\\
2(h_1h_3-h_0h_2)=0.
\end{cases}
$$
As $ b \neq 0$ we obtain 
\begin{equation}\label{doppia}
h_0^2+h_1^2-h_2^2-h_3^2=2\frac a  b (h_1h_2+h_0h_3) \qquad \mbox{and} \qquad h_1h_3=h_0h_2.
\end{equation}
%
If $h_2\not \equiv 0$, we multiply first equation in~\eqref{doppia} by $h_2^2$ and find, thanks to second equality in~\eqref{doppia}, 
$$
(h_1^2-h_2^2)(h_2^2+h_3^2)=2\frac a  b  h_1h_2(h_2^2+h_3^2).
$$
The facts that $h\notin \So_{I_0}(\Omega)$ and that $\Omega$ contains real points, ensure that $h_2^2+h_3^2\not\equiv0$.
Since $\So(\Omega)$ is an integral domain, we therefore have
$$h_1^2-2\frac a  b  h_1h_2-h_2^2\equiv0.$$

Now choose $x_0\in\Omega\cap\R$ such that $h_2(x_0)\neq0$; in a suitable neighborhood of $x_0$ the function $h_2$ is never-vanishing and hence we can consider the quotient $\tau=\frac{h_1}{h_2}$ which satisfies $\tau^2-2\frac a  b  \tau-1=0$.
Thus, as $a ^2+ b ^2=1$, we find $\tau=\frac{a \pm1}{ b }\neq 0$.
 So in the same neighbourhood we have that
$h_1=\frac{a \pm1}{ b }h_2\not\equiv0$ that is $h_2=-\frac{a \mp1}{ b }h_1$ since $a^{2}+b^{2}=1$. 
By the Identity Principle (see~\cite{G-S-St}, Theorem 1.12) we obtain that either $h_2\equiv-\frac{a +1}{ b }h_1$ or $h_2\equiv-\frac{a -1}{ b }h_1$.
 Using again the second equality in~\eqref{doppia} we find $h_1h_3=-h_0\frac{a \pm1}{ b }h_1$ and hence 
$h_3=-h_0\frac{a \pm1}{ b }$.

At last, we obtain 
\begin{equation*}\label{prima-formula}
\begin{split}
h&=h_0+h_1I_0-\frac{a \pm1}{ b }h_1J_0-h_0\frac{a \pm1}{ b }K_0=
h_0\left(1-\frac{a \pm1}{ b }K_0\right)
+h_1\left(I_0-\frac{a \pm1}{ b }K_0I_0\right)\\
&=
h_0\left(1-\frac{a \pm1}{ b }K_0\right)
+h_1\left(1-\frac{a \pm1}{ b }K_0\right)I_0=
\left(1-\frac{a \pm1}{ b }K_0\right)*(h_0
+h_1I_0).
\end{split}
\end{equation*}

If $h_2\equiv 0$, then $h_3\not\equiv0$, as $h\not\in\So_{I_0}(\Omega)$. 
The second equality in~\eqref{doppia} now becomes $h_{1}h_{3}\equiv 0$, which gives $h_{1}\equiv 0$. Thus
we obtain $h_{0}^{2}-h_{3}^{2}=2\frac{a}{b}h_{0}h_{3}$ and the same reasoning as above yields $h_{3}=-h_{0}\frac{a\pm 1}{b}$ that is $h=\left(1-\frac{a\pm1}{b}K_{0}\right)h_{0}$.
Thus, in both cases, setting $g=h_0+h_1I_0$, we have been able to find a function in $\So_{I_0}(\Omega)$ such that $h=\left(1-\frac{a \pm1}{ b }K_0\right)*g$.

Vice versa, suppose that $h=\left(1-\frac{a \pm1}{ b }K_0\right)*g$ for some $g\in\So_{I_0}(\Omega)$. Then 
$h*f*h^c=
\left(1-\frac{a \pm1}{ b }K_0\right)*g*f*g^c*\left(1-\frac{a \pm1}{ b }K_0\right)^c$.
Since $g*f*g^c$ belongs to $\So_{I_0}(\Omega)$ it is enough to show that for any $m=m_0+m_1I_0$ the function
$\left(1-\frac{a \pm1}{ b }K_0\right)*(m_0+m_1I_0)*\left(1-\frac{a \pm1}{ b }K_0\right)^c$ also belongs to 
$\So_{I_0}(\Omega)$. 
The vectorial part of the above function is given by $m_1\left(1-\frac{a \pm1}{ b }K_0\right)I_0\left(1+\frac{a \pm1}{ b }K_0\right)$,
so the following chain of equality gives the assertion,
\begin{align*}
\left(1-\frac{a \pm1}{ b }K_0\right)I_0\left(1+\frac{a \pm1}{ b }K_0\right)&=
\left(I_0-\frac{a \pm1}{ b }J_0\right)\left(1+\frac{a \pm1}{ b }K_0\right)\\
&=
I_0-\frac{a \pm1}{ b }J_0+\frac{a \pm1}{ b }I_0K_0-\left(\frac{a \pm1}{ b }\right)^2J_0K_0\\
&=
\left(1-\left(\frac{a \pm1}{ b }\right)^2\right)I_0-2\frac{a \pm1}{ b }J_0\\
&=
\frac{ b ^2-a ^2\mp2a  -1}{ b ^2}I_0-2\frac{a \pm1}{ b }J_0=
-2a \frac{a  \pm1}{ b ^2}I_0-2\frac{a \pm1}{ b }J_0\\
&=
-\frac{2(a \pm1)}{ b ^2}(a  I_0+ b  J_0)=-\frac{2(a \pm1)}{ b ^2}M_0.
\end{align*}

Now we turn to the case when $I_0$ and $M_0$ are linearly dependent, so that we can suppose $I_0=M_0$. Choosen any orthonormal basis $I_0,J_0,K_0$ of ${\rm Im}\HH$, we again use the expression for $h*f*h^c$ given by Equation~\eqref{coniugio} obtaining that  $h*f*h^c$ belongs to $\So_{I_0}(\Omega)$ if and only if 
$$
\begin{cases}
h_1h_2+h_0h_3=0,\\
h_1h_3-h_0h_2=0.
\end{cases}
$$
We multiply the first equation by $h_2$, the second one by $h_3$ and sum up, thus obtaining $h_1(h_2^2+h_3^2)\equiv0$. Again, the facts that $h\notin\So_{I_0}(\Omega)$ and that $\Omega$ contains real points, imply $h_1\equiv0$ and thus also $h_0$ has to be zero. This gives 
$$
h=h_2J_0+h_3K_0=h_2J_0-h_3J_0I_0=J_0*(h_2-h_3I_0);
$$
setting $g=h_2-h_3I_0$ gives the assertion. A direct inspection shows that all the functions of this form satisfy the condition $h*f*h^c\in\So_{I_0}(\Omega)$.
\end{proof}

\begin{remark}
If $I_0,J_0\in\SF$ with $I_0\perp J_0$ and $g\in\So_{I_0}(\Omega)$ then
$J_0*g=g^{c}*J_0$.
Thus the functions which appear in part $(ii)$ of the statement of the previous theorem can also be seen as products of a 
$\C_{I_0}$-preserving function for a suitable quaternion orthogonal to $I_0$.
\end{remark}

Now we turn to the ``dual'' problem, that is under which (non-trivial) conditions on $f$ the function $h*f*h^c$ is one-slice preserving when $h$ is.

\begin{teo}\label{secondo-coniugio}
Let $h=h_{0}+h_{1}I_{0}\in\So_{I_0}(\Omega)\setminus\So_\R(\Omega)$ and $f\in \So(\Omega)\setminus\So_{I_0}(\Omega)$. Then there exists $M_0\in\SF$ such that 
$h*f*h^c\in\So_{M_0}(\Omega)$ if and only if 
\begin{enumerate}[(i)]
\item in the case $I_0\perp M_0$, 
by setting $K_0=I_0M_0$, there exists $\rho\in\So_{\R}(\Omega)$ such that  
$(h^s)^2|\rho(h_0^2-h_1^2)$, $(h^s)^2|\rho h_0h_1$ and 
$$f=f_0+ \frac{\rho(h_0^2-h_1^2)}{(h^s)^2}M_0-2\frac{\rho h_0h_1}{(h^s)^2}K_0$$
\item in the case $\C_{I_{0}}\neq \C_{M_{0}}$ and $I_{0}\not\perp M_{0}$, 
by means of Notation~\ref{notation}, there exists $\rho\in\So_{\R}(\Omega)$ such that  
$h^s|\rho(h_0^2-h_1^2)$, $h^s|h_0h_1\rho$ and 
$$f=f_0+a \rho I_0+ \frac{ b \rho(h_0^2-h_1^2)}{h^s}J_0-2\frac{b\rho h_0h_1  }{h^s}K_0.$$
\end{enumerate} 
\end{teo}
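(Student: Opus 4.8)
The strategy is exactly parallel to the proof of Theorem~\ref{primo-coniugio}: expand $h*f*h^c$ using Equation~\eqref{coniugio}, decompose everything in a convenient orthonormal basis adapted to $I_0$ and $M_0$ via Notation~\ref{notation}, and read off what it means for the vectorial part to lie in $\C_{M_0}$. Write $h=h_0+h_1I_0$ (so $h_v=h_1I_0$) and $f=f_0+f_v$ with $f_v=f_1I_0+f_2J_0+f_3K_0$; since $f\notin\So_{I_0}(\Omega)$, the pair $f_2,f_3$ is not identically zero. Because $h_v$ is a (slice-preserving) multiple of the single imaginary unit $I_0$, the terms in~\eqref{coniugio} simplify: $h_v\pv f_v = h_1 f_2\, I_0\wedge J_0 + h_1 f_3\, I_0\wedge K_0 = h_1 f_2 K_0 - h_1 f_3 J_0$, and $(h_v\pv f_v)\pv h_v = h_1^2 f_2\, K_0\wedge I_0 - h_1^2 f_3\, J_0\wedge I_0 = -h_1^2 f_2 J_0 - h_1^2 f_3 K_0$. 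Collecting, the vectorial part of $h*f*h^c$ becomes
\begin{align*}
[h*f*h^c]_v &= h_1^2 f_v + h_0^2 f_v + 2h_0(h_1 f_2 K_0 - h_1 f_3 J_0) - (-h_1^2 f_2 J_0 - h_1^2 f_3 K_0)\\
&= h^s f_1 I_0 + (h^s f_2 - 2h_0h_1 f_3)J_0 + (h^s f_3 + 2h_0h_1 f_2)K_0,
\end{align*}
where we used $h^s = h_0^2+h_1^2$ and $\langle h_v,f_v\rangle_* h_v = h_1^2 f_1 I_0$.

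**Reading off the condition.** Now $h*f*h^c\in\So_{M_0}(\Omega)$ exactly when $[h*f*h^c]_v$ is a slice-preserving multiple of $M_0 = aI_0+bJ_0$ (in case (ii)) or of $M_0=I_0$ together with the $I_0\perp M_0$ normalization giving $M_0$ in the span of $J_0$ — wait, more carefully: in case (i), with $I_0\perp M_0$ and $K_0=I_0M_0$, we want $[h*f*h^c]_v \in \So_\R(\Omega)\cdot M_0$, i.e.\ the $I_0$- and $K_0$-components vanish; in case (ii), after applying Notation~\ref{notation} so that $M_0=aI_0+bJ_0$ with $b>0$, $a>0$, we want the $K_0$-component to vanish and the $(I_0,J_0)$-components to be proportional with ratio $a:b$. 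In case (i): the $I_0$-component $h^s f_1$ must vanish, forcing $f_1\equiv 0$ (since $\So(\Omega)$ is an integral domain and $h^s\not\equiv 0$); then setting $\rho := (h^s f_2 - 2h_0h_1 f_3)/$ something — rather, the requirement that $[h*f*h^c]_v = m_1 M_0$ for some $m_1\in\So_\R(\Omega)$, combined with $K_0 = I_0 M_0$ so that in the basis $I_0, M_0$ (appropriately), means $h^s f_3 + 2h_0h_1 f_2 = 0$ and one defines $\rho$ from $m_1$. Solving the linear system $h^s f_2 - 2h_0h_1 f_3 = \text{(the $M_0$-component)}$, $h^s f_3 + 2h_0h_1 f_2 = 0$ for $f_2,f_3$ in terms of that component $\rho$ gives, by Cramer's rule with determinant $(h^s)^2 + 4h_0^2h_1^2$... hmm, that determinant is not $(h^s)^2$. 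Let me reconsider: one should instead parametrize by writing $m_1 = \rho/(h^s)^2$ (in case (i)) or $m_1=\rho/h^s$ (in case (ii)) after clearing denominators, then solve; the divisibility conditions $(h^s)^2\mid \rho(h_0^2-h_1^2)$ and $(h^s)^2\mid \rho h_0h_1$ are precisely the conditions under which $f_2,f_3\in\So_\R(\Omega)$ exist. The computation uses $(h_0^2-h_1^2)$ and $2h_0h_1$ as the real and imaginary parts of $(h_0+h_1)^2$ relative to... actually of the quotient $h/h^c$ structure; these arise naturally when inverting the rotation by $h$.

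**Main obstacle.** The genuinely delicate point is the bookkeeping of the divisibility conditions: showing that the system for $f_2,f_3$ has a solution in $\So_\R(\Omega)$ \emph{if and only if} $(h^s)^2$ (resp.\ $h^s$) divides both $\rho(h_0^2-h_1^2)$ and $\rho h_0h_1$, and that the resulting $f$ has exactly the stated form. This requires knowing that $h_0, h_1, h^s$ have no "hidden" common factors beyond what is forced — concretely, that $\gcd(h_0^2-h_1^2, h_0h_1, \ldots)$ interacts with $h^s = h_0^2+h_1^2$ correctly. I would handle this by localizing at real points (as in Theorem~\ref{one-slice} and Theorem~\ref{primo-coniugio}): pick $x_0\in\Omega\cap\R$ where $h$ does not vanish, solve the linear system there explicitly — the coefficient matrix $\begin{pmatrix} h^s & -2h_0h_1 \\ 2h_0h_1 & h^s\end{pmatrix}$ has determinant $(h^s)^2$ only if one first... no: $\det = (h^s)^2 + 4h_0^2h_1^2$. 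The correct normalization must be that after the constraint $h^s f_3 + 2h_0h_1 f_2 = 0$ is imposed, substituting $f_3 = -2h_0h_1 f_2/h^s$ into the other equation gives $f_2(h^s + 4h_0^2h_1^2/h^s) = \rho'$, i.e.\ $f_2 \cdot ((h^s)^2+4h_0^2h_1^2)/h^s = \rho'$; and $(h^s)^2 + 4h_0^2h_1^2 = (h_0^2+h_1^2)^2+4h_0^2h_1^2 = (h_0^2-h_1^2)^2 + (2h_0h_1)^2 + \ldots$ hmm $(h_0^2-h_1^2)^2+4h_0^2h_1^2 \cdot 2$? Let me just note $(h_0^2+h_1^2)^2+4h_0^2h_1^2 \neq (h_0^2-h_1^2)^2$ in general — so the factor appearing must genuinely be traced through the rotation formula, and I expect the clean statement emerges only after writing $h^s$ in terms of $h_0\pm h_1$ or using that the rotation $h(\cdot)h^c$ acts on $\mathrm{Im}\,\HH$ with the $I_0$-component fixed and the $(J_0,K_0)$-plane rotated by an angle whose "cosine" and "sine" are $(h_0^2-h_1^2)/h^s$ and $2h_0h_1/h^s$ (this is the classical formula, since $h_0^2-h_1^2 + 2h_0h_1 i = (h_0+h_1i)^2$ and $h^s = |h_0+h_1i|^2$). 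With that geometric picture, $f$ being sent into $\C_{M_0}$ by this rotation is transparent, and the divisibility conditions are exactly the integrality constraints for inverting the rotation within $\So_\R(\Omega)$. I would then verify the converse by direct substitution, checking in each of the two cases that the displayed $f$ indeed yields $h*f*h^c\in\So_{M_0}(\Omega)$, and finally remark (as after Theorem~\ref{primo-coniugio}) that the two cases are exhaustive because any $M_0$ is either orthogonal to $I_0$, equal to $\pm I_0$ (excluded since $f\notin\So_{I_0}$ would still... actually $\C_{I_0}=\C_{M_0}$ is allowed and falls under neither (i) nor (ii), so this case must be addressed separately or shown vacuous), or in "general position" covered by Notation~\ref{notation}.
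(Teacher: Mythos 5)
Your overall strategy (expand $h*f*h^c$ via Equation~\eqref{coniugio} in the basis adapted to $I_0,M_0$ from Notation~\ref{notation} and read off the membership in $\So_{M_0}(\Omega)$ componentwise) is the same as the paper's, but the execution contains a sign error that derails the whole argument. You compute $(h_v\pv f_v)\pv h_v = -h_1^2 f_2 J_0 - h_1^2 f_3 K_0$; since $K_0\wedge I_0=+J_0$ and $J_0\wedge I_0=-K_0$, the correct value is $(h_v\pv f_v)\pv h_v = h_1^2(f_2J_0+f_3K_0)$. Consequently the vectorial part of $h*f*h^c$ is
$$
h^sf_1I_0+\bigl[(h_0^2-h_1^2)f_2-2h_0h_1f_3\bigr]J_0+\bigl[(h_0^2-h_1^2)f_3+2h_0h_1f_2\bigr]K_0,
$$
not the expression with $h^sf_2$ and $h^sf_3$ that you wrote. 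This is not cosmetic: the resulting linear system in $(f_2,f_3)$ has coefficient matrix with determinant $(h_0^2-h_1^2)^2+4h_0^2h_1^2=(h_0^2+h_1^2)^2=(h^s)^2$, which is exactly where the $(h^s)^2$ in the divisibility conditions of case (i) comes from: row-reducing gives $(h^s)^2f_2=b\,m_1(h_0^2-h_1^2)$ and $(h^s)^2f_3=-2b\,m_1h_0h_1$. Your matrix instead has determinant $(h^s)^2+4h_0^2h_1^2$; you notice yourself that this does not match the statement, and the rest of your argument never recovers --- the derivation of the divisibility conditions and of the displayed formulas for $f$ remains a sketch that cannot be completed from your formula.

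Two further points are left open. First, the case $\C_{I_0}=\C_{M_0}$: you remark at the end that it ``must be addressed separately or shown vacuous'' but do not do so; the paper shows it cannot occur, because the corresponding system forces $f_2\equiv f_3\equiv 0$, contradicting $f\notin\So_{I_0}(\Omega)$. Second, in case (ii) you never exploit the equation $f_1h^s=a\,m_1$ with $a\neq 0$ to conclude that $h^s$ divides $m_1$, i.e.\ $m_1=h^s\rho$; that substitution is what converts the $(h^s)^2$-divisibility into the stated $h^s$-divisibility and yields $f_1=a\rho$. Once the sign is corrected, both gaps close by the straightforward row reduction indicated above, plus a direct verification of the converse.
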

\begin{proof}
We first consider the case in which $I_0$ and $M_0$ are linearly independent.
Using the basis given in Notation~\ref{notation}, we can decompose $f_v$ and $h_v$ as $f_1I_0+f_2 J_0+f_3K_0$ and $h_1I_0$ respectively. Since $f\notin \So_{I_0}(\Omega)$ then $f_2$ and $f_3$ are not both identically zero.
In order to apply Equality~\eqref{coniugio}, we need to compute 
$h_v\pv f_v=h_1(-f_3J_0+f_2K_0)$ and $(h_v\pv f_v)\pv h_v=h_1^2(f_2J_0+f_3K_0)$.
So we obtain
$$h*f*h^c=[h*f*h^c]_0+f_1h^sI_0+[f_2(h_0^2-h_1^2)-2h_0h_1f_3]J_0+[f_3(h_0^2-h_1^2)+2h_0h_1f_2]K_0,$$ 
where
$h^s=h_0^2+h_1^2\not\equiv0$.
Then the function $h*f*h^c$ is $\C_{M_0}$-preserving if and only if there exists $m_1\in\So_\R(\Omega)$ such that
$$
\begin{cases}	
f_1h^s=a  m_1,\\
f_2(h_0^2-h_1^2)-2h_0h_1f_3= b  m_1,\\
f_3(h_0^2-h_1^2)+2h_0h_1f_2=0.
\end{cases}
$$
Multiplying the second equation by $h_0^2-h_1^2$, the third one by $2h_0h_1$ and summing up, we obtain 
$[(h_0^2-h_1^2)^2+4h_0^2h_1^2]f_2= b  m_1(h_0^2-h_1^2)$ that can also be written as $(h^s)^2f_2= b  m_1 (h_0^2-h_1^2)$; analogously we find $(h^s)^2f_3=-2 b  m_1 h_0h_1$.

If $I_0$ and $M_0$ are orthogonal, then $a =0$, $ b =1$ and therefore 
$f_1\equiv 0$ and $(h^s)^2$ divides both  $m_1 (h_0^2-h_1^2)$ and $m_1 h_0h_1$. Setting  $\rho=m_1$ 
we can write $$f=f_0+ \frac{\rho(h_0^2-h_1^2)}{(h^s)^2}M_0-\frac{2\rho h_0h_1}{(h^s)^2}K_0$$
and we are done. 

If $I_0$ and $M_0$ are not orthogonal, then $f_1h^s=a  m_1$; since $a \neq0$ the function $h^s$ divides $m_1$ and hence there exists $\rho\in\So_\R(\Omega)$ such that $m_1=h^s\rho$. Thus $f_1=a  \rho$, moreover 
 $h^s$ divides both $\rho(h_0^2-h_1^2)$ and $h_0h_1\rho$, so that we have
$$f=f_0+a \rho I_0+ \frac{ b \rho(h_0^2-h_1^2)}{h^s}J_0-2\frac{b\rho h_0h_1  }{h^s}K_0.$$

In both cases, the converse is easily checked by direct inspection.

Now we are left to deal with the case $\C_{I_0}=\C_{M_0}$ showing that it cannot take place. Chosen any orthonormal basis $I_0,J_0,K_0$ of ${\rm Im}\HH$, we again compute $h*f*h^c$ by means of the decomposition in real and vectorial part as above obtaining that  $h*f*h^c$ belongs to $\So_{I_0}(\Omega)$ if and only if 
$$
\begin{cases}	
f_2(h_0^2-h_1^2)-2h_0h_1f_3=0,\\
f_3(h_0^2-h_1^2)+2h_0h_1f_2=0.
\end{cases}
$$
The same row reduction as above entails $f_2=f_3\equiv 0$ that contradicts the fact that $f\notin\So_{I_0}(\Omega)$. 
\end{proof}

The following three examples give general, explicit applications of the previous 
result, clarifying the role of the divisibility conditions. In particular Example~\ref{exnozeros} completely describes the case
when $h$ is never-vanishing and
Example~\ref{polynomials} does the same when $h$ is a polynomial.

\begin{example}\label{exnozeros}	
If $h$ is never-vanishing, then $h^{s}$ is never-vanishing and in particular it is invertible in $\So_\R(\Omega)$.  In this case the conditions on the divisibility by $h^s$ or $(h^s)^2$ are always satisfied, so if $I_0$ and $M_0$ are orthogonal then $f$ is given by
$$f=f_0+ \rho\left(\frac{h_0^2-h_1^2}{(h^s)^2}M_0-2\frac{h_0h_1}{(h^s)^2}K_0\right)$$
for any $f_0,\rho\in\So_\R(\Omega)$ and if $I_0$ and $M_0$ are lineraly independent but not orthogonal 
then $f$ is given by
$$f=f_0+\rho\left(a  I_0+ \frac{ b (h_0^2-h_1^2)}{h^s}J_0-2\frac{bh_0h_1  }{h^s}K_0\right)$$
for any $f_0,\rho\in\So_\R(\Omega)$.
\end{example}

\begin{example}	
If $h_0=h_1$, that is $h=h_{0}(1+I_{0}),$ then $h^s=2h_0^2$ automatically divides both $\rho(h_0^2-h_1^2)\equiv 0$ and  $h_0h_1\rho=h_0^2\rho$ for any $\rho\in\So_{\R}(\Omega)$. Thus if $I_0$ and $M_0$ are linearly independent and not orthogonal, then the divisibility conditions are always satisfied and hence 
$f=f_0+\rho\left(a  I_0- b  K_0\right)$
for any $f_0,\rho\in\So_\R(\Omega)$.
If $I_0$ and $M_0$ are orthogonal, the first divisibility condition is trivial and the second becomes $h_0^4|\rho h_0^2$ which is equivalent to $h_0^2|\rho$. In this case the function $\rho$ must be a multiple  of $h_0^2$, so there exists $\mu\in\So_\R(\Omega)$ such that $\rho=-2h_0^2 \mu$ and hence $f$ can be written as 
$f_0+\mu K_0$ for any $f_0,\mu\in\So_\R(\Omega)$. 
\end{example}

\begin{example}\label{polynomials}	
If $h_0,h_1$  are polynomials in $q$ (with real coefficients since they are slice preserving), then we can  factor out their GCD $\alpha$ and write them as $h_0=\alpha \beta_0$, $h_1=\alpha \beta_1$ with $\beta_0$ and $\beta_1$ coprime. Now,
$h^s=\alpha^2(\beta_0^2+\beta_1^2)$. If $I_0$ and $M_0$ are  linearly independent and not orthogonal, then  the divisibility conditions become 
$$\alpha^2(\beta_0^2+\beta_1^2)|\rho\alpha^2(\beta_0^2-\beta_1^2)\qquad \mbox{and} \qquad\alpha^2(\beta_0^2+\beta_1^2)|\rho \alpha^2\beta_0\beta_1$$ which are equivalent to 
$$(\beta_0^2+\beta_1^2)|\rho(\beta_0^2-\beta_1^2)\qquad \mbox{and} \qquad(\beta_0^2+\beta_1^2)|\rho\beta_0\beta_1.$$ As $\beta_0^2+\beta_1^2$ and $\beta_0^2-\beta_1^2$ are coprime in the ring $\R[q]$, 
then the ideal they generate in $\R[q]$ coincides with $\R[q]$. This entails that 
$\beta_0^2+\beta_1^2$ divides $\rho$ which therefore can be written as $(\beta_0^2+\beta_1^2)\mu$ for a suitable $\mu\in \So_{\R}(\Omega)$. The second divisibility condition now becomes trivial and hence we can write $f$ as 
\begin{align*}
f&=f_0+a (\beta_0^2+\beta_1^2)\mu I_0+  b \mu (\beta_0^2-\beta_1^2)J_0-2 b \mu \beta_0\beta_1K_0\\
&=
f_0+\mu\left(a (\beta_0^2+\beta_1^2)I_0+  b (\beta_0^2-\beta_1^2)J_0-2 b  \beta_0\beta_1K_0\right)
\end{align*}
for suitable $f_0,\mu\in\So_\R(\Omega)$.

If $I_0$ and $M_0$ are orthogonal, then  the divisibility conditions become 
$$\alpha^2(\beta_0^2+\beta_1^2)^2|\rho(\beta_0^2-\beta_1^2)\qquad \mbox{and} \qquad \alpha^2(\beta_0^2+\beta_1^2)^2|\rho \beta_0\beta_1.$$ Again $(\beta_0^2+\beta_1^2)^2$ and $\beta_0^2-\beta_1^2$ are coprime in the ring $\R[q]$, 
then $(\beta_0^2+\beta_1^2)^2$ divides $\rho$ which therefore can be written as $(\beta_0^2+\beta_1^2)^2\mu$ for a suitable $\mu\in \So_{\R}(\Omega)$. 
The above relations become 
$$\alpha^2|\mu(\beta_0^2-\beta_1^2)\qquad \mbox{and} \qquad \alpha^2|\mu \beta_0\beta_1.$$ 
Since $\beta_0^2-\beta_1^2$ and $\beta_0\beta_1$ are coprime in the ring $\R[q]$, 
then the ideal generated by $\mu(\beta_0^2-\beta_1^2)$ and $\mu \beta_0\beta_1$ in $\So_\R(\Omega)$ coincides with the ideal generated by $\mu$ and therefore $\alpha^2$ divides $\mu$ in $\So_\R(\Omega)$. Thus we can write $\mu=\alpha^2\nu$ for a suitable slice preserving function $\nu$.
Hence we can write $f$ as 
\begin{align*}
f&=f_0+ \frac{\alpha^2\nu (\beta_0^2+\beta_1^2)^2 \alpha^2(\beta_0^2-\beta_1^2)}{\alpha^4(\beta_0^2+\beta_1^2)^2}M_0-2\frac{\alpha^2\nu (\beta_0^2+\beta_1^2)^2 \alpha^2\beta_0\beta_1}{\alpha^4(\beta_0^2+\beta_1^2)^2}K_0\\
&=f_0+ \nu \left( (\beta_0^2-\beta_1^2)M_0-2 \beta_0\beta_1K_0\right)
\end{align*}
for suitable $f_0,\nu\in\So_\R(\Omega)$. As $(\beta_0^2-\beta_1^2)M_0-2 \beta_0\beta_1K_0= (\beta_{0}-\beta_{1}I_{0})^{2}M_{0}=\left(\frac{h^{c}}{\alpha}\right)^{2}M_{0}$, last equality can also be written as
$$
f=f_{0}+\nu\left(\frac{h^{c}}{\alpha}\right)^{2}M_{0}
$$
for some $f_0,\nu\in\So_\R(\Omega)$.
\end{example}

We now change slightly our point of view by studying the existence of a one-slice preserving solution of the equation $h*f*h^c=g$, given the function $g$ and one between $f$ and $h$ which is chosen to be one-slice preserving.
In Theorem~\ref{coniugato1}, we answer this question when $f$  is one-slice preserving  and $g$ is given: we look for the solvability of $g=h*f*h^{c}$, where the solution $h$ should be again one-slice preserving; Theorem~\ref{coniugato2} answers the same issue exchanging the role of $f$ and $h$.

From now until the end of this section we ask that $\Omega\subset\HH$ is such that $\Omega_{I_{0}}=\Omega\cap\C_{I_{0}}$ has trivial first fundamental group, i.e.: $\pi_{1}(\Omega_{I_{0}})=0$ for one, and then any, $I_0\in\SF$.

Given $f\in\So_{I_{0}}(\Omega)$, a slice $\C_{M_{0}}$ with $M_{0}\in\SF$, and $g\in\So(\Omega)$, 
if $I_{0}, M_{0}$ are linearly independent 
we follow Notation~\ref{notation} writing $f=f_{0}+f_{1}I_{0}$, and
$g=g_{0}+g_{1}I_{0}+g_{2}J_{0}+g_{3}K_{0}$, with $f_{0}, f_{1}, g_{0},g_{1},g_{2},g_{3}\in\So_{\R}(\Omega)$.
%


\begin{teo}\label{coniugato1}
Given $f$, $M_{0}$ and $g$ as above, there
exists $h\in\So_{M_{0}}(\Omega)$, such that $g=h*f*h^{c}$ if and only if
\begin{enumerate}[(i)]
\item in the case $\C_{M_{0}}=\C_{I_{0}}$, then $g\in\So_{I_{0}}(\Omega)$, $f$ divides $g$, the quotient $g/f$ belongs to $\So_{\R}(\Omega)$, it is non-negative over the reals and all its real zeros have even multiplicity;
\item in the case $\C_{M_{0}}\neq\C_{I_{0}}$ and $M_{0}\not\perp I_{0}$, then $f_{0}$
divides $g_{0}$ and  $f_{1}$ divides $g_{v}$; denoted by $\alpha_{0}$ the quotient $g_{0}/f_{0}$ and $\alpha_{l}$ the quotient $g_{l}/f_{1}$, for $l=1,2,3$, we have that
$\alpha_{0}$ and $\alpha_{2}$ are non-negative over the reals and their real zeros have even multiplicity; moreover $\alpha_{2}$ has spherical zeroes of multiplicity
which is a multiple of $4$; the multiplicities of the zeroes of $\alpha_{3}$ are at least
equal to half the multiplicities of the zeroes of $\alpha_{2}$ and
\begin{equation}\label{condizione1}
a(\alpha_{0}-\alpha_{1})=b\alpha_{2},
\end{equation}
\begin{equation}\label{condizione2}
2ab\alpha_{1}\alpha_{2}= a^{2}\alpha_{3}^{2}+(2a^{2}-1)\alpha_{2}^{2}.
\end{equation}
\item in the case $M_{0}\perp I_{0}$, then $f_{0}$
divides $g_{0}$, the quotient $\alpha_{0}=g_{0}/f_{0}$ is non-negative over the reals and its real zeros have even multiplicity; $g_{2}\equiv 0$; $f_{1}$ divides $g_{v}$ and denoted by $\alpha_{l}$ the quotient $g_{l}/f_{1}$, for $l=1,3$, we have
\begin{equation}\label{ternapitagorica}
\alpha_{0}^{2}=\alpha_{1}^{2}+\alpha_{3}^{2}.
\end{equation}
\end{enumerate}
\end{teo}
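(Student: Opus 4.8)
The plan is to turn the equation $g=h*f*h^{c}$, with $h\in\So_{M_{0}}(\Omega)$ the unknown, into an explicit system in the commutative ring $\So_{\R}(\Omega)$ by expanding $h*f*h^{c}$ via formula~\eqref{coniugio}, and then to read off the three cases, delegating every zero-count and square-root extraction to the factorization results of Section~3. Case (i) is immediate: if $\C_{M_{0}}=\C_{I_{0}}$ and $h\in\So_{I_{0}}(\Omega)$, then $f,h,h^{c}$ all lie in $\So_{I_{0}}(\Omega)$, where the $*$-product is commutative, so $h*f*h^{c}=f\,h^{s}$; hence $g=h*f*h^{c}$ forces $g\in\So_{I_{0}}(\Omega)$, $f\mid g$, $g/f=h^{s}\in\So_{\R}(\Omega)$, and the converse existence of $h\in\So_{I_{0}}(\Omega)$ with $h^{s}=g/f$ is exactly Theorem~\ref{radicesimmetrizzata} applied to $\mu=g/f$, which yields the sign and even-order conditions in~(i).

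For (ii) and (iii) I would adopt Notation~\ref{notation}, write $f=f_{0}+f_{1}I_{0}$ and $h=\tilde h_{0}+\tilde h_{1}M_{0}$ with $\tilde h_{0},\tilde h_{1}\in\So_{\R}(\Omega)$, observe $h_{v}=\tilde h_{1}M_{0}$ and $h^{s}=\tilde h_{0}^{2}+\tilde h_{1}^{2}$, and substitute into~\eqref{coniugio} using $M_{0}=aI_{0}+bJ_{0}$, $I_{0}\wedge M_{0}=bK_{0}$, $K_{0}\wedge M_{0}=aJ_{0}-bI_{0}$. A direct computation gives
\begin{equation*}
h*f*h^{c}=f_{0}(\tilde h_{0}^{2}+\tilde h_{1}^{2})+f_{1}(\tilde h_{0}^{2}+(a^{2}-b^{2})\tilde h_{1}^{2})I_{0}+2ab\,f_{1}\tilde h_{1}^{2}J_{0}-2b\,f_{1}\tilde h_{0}\tilde h_{1}K_{0},
\end{equation*}
so by the uniqueness in Proposition~\ref{GMP} the equation $g=h*f*h^{c}$ is equivalent to the system
\begin{equation*}
g_{0}=f_{0}(\tilde h_{0}^{2}+\tilde h_{1}^{2}),\qquad g_{1}=f_{1}(\tilde h_{0}^{2}+(a^{2}-b^{2})\tilde h_{1}^{2}),\qquad g_{2}=2ab\,f_{1}\tilde h_{1}^{2},\qquad g_{3}=-2b\,f_{1}\tilde h_{0}\tilde h_{1}.
\end{equation*}
From this system the necessity direction is bookkeeping: it forces $f_{0}\mid g_{0}$ and $f_{1}\mid g_{v}$, and inspection of the quotients $\alpha_{0}=g_{0}/f_{0}$, $\alpha_{l}=g_{l}/f_{1}$ gives their positivity over the reals, the even order of their real zeros, the order (a multiple of $4$) of the spherical zeros of $\alpha_{2}=2ab\,\tilde h_{1}^{2}$, and the half-order comparison between $\alpha_{3}=-2b\,\tilde h_{0}\tilde h_{1}$ and $\alpha_{2}$; the identities~\eqref{condizione1} and~\eqref{condizione2} drop out by eliminating $\tilde h_{0},\tilde h_{1}$, since subtracting the first two equations yields $\alpha_{0}-\alpha_{1}=2b^{2}\tilde h_{1}^{2}$ (whence $a(\alpha_{0}-\alpha_{1})=b\alpha_{2}$) and $a^{2}\alpha_{3}^{2}+(2a^{2}-1)\alpha_{2}^{2}=4a^{2}b^{2}\tilde h_{1}^{2}\alpha_{1}=2ab\,\alpha_{1}\alpha_{2}$. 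The orthogonal sub-case $a=0,\ b=1$ degenerates to $g_{2}\equiv0$ together with $\alpha_{0}^{2}-\alpha_{1}^{2}=\alpha_{3}^{2}$, which is~\eqref{ternapitagorica}.

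The real content is the converse. Given $g$ satisfying the hypotheses of~(ii) I would set $\tilde h_{1}$ to be a square root of $\alpha_{2}/(2ab)$ and $\tilde h_{0}=-\alpha_{3}/(2b\tilde h_{1})$, and check via~\eqref{condizione1} and~\eqref{condizione2} that all four equations hold; in~(iii) I would instead take $\tilde h_{1}^{2}=(\alpha_{0}-\alpha_{1})/2$, $\tilde h_{0}^{2}=(\alpha_{0}+\alpha_{1})/2$, fixing the sign of the product $\tilde h_{0}\tilde h_{1}$ through $\alpha_{3}$. The hard part is ensuring that these square roots, and the quotient defining $\tilde h_{0}$, exist as \emph{global regular functions on all of $\Omega$}: this is exactly where Proposition~\ref{nostroweierstrass}, Theorem~\ref{radicesimmetrizzata}, Corollary~3.2 in~\cite{A-dF} and the standing assumption $\pi_{1}(\Omega_{I_{0}})=0$ enter, and where the parity conditions and the half-order comparison are used, the latter being precisely what makes $\alpha_{3}$ divisible by $\tilde h_{1}$. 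The degenerate sub-case $\tilde h_{1}\equiv0$, i.e.~$g_{2}\equiv0$, reduces to case~(i); and once $\tilde h_{0},\tilde h_{1}$ are built, $h=\tilde h_{0}+\tilde h_{1}M_{0}$ solves $g=h*f*h^{c}$ by back-substitution into the system.
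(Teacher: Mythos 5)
Your proposal is correct and follows essentially the same route as the paper: case (i) via commutativity in $\So_{I_{0}}(\Omega)$ plus Theorem~\ref{radicesimmetrizzata}, and cases (ii)--(iii) by expanding $h*f*h^{c}$ through~\eqref{coniugio} into the same four-equation system (your coefficient $a^{2}-b^{2}$ equals the paper's $2a^{2}-1$ since $a^{2}+b^{2}=1$), then reconstructing $h_{0},h_{1}$ by square-root extraction — in (iii) exactly via $u=(\alpha_{0}+\alpha_{1})/2$, $v=(\alpha_{0}-\alpha_{1})/2$ — with the multiplicity and sign hypotheses justifying the global existence of those roots via the Section~3 factorization results.
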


\begin{proof}
\textit{(i)} In this case $h*f*h^{c}=h^{s}f$. 
Thus $g\in\So_{I_{0}}(\Omega)$, the function $f$ divides $g$ and the quotient $g/f$ belongs to $\So_{\R}(\Omega)$, it is non-negative over the reals and has real zeroes
of even multiplicities. Vice versa if $g\in\So_{I_{0}}(\Omega)$ can be written as 
$g=\alpha f$, for $\alpha\in\So_{\R}(\Omega)$ with $\alpha$ non-negative over the reals and having real zeroes of even multiplicity, thanks to Theorem~\ref{radicesimmetrizzata}, we can find $h\in\So_{I_{0}}(\Omega)$ such that
$h^{s}=\alpha$ and hence $g=h^{s}f=h*f*h^{c}$.\\
If $\C_{I_{0}}\neq \C_{M_{0}}$ we use Notation~\ref{notation} and
we write $h=h_{0}+h_{1}M_{0}$ for suitable $h_{0},h_{1}\in\So_{\R}(\Omega)$.
According to~\eqref{coniugio} we have
$$
h*f*h^{c}=f_{0}(h_{0}^{2}+h_{1}^{2})+f_{1}[(h_{0}^{2}+(2a^{2}-1)h_{1}^{2})I_{0}+2abh_{1}^{2}J_{0}-2bh_{0}h_{1}K_{0}].
$$
Then $h*f*h^{c}=g$ is equivalent to the following system of equations:
\begin{equation}\label{condizionig}
\begin{cases}
g_{0}=f_{0}(h_{0}^{2}+h_{1}^{2})\\
g_{1}=f_{1}(h_{0}^{2}+(2a^{2}-1)h_{1}^{2})\\
g_{2}=2abf_{1}h_{1}^{2}\\
g_{3}=-2bf_{1}h_{0}h_{1}.
\end{cases}
\end{equation}
\textit{(ii)} In this case both $a$ and $b$ are positive, so if $g=h*f*h^{c}$, we have~\eqref{condizionig} and the necessity is straightforward. Vice versa the conditions on $\alpha_{2}$ guarantee the existence of $h_{1}\in\So_{\R}(\Omega)$ such that $\alpha_{2}=2abh_{1}^{2}$; in particular
 the multiplicities of the zeroes of $h_{1}$ are equal to half of the multiplicities of the 
 zeroes of $\alpha_{2}$. The relations on the multiplicities of the zeroes of
 $\alpha_{2}$ and $\alpha_{3}$ allow us to find $h_{0}=\frac{\alpha_{3}}{-2bh_{1}}\in\So_{\R}(\Omega)$. 
Equality~\eqref{condizione2} gives that 
$$\alpha_{1}=\frac{a^{2}\alpha_{3}^{2}+(2a^{2}-1)\alpha_{2}^{2}}{2ab\alpha_{2}}.$$
Substituting the formulas for $\alpha_{2}$ and $\alpha_{3}$ in terms of $h_{0}$ and 
$h_{1}$ in the previous equality we find $\alpha_{1}=h_{0}^{2}+(2a^{2}-1)h_{1}^{2}$.
Now~\eqref{condizione1} gives $\alpha_{0}=\alpha_{1}+\frac{b}{a}\alpha_{2}$. By substituting $\alpha_{1}$ and $\alpha_{2}$ in terms of $h_{0}$ and
$h_{1}$ and recalling that $a^{2}+b^{2}=1$ we have $\alpha_{0}=h_{0}^{2}+h_{1}^{2}$ and thus we find $h=h_{0}+h_{1}M_{0}$ such that $h*f*h^{c}=g$.\\
\textit{(iii)} In this case $a=0$ and $b=1$, so if $g=h*f*h^{c}$, we have $g_{2}\equiv 0$ and
\begin{equation*}
\begin{cases}
g_{0}=f_{0}(h_{0}^{2}+h_{1}^{2})\\
g_{1}=f_{1}(h_{0}^{2}-h_{1}^{2})\\
g_{3}=-2f_{1}h_{0}h_{1}
\end{cases}
\end{equation*}
and the necessity of the conditions is trivial. Vice versa suppose that $f_{0}$ divides $g_{0}$,
the quotient $\alpha_{0}=g_{0}/f_{0}$ is non-negative over the reals and has real zeros of even multiplicity; $g_{2}\equiv 0$; $f_{1}$ divides $g_{v}$ and, by denoting by $\alpha_{l}$ the quotient $g_{l}/f_{1}$, for $l=1,3$, Equality~\eqref{ternapitagorica} is satisfied.
Thanks to Proposition~\ref{nostroweierstrass}
we can get rid of the common spherical zeroes of $\alpha_{0}$ and $\alpha_{1}$ and so of $\alpha_{3}$.
Now the hypothesis on the sign of $\alpha_{0}$ over the reals, together with~\eqref{ternapitagorica}, entail that $u=\frac{\alpha_{0}+\alpha_{1}}{2}$ and $v=\frac{\alpha_{0}-\alpha_{1}}{2}$ 
both belong to $\So_{\R}(\Omega)$, are non-negative over the reals and
thus their real zeroes must have even multiplicity. 
Since  $\alpha_{0}$ and $\alpha_{1}$ have no common spherical
zeroes, then the same holds for $u$ and $v$.
If  $S_{0}$ is a spherical zero for $u$, then it is also a spherical zero for $\alpha_{3}^{2}$ and hence for $\alpha_{3}$. The fact that it is not a spherical 
zero for $v$ ensures that it is a spherical zero of $u$ with multiplicity which is a multiple
of $4$; the same property on the multiplicities of spherical zeroes holds for $v$.
Thus, thanks to Proposition 3.1 in~\cite{A-dF},  we can find $h_{0},h_{1}\in\So_{\R}(\Omega)$, such that $u=h_{0}^{2}$ and $v=h_{1}^{2}$.
The definitions of $u$ and $v$ entail that $\alpha_{0}=h_{0}^{2}+h_{1}^{2}$ and
$\alpha_{1}=h_{0}^{2}-h_{1}^{2}$. Thus~\eqref{ternapitagorica} gives $\alpha_{3}^{2}=4h_{0}^{2}h_{1}^{2}$; up to a change of sign for $h_{1}$ we
obtain $\alpha_{3}=-2h_{0}h_{1}$, that is $g=h*f*h^{c}$ for $h=h_{0}+h_{1}M_{0}$.
\end{proof}

Now we deal with the ``dual'' problem of giving appropriate conditions on a one-slice preserving function $h$
and a function $g$ in order to find a one-slice preserving map $f$ such that $h*f*h^{c}=g$.  
Given $h\in\So_{M_{0}}(\Omega)\setminus\So_{\R}(\Omega)$, $\C_{I_{0}}$ a slice in $\HH$ with $I_{0}\in\SF$ and $g\in\So(\Omega)$, if $I_{0}$ and $M_{0}$ are linearly independent,
we choose an orthonormal basis $I_0,J_0,K_0$ of ${\rm Im}\HH$ as in Notation~\ref{notation}.
Moreover, we write $h=h_{0}+h_{1}M_{0}$, and
$g=g_{0}+g_{1}I_{0}+g_{2}J_{0}+g_{3}K_{0}$, with $h_{0}, h_{1}$ and
$g_{0},g_{1},g_{2},g_{3}$ belonging to $\So_{\R}(\Omega)$.

\begin{prop}\label{coniugato2}
Given $h$, $I_{0}$ and $g$ as above, there
exists $f\in\So_{I_{0}}(\Omega)$, such that $g=h*f*h^{c}$ if and only if
\begin{enumerate}[(i)]
\item in the case $\C_{M_{0}}=\C_{I_{0}}$, we have $g\in\So_{I_{0}}(\Omega)$ and $h^{s}$ divides $g$.
\item in the case $\C_{M_{0}}\neq\C_{I_{0}}$ and $M_{0}\not\perp I_{0}$, we have that $h^{s}$
divides $g_{0}$, $h_{0}^{2}+(2a^{2}-1)h_{1}^{2}$ divides $g_{1}$,  $h_{1}^{2}$ divides $g_{2}$, $h_{0}h_{1}$ divides $g_{3}$ and
\begin{equation}\label{condizione3}
h_{0}g_{2}+ah_{1}g_{3}=0,
\end{equation}
\begin{equation}\label{condizione4}
(h_{0}^{2}+(2a^{2}-1)h_{1}^{2})g_{2}=2abh_{1}^{2}g_{1}.
\end{equation}
\item  in the case $M_{0}\perp I_{0}$, we have that $h^{s}$ divides $g_{0}$, $h_{0}^{2}-h_{1}^{2}$ divides $g_{1}$, $h_{0}h_{1}$ divides $g_{3}$, $g_{2}\equiv 0$ 
and
\begin{equation}\label{condizione5}
2h_{0}h_{1}g_{1}+(h_{0}^{2}-h_{1}^{2})g_{3}=0.
\end{equation}
\end{enumerate}
\end{prop}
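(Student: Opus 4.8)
The plan is to proceed exactly as in the proof of Theorem~\ref{coniugato1}: expand $h*f*h^{c}$ by means of Equation~\eqref{coniugio}, but now read the resulting equations as a \emph{linear} system in the unknowns $f_{0},f_{1}$ rather than a quadratic one in $h_{0},h_{1}$. This is the structural reason why the conditions here involve only divisibility and linear identities, with none of the positivity or even‑multiplicity constraints of Theorem~\ref{coniugato1}. First I would dispose of the case $\C_{M_{0}}=\C_{I_{0}}$: here both $h$ and $f$ lie in $\So_{I_{0}}(\Omega)$, which is commutative under $*$, so $h*f*h^{c}=f*h*h^{c}=h^{s}f$; hence $g=h*f*h^{c}$ forces $g\in\So_{I_{0}}(\Omega)$ and $h^{s}\mid g$, and conversely $f:=g/h^{s}$ lies in $\So_{I_{0}}(\Omega)$ since $h^{s}\in\So_{\R}(\Omega)$ and $\So(\Omega)$ is an integral domain. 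This settles $(i)$.

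For $\C_{M_{0}}\neq\C_{I_{0}}$ I would adopt Notation~\ref{notation}, write $f=f_{0}+f_{1}I_{0}$ and $h=h_{0}+h_{1}M_{0}$ with $M_{0}=aI_{0}+bJ_{0}$, and substitute into \eqref{coniugio}; this is precisely the computation already performed in the proof of Theorem~\ref{coniugato1} and gives
$$
h*f*h^{c}=f_{0}h^{s}+f_{1}\bigl(h_{0}^{2}+(2a^{2}-1)h_{1}^{2}\bigr)I_{0}+2abf_{1}h_{1}^{2}J_{0}-2bf_{1}h_{0}h_{1}K_{0},
$$
so that $g=h*f*h^{c}$ is equivalent to the system~\eqref{condizionig}. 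The necessity part of $(ii)$ and $(iii)$ is then immediate by comparing components: the first equation gives $h^{s}\mid g_{0}$, the three equations carrying $f_{1}$ give $h_{0}^{2}+(2a^{2}-1)h_{1}^{2}\mid g_{1}$, $h_{1}^{2}\mid g_{2}$ and $h_{0}h_{1}\mid g_{3}$ (in case $(iii)$, where $a=0$, the $J_{0}$‑equation degenerates to $g_{2}\equiv 0$), and eliminating $f_{1}$ between pairs of the last three equations produces exactly \eqref{condizione3} and \eqref{condizione4} in case $(ii)$, and \eqref{condizione5} in case $(iii)$. Since $h\notin\So_{\R}(\Omega)$ one has $h_{1}\not\equiv 0$, and $\So_{\R}(\Omega)$ being an integral domain, every quotient, when it exists, is unique.

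For the sufficiency I would set $f_{0}:=g_{0}/h^{s}\in\So_{\R}(\Omega)$ straight from the first equation, noting that no further relation couples $f_{0}$ to $f_{1}$, and then define $f_{1}$ as the common value of the quotients $g_{1}/(h_{0}^{2}+(2a^{2}-1)h_{1}^{2})$, $g_{2}/(2abh_{1}^{2})$ and $-g_{3}/(2bh_{0}h_{1})$: each of these lies in $\So_{\R}(\Omega)$ by the divisibility hypotheses, and the compatibility identities \eqref{condizione3}--\eqref{condizione4} (respectively \eqref{condizione5}, obtained from the specialization $a=0$, $b=1$) are precisely what forces the three quotients to coincide. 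With $f_{0},f_{1}$ so defined, $f=f_{0}+f_{1}I_{0}\in\So_{I_{0}}(\Omega)$ satisfies all four equations of~\eqref{condizionig} by construction, hence $h*f*h^{c}=g$.

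The step I expect to be the main obstacle is not conceptual but rather the bookkeeping of the degenerate subcases in which a divisor vanishes identically: if $h_{0}\equiv 0$ then $h_{0}h_{1}\mid g_{3}$ has to be read as $g_{3}\equiv 0$, and $f_{1}$ must be taken from whichever of the remaining quotients is well defined; likewise $h_{0}^{2}+(2a^{2}-1)h_{1}^{2}$ can vanish identically (only when $a^{2}=1/2$ and $h_{0}\equiv 0$), forcing $g_{1}\equiv 0$. One checks that even in these situations the identities \eqref{condizione3}--\eqref{condizione5} still express exactly the required consistency among the surviving quotients, so once the subcases are isolated the verification reduces to a short direct computation using only $a^{2}+b^{2}=1$ and the integrality of $\So_{\R}(\Omega)$.
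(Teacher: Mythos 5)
Your proposal is correct and follows essentially the same route as the paper: both arguments reduce the problem to the linear system~\eqref{condizionig} already computed in the proof of Theorem~\ref{coniugato1}, read necessity off componentwise, and for sufficiency define $f_{0}=g_{0}/h^{s}$ and take $f_{1}$ from the always-well-defined quotient $g_{2}/(2abh_{1}^{2})$ in case \textit{(ii)} (resp.\ $-g_{3}/(2h_{0}h_{1})$, or $-g_{1}/h_{1}^{2}$ when $h_{0}\equiv 0$, in case \textit{(iii)}), with \eqref{condizione3}--\eqref{condizione5} supplying the consistency of the remaining equations. One cosmetic slip in your final paragraph: $h_{0}^{2}+(2a^{2}-1)h_{1}^{2}$ can vanish identically whenever $a^{2}\le 1/2$ and $h_{0}\equiv\pm\sqrt{1-2a^{2}}\,h_{1}$, not only when $a^{2}=1/2$ and $h_{0}\equiv 0$; this is harmless, since in that situation the divisibility hypothesis (or \eqref{condizione4}) forces $g_{1}\equiv 0$ and the construction via $g_{2}/(2abh_{1}^{2})$ goes through unchanged.
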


\begin{proof}
\textit{(i)} In this case $g=h^{s}f$, therefore the necessity of the conditions holds
trivially. Vice versa if $g$ belongs to $\So_{I_{0}}(\Omega)$ and $h^{s}$ divides $g$
then the quotient $g/h^{s}$ is in $\So_{I_{0}}(\Omega)$ because
$h^{s}$ is slice preserving. The thesis is obtained by taking $f=g/h^{s}$.\\
If $\C_{I_{0}}\neq\C_{M_{0}}$,
we write $f=f_{0}+f_{1}I_{0}$ for suitable $f_{0},f_{1}\in\So_{\R}(\Omega)$.
The computations performed in the proof of Theorem~\ref{coniugato1} entail
the system of conditions~\eqref{condizionig}.\\
\textit{(ii)} In this case the necessity of conditions is again trivial from 
system~\eqref{condizionig}. Vice versa, setting $f_{0}=g_{0}/h^{s}$, $f_{1}=g_{2}/(2abh_{1}^{2})$, we obtain, thanks to~\eqref{condizione3} and~\eqref{condizione4}
that the equality $h*f*h^{c}=g$ holds thanks to~\eqref{condizionig}.\\
\textit{(iii)} Again, the necessity of the conditions is straightforward. 
If $h_{0}\not\equiv 0$, then setting $f_{0}=g_{0}/h^{s}$ and $f_{1}=g_{3}/(-2h_{0}h_{1})$ gives the thesis, thanks to~\eqref{condizione5} and~\eqref{condizionig}.
If $h_{0}\equiv 0$ we then have $h^{s}=h_{1}^{2}$ and~\eqref{condizione5} entails $g_{3}\equiv 0$. Then, setting
$f_{0}=g_{0}/h_{1}^{2}$ and $f_{1}=-g_{1}/h_{1}^{2}$ ends the proof again thanks to~\eqref{condizionig}.
\end{proof}

\begin{remark}
The fact that statement and proof of Proposition~\ref{coniugato2} are neater than
the ones of Theorem~\ref{coniugato1} can be seen as a consequence that
in a certain sense the equality $h*f*h^{c}=g$ is ``linear'' in $f$ and ``quadratic'' in $h$. 
\end{remark}

\section{More products}
Given $f$ and $h$ such that $f*h$ is one-slice preserving, it is not always true that $h*f$ is one-slice preserving
as well.
The results obtained so far for the conjugate of a given function give us a better
understanding of the behaviour of the $*$-product. In particular
we are able to give necessary and sufficient conditions on the two factors in
order that the two $*$-products in different orders are both one-slice preserving. 
The first result explicitly describes the two factors in terms of functions which
are one-slice preserving, showing that if the products of two functions in the two possible orders are both one-slice preserving, then the two factors are obtained by suitably ``twisting'' two one-slice preserving function which preserve the same slice for a fixed quaternion.

\begin{teo}\label{moreproducts}
Let $f,h\in\So(\Omega)\setminus\{0\}$. There exist $I_{0}, M_{0}\in\SF$ such that
$f*h\in\So_{I_{0}}(\Omega)\setminus\So_{\R}(\Omega)$ and $h*f\in\So_{M_{0}}(\Omega)\setminus\So_{\R}(\Omega)$ if and only if
\begin{enumerate}[(i)]
\item in the case $\C_{I_{0}}=\C_{M_{0}}$, either $f,h\in\So_{I_{0}}(\Omega)$ or there exist $J_{0},K_{0}\in\SF$ both orthogonal to $I_{0}$
and $\tilde f, \tilde h\in\So_{I_{0}}(\Omega)\setminus\{0\}$ such that
\begin{equation}\label{biprodottodip}
f=\tilde f*K_{0},\qquad
h=J_{0}*\tilde h.
\end{equation}
\item in the case $\C_{I_{0}}\neq\C_{M_{0}}$, there exist $\tilde f, \tilde h\in\So_{I_{0}}(\Omega)\setminus\{0\}$, such that 
\begin{equation}\label{biprodotto}
f=\tilde f*\left(1+\frac{a \pm1}{ b }K_0\right),\qquad
h=\left(1-\frac{a \pm1}{ b }K_0\right)*\tilde h,
\end{equation}
where we follow Notation~\ref{notation}.
\end{enumerate}
\end{teo}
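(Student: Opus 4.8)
The guiding idea is to turn the two–sided statement into a single conjugation, which is already governed by Theorem~\ref{primo-coniugio}. The starting point is the identity
\[
f^{s}\,(h*f)=f^{c}*(f*h)*(f^{c})^{c},
\]
valid for all $f,h\in\So(\Omega)$: indeed $f^{c}*(f*h)*f=(f^{c}*f)*h*f=f^{s}*h*f=f^{s}(h*f)$ since $f^{s}\in\So_{\R}(\Omega)$ is central, and $f=(f^{c})^{c}$. As $\So(\Omega)$ is an integral domain, $f\not\equiv0$ gives $f^{s}\not\equiv0$; moreover multiplying a function by a fixed nonzero slice preserving one does not change which slice (if any) it preserves (decompose in an orthonormal basis and use that $\So_{\R}(\Omega)$ is a domain). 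Hence, once $f*h\in\So_{I_{0}}(\Omega)\setminus\So_{\R}(\Omega)$ is assumed, $h*f\in\So_{M_{0}}(\Omega)$ if and only if $f^{c}*(f*h)*(f^{c})^{c}\in\So_{M_{0}}(\Omega)$, i.e.\ exactly the situation of Theorem~\ref{primo-coniugio} with conjugated function $f*h$ and conjugator $f^{c}$.

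For the forward implication I would split according to whether $f\in\So_{I_{0}}(\Omega)$. If it is, then $f^{c}\in\So_{I_{0}}(\Omega)$ and $f^{s}h=f^{c}*(f*h)$ forces $h\in\So_{I_{0}}(\Omega)$ (a $*$-product of two $\C_{I_{0}}$-preserving functions is $\C_{I_{0}}$-preserving, then divide out $f^{s}$); hence $h*f\in\So_{I_{0}}(\Omega)\setminus\So_{\R}(\Omega)$, so $\C_{M_{0}}=\C_{I_{0}}$ and we are in the first alternative of (i). If $f\notin\So_{I_{0}}(\Omega)$, equivalently $f^{c}\notin\So_{I_{0}}(\Omega)$, then Theorem~\ref{primo-coniugio} describes $f^{c}$: when $\C_{I_{0}}\neq\C_{M_{0}}$ one has $f^{c}=\bigl(1-\tfrac{a\pm1}{b}K_{0}\bigr)*g$ with $g\in\So_{I_{0}}(\Omega)\setminus\{0\}$, and conjugating (using $K_{0}^{c}=-K_{0}$) gives $f=\tilde f*\bigl(1+\tfrac{a\pm1}{b}K_{0}\bigr)$ with $\tilde f=g^{c}$; when $\C_{I_{0}}=\C_{M_{0}}$ one has $f^{c}=J_{0}*g$ with $I_{0}\perp J_{0}$, whence $f=\tilde f*K_{0}$ with $K_{0}:=J_{0}$ and $\tilde f=-g^{c}$.

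It then remains to recover the stated form of $h$. In both subcases write $f=\tilde f*w$, where $w$ is the nonzero constant quaternion above and $w*w^{c}=w^{s}$ is a nonzero real constant (either $w=1+\tfrac{a\pm1}{b}K_{0}$ or $w=K_{0}$). From $f*h=\tilde f*(w*h)\in\So_{I_{0}}(\Omega)$ and $\tilde f\in\So_{I_{0}}(\Omega)\setminus\{0\}$ we get $\tilde f^{s}\,(w*h)=\tilde f^{c}*(f*h)\in\So_{I_{0}}(\Omega)$, hence $w*h\in\So_{I_{0}}(\Omega)$; then $h=w^{-*}*(w*h)=\tfrac{1}{w^{s}}\,w^{c}*(w*h)$ has the form $\bigl(1-\tfrac{a\pm1}{b}K_{0}\bigr)*\tilde h$, respectively $K_{0}*\tilde h$, for a suitable $\tilde h\in\So_{I_{0}}(\Omega)\setminus\{0\}$, as required. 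For the converse one substitutes the displayed forms: in (ii) the two constant factors multiply to the positive real $1+(\tfrac{a\pm1}{b})^{2}$, so $f*h$ is a real multiple of $\tilde f*\tilde h\in\So_{I_{0}}(\Omega)$, while $h*f=\bigl(1-\tfrac{a\pm1}{b}K_{0}\bigr)*(\tilde h*\tilde f)*\bigl(1+\tfrac{a\pm1}{b}K_{0}\bigr)$ is $\C_{M_{0}}$-preserving by the final computation in the proof of Theorem~\ref{primo-coniugio}; in (i), rewriting $f=K_{0}*\tilde f^{c}$ and $h=\tilde h^{c}*J_{0}$ via the Remark following Theorem~\ref{primo-coniugio} exhibits both $f*h$ and $h*f$ as $*$-products of functions of $\So_{I_{0}}(\Omega)$ with the constant $K_{0}J_{0}\in\C_{I_{0}}$ (the cross product of two elements of ${\rm Im}\HH$ orthogonal to $I_{0}$ lies in $\R I_{0}$), hence $\C_{I_{0}}$-preserving. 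The checks that the products are genuinely not slice preserving reduce, through the Remark in Section~4, to natural non-degeneracy of $\tilde f,\tilde h$.

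The main obstacle is precisely the first step: recognizing that the apparently symmetric demand on $f*h$ and $h*f$ becomes, after multiplication by $f^{s}$, a one-sided conjugation $k*(f*h)*k^{c}$ with $k=f^{c}$, so that the whole classification can be imported from Theorem~\ref{primo-coniugio}. Everything afterwards is bookkeeping: following how quaternionic conjugation turns a left factor into a right one and flips $1-\tfrac{a\pm1}{b}K_{0}$ into $1+\tfrac{a\pm1}{b}K_{0}$; inverting the constant $*$-factor $w$ to pass from $f*h$ back to $h$; and, for the converse in case (i), the elementary remark that $K_{0}J_{0}\in\C_{I_{0}}$ whenever $K_{0},J_{0}\perp I_{0}$.
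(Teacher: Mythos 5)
Your proposal is correct and follows essentially the same route as the paper: both hinge on the identity $f^{c}*(f*h)*f=f^{s}(h*f)$ (the paper also records $h*(f*h)*h^{c}=h^{s}(h*f)$) and then import the classification from Theorem~\ref{primo-coniugio}. The only difference is cosmetic: the paper applies Theorem~\ref{primo-coniugio} a second time, with conjugator $h$, to read off the form of $h$ directly, whereas you recover $h$ from the form of $f$ by cancelling the constant $*$-factor $w$; both versions are equally valid, and your treatment of the residual ``not slice preserving'' checks is no less complete than the paper's.
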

  
\begin{proof}
First of all we notice that 
\begin{equation} \label{prodconj}
\begin{split}
h*(f*h)*h^{c}&=h*f*h^{s}=h^{s}(h*f),\\
f^{c}*(f*h)*f&=f^{s}(h*f).
\end{split}
\end{equation}

As $f*h\in\So_{I_{0}}(\Omega)\setminus\So_{\R}(\Omega)$ and $h*f\in\So_{M_{0}}(\Omega)\setminus\So_{\R}(\Omega)$, Theorem~\ref{primo-coniugio} drives the remainder of the proof.\\
\textit{(i)} For the sufficiency of the conditions it is enough to perform direct computations keeping in mind that:
\begin{itemize}
\item if $J_{0},K_{0}\in\SF$ are both orthogonal to $I_{0}$, then $J_{0}K_{0}\in\C_{I_{0}}$;
\item if $g\in\So_{I_{0}}(\Omega)$ and $J_{0}\in\SF$ is orthogonal to $I_{0}$, then
$J_{0}*g=g^{c}*J_{0}$.
\end{itemize}
Vice versa, if at least one between $f$ and $h$ does not belong to $\So_{I_{0}}(\Omega)$, then, both $f$ and
$h$ do not belong to $\So_{I_{0}}(\Omega)$ because both their $*$-products do.
Thus Theorem~\ref{primo-coniugio} allows us to find $J_{0},K_{0}\in\SF$ both orthogonal to $I_{0}$ and $g, \tilde h\in\So_{I_{0}}(\Omega)$ such that 
\begin{equation*}
f^{c}=K_{0}*g,\qquad
h=J_{0}*\tilde h.
\end{equation*}
Setting $\tilde f=-g^{c}$ gives~\eqref{biprodottodip}.
\\
\textit{(ii)} Again, the sufficiency of the conditions is proved by direct inspection. Vice versa, 
we observe that by first equality in~\eqref{prodconj}, $h$ cannot belong to $\So_{I_{0}}(\Omega)$. Then, by adopting Notation~\ref{notation}, we have that Theorem~\ref{primo-coniugio} ensures the existence of $g, \tilde h\in\So_{I_{0}}(\Omega)$ such that
\begin{equation*}
f^{c}=\left(1-\frac{a \pm1}{ b }K_0\right)*g,\qquad
h=\left(1-\frac{a \pm1}{ b }K_0\right)*\tilde h.
\end{equation*}
Setting $\tilde f=g^{c}$ gives~\eqref{biprodotto}.
\end{proof}

In the case one of the two factors appearing in the previous result is one-slice preserving itself,
the special form of the two factors obtained in the statement becomes even more special.

\begin{prop}\label{lastprod}
Let $f\in\So_{I_0}(\Omega)\setminus\So_{\R}(\Omega)$ and $h\in\So(\Omega)\setminus\{0\}$ such that
$f*h\in\So_{M_0}(\Omega)$ and $h*f\in\So_{N_0}(\Omega)$, then
\begin{enumerate}[(i)]
\item in the case $\C_{M_{0}}=\C_{N_{0}}$, either $\C_{I_0}=\C_{M_0}$ and $f,h\in\So_{M_0}(\Omega)$ or $I_0\bot M_0$ and
there exist $\alpha\in\So_{\R}(\Omega)$, $\tilde{h}\in\So_{M_0}(\Omega)$ and $J_0\bot M_0$ such that $f=\alpha I_0$ and $h=J_0*\tilde{h}$;
\item in the case $\C_{M_{0}}\neq\C_{N_{0}}$, setting $K_0=\frac{M_0\wedge N_0}{|M_0\wedge N_0|}$, 
$L_0=-M_0 K_0$ (so that $M_0, L_0, K_0$ is a positive orthonormal basis) and $N_0=aM_0+bL_0$, we can write $I_0=lM_0+mL_0+nK_0$ with 
$bm+l(a\pm1)=0$ and there exist
$\alpha\in\So_{\R}(\Omega)$, $\tilde{h}\in\So_{M_0}(\Omega)$ such that 
\begin{equation*}
f=\alpha\left(-\frac{n}{m}+M_0\right)*\left(1+\frac{a \pm1}{ b }K_0\right),\qquad
h=\left(1-\frac{a \pm1}{ b }K_0\right)*\tilde h.
\end{equation*}
\end{enumerate}
\end{prop}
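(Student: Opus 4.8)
The plan is to obtain Proposition~\ref{lastprod} as a refinement of Theorem~\ref{moreproducts}: the latter already pins down the pair $(f,h)$ as soon as $f*h$ and $h*f$ are one-slice preserving but not slice preserving, and it only remains to exploit the extra hypothesis $f\in\So_{I_0}(\Omega)\setminus\So_\R(\Omega)$. First I would dispose of the degenerate possibility $f*h\in\So_\R(\Omega)$: by the remark recalled earlier characterising when a $*$-product is slice preserving, this forces $f$ and $h^c$ to be $\So_\R(\Omega)$-linearly dependent, say $\beta h^c=-\alpha f$ with $\alpha,\beta\in\So_\R(\Omega)\setminus\{0\}$; dividing by the nonzero slice preserving factor $\beta$ and using that $\So_{I_0}(\Omega)$ is a submodule stable under regular conjugation gives $h\in\So_{I_0}(\Omega)$, hence also $h*f\in\So_\R(\Omega)$, and taking $M_0=N_0=I_0$ puts us in case~(i) with $\C_{I_0}=\C_{M_0}$ and $f,h\in\So_{M_0}(\Omega)$. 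From now on $f*h\notin\So_\R(\Omega)$, so $h*f\notin\So_\R(\Omega)$ too, the preserved slices $\C_{M_0},\C_{N_0}$ are uniquely determined, and Theorem~\ref{moreproducts} applies with its ``$I_0$'' being our $M_0$ and its ``$M_0$'' being our $N_0$.

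Assume $\C_{M_0}=\C_{N_0}$ (case~(i)). Theorem~\ref{moreproducts}(i) leaves two options. If $f,h\in\So_{M_0}(\Omega)$, then $f\in\So_{M_0}(\Omega)\cap\So_{I_0}(\Omega)$, so Proposition~\ref{GMP} forces $\C_{I_0}=\C_{M_0}$ (the alternative $f\in\So_\R(\Omega)$ being excluded); this is the first possibility of~(i). Otherwise $f=\tilde f*K_0$, $h=J_0*\tilde h$ with $\tilde f,\tilde h\in\So_{M_0}(\Omega)\setminus\{0\}$ and $J_0,K_0\in\SF$ orthogonal to $M_0$. Writing $\tilde f=\tilde f_0+\tilde f_1M_0$ we get $f=\tilde f_0K_0+\tilde f_1\,M_0K_0$, a function with vanishing real part whose vectorial part lies in the fixed plane ${\rm Span}_{\R}\{K_0,M_0K_0\}=\{L\in{\rm Im}\HH\,:\,L\perp M_0\}$. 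But $f\in\So_{I_0}(\Omega)$ forces $f_v=f_1I_0$ with $f_1\in\So_\R(\Omega)$ (Proposition~\ref{GMP}), and $f_v\not\equiv0$ since $f\notin\So_\R(\Omega)$; hence $I_0\perp M_0$ and $f=\alpha I_0$ with $\alpha=f_1\in\So_\R(\Omega)\setminus\{0\}$, which together with the stated shape of $h$ is the second possibility of~(i).

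Assume now $\C_{M_0}\neq\C_{N_0}$ (case~(ii)). Theorem~\ref{moreproducts}(ii) --- whose use of Notation~\ref{notation} is for the ordered pair (slice of $f*h$, slice of $h*f$), i.e.\ for precisely the frame $M_0,L_0,K_0$ and the numbers $a,b$ fixed in the statement --- provides $\tilde f,\tilde h\in\So_{M_0}(\Omega)\setminus\{0\}$ with
\begin{equation*}
f=\tilde f*\Bigl(1+\tfrac{a\pm1}{b}K_0\Bigr),\qquad h=\Bigl(1-\tfrac{a\pm1}{b}K_0\Bigr)*\tilde h,
\end{equation*}
so $h$ is already in the claimed form. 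Set $t=\tfrac{a\pm1}{b}$, which is nonzero since $a^2+b^2=1$ and $b>0$ give $|a|<1$. Writing $\tilde f=\tilde f_0+\tilde f_1M_0$ and using $M_0K_0=-L_0$ one computes $f=\tilde f_0+\tilde f_1M_0-t\tilde f_1L_0+t\tilde f_0K_0$, so $f_v=\tilde f_1M_0-t\tilde f_1L_0+t\tilde f_0K_0$. Imposing $f\in\So_{I_0}(\Omega)$ means $f_v=\mu I_0$ for some $\mu\in\So_\R(\Omega)$; with $I_0=lM_0+mL_0+nK_0$, comparison of coefficients in the basis $M_0,L_0,K_0$ yields $\tilde f_1=\mu l$, $-t\tilde f_1=\mu m$, $t\tilde f_0=\mu n$, and $\mu\not\equiv0$ (otherwise $f\equiv0$). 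Since $\So(\Omega)$ is an integral domain, the first two relations force $m+tl=0$, i.e.\ $bm+l(a\pm1)=0$; and, in the non-degenerate regime $l\neq0$ (equivalently $m\neq0$), the first and third relations give $\tilde f_0=-\tfrac{n}{m}\tilde f_1$, hence $\tilde f=\tilde f_1\bigl(-\tfrac{n}{m}+M_0\bigr)$ and $f=\alpha\bigl(-\tfrac{n}{m}+M_0\bigr)*\bigl(1+\tfrac{a\pm1}{b}K_0\bigr)$ with $\alpha=\tilde f_1\in\So_\R(\Omega)$, as wanted.

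The work is mostly bookkeeping, and I expect the real care to be needed in only two spots. One must check that the orthonormal frame and the constants $a,b$ generated by Notation~\ref{notation} inside the invoked Theorem~\ref{moreproducts} are literally those appearing in the statement of the proposition --- they are, both being built from the pair (slice of $f*h$, slice of $h*f$) --- so that no sign translation intervenes. And one must handle the degenerate sub-case $l=0$ of~(ii): there necessarily $\tilde f_1\equiv0$, $I_0=\pm K_0$ and $f=\tilde f_0(1\pm tI_0)$ with $\tilde f_0\in\So_\R(\Omega)$, an $f$ of the special type ``slice preserving function times a fixed quaternion'', which should be recorded separately (or absorbed into the statement). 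This corner case, together with keeping track of the two admissible choices of the sign $\pm$, is the only genuine obstacle.
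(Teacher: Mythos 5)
Your proof is correct and follows essentially the same route as the paper's: invoke Theorem~\ref{moreproducts}, expand $\tilde f=\tilde f_0+\tilde f_1M_0$, and impose $f\in\So_{I_0}(\Omega)$ by comparing coefficients in the basis $M_0,L_0,K_0$ via Proposition~\ref{GMP}. The corner case you flag in (ii) (namely $l=m=0$, $I_0=\pm K_0$, $\tilde f_1\equiv 0$, where the expression $-n/m$ is meaningless) is genuinely present and is not handled by the paper either — its proof simply asserts $\tilde f_1\not\equiv 0$ without justification — so your decision to record it separately is warranted rather than a defect.
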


\begin{proof}
\textit{(i)} In this case Theorem~\ref{moreproducts} entails that either $f,h\in \So_{M_0}(\Omega)$ so that $\C_{I_0}=\C_{M_0}$
or there exist $J_{0},K_{0}\in\SF$ both orthogonal to $M_{0}$ and $\tilde f, \tilde h\in\So_{M_{0}}(\Omega)\setminus\{0\}$ such that
$f=\tilde f*K_{0}$ and $h=J_{0}*\tilde h$. Setting $\tilde f=\tilde f_0+\tilde f_1 M_0$,
a trivial computation shows that $f=\tilde f_0 K_0+\tilde f_1 M_0 K_0$.
Now consider the orthonormal basis $M_{0},K_{0},L_{0}=M_{0}K_{0}$ of ${\rm Im}\HH$ and write
$I_{0}=aM_{0}+bK_{0}+cL_{0}$. Since $f\in\So_{I_{0}}(\Omega)$ there exist $\alpha_{0},\alpha_{1}\in\So_{\R}(\Omega)$ such that $f=\tilde{f_{0}}K_{0}+\tilde{f_{1}}L_{0}=\alpha_{0}+\alpha_{1}I_{0}=\alpha_{0}+a\alpha_{1}M_{0}+b\alpha_{1}K_{0}+c\alpha_{1}L_{0}$. Uniqueness given by 
Proposition~\ref{GMP} entails $\alpha_{0}=0$, $a=0$, $\tilde{f_{0}}=b\alpha_{1}$, $\tilde{f_{1}}=c\alpha_{1}$, so that $I_{0}\perp M_{0}$ and $f=\alpha_{1}I_{0}$.
\\
\textit{(ii)} Again by Theorem~\ref{moreproducts} there exist $\tilde f, \tilde h\in\So_{M_{0}}(\Omega)\setminus\{0\}$, such that 
\begin{equation*}
f=\tilde f*\left(1+\frac{a \pm1}{ b }K_0\right),\qquad
h=\left(1-\frac{a \pm1}{ b }K_0\right)*\tilde h.
\end{equation*}
Setting $\tilde f=\tilde f_0+\tilde f_1 M_0$
we find that $f_0=\tilde f_0$ and 
$$f_v=\tilde f_1 M_0+\tilde f_0 \frac{a\pm 1}{b}K_0+\tilde f_1\frac{a\pm 1}{b}M_0K_0=
\tilde f_1 M_0-\tilde f_1\frac{a\pm 1}{b}L_0+\tilde f_0 \frac{a\pm 1}{b}K_0.$$
As $f\in\So_{I_0}(\Omega)\setminus\So_{\R}(\Omega)\setminus\{0\}$ there exists $\gamma\in\So_{\R}(\Omega)$ such that
$f_v=\gamma I_0$ which is equivalent to
\begin{equation}\label{moreprodcase2}
\begin{cases}
\tilde f_1=\gamma l\\
-\frac{a\pm1}{b}\tilde f_1=\gamma m\\
\frac{a\pm 1}{b}\tilde f_0=\gamma n
\end{cases}
\end{equation}
where $I_0=lM_0+mL_0+nK_0$. Since $\tilde f_1\not\equiv 0$ the comparison between the first two equations in~\eqref{moreprodcase2}
gives $bm+l(a\pm1)=0$,
while last two equations entail $\tilde f_0=-\frac{n}{m}\tilde f_1$, that is 
$$
\tilde f=-\frac{n}{m}\tilde f_1+\tilde f_1 M_0=\tilde f_1\left(-\frac{n}{m}+M_0\right).
$$
Setting $\alpha=\tilde f_1$ ends the proof.
\end{proof}

\begin{remark}
We point out that, again
by direct computation, we have that conditions \textit{(i)} and \textit{(ii)} in Proposition~\ref{lastprod} are also sufficient in order to obtain $f*h\in\So_{M_0}(\Omega)$ and $h*f\in\So_{N_0}(\Omega)$.
\end{remark}

\section{$*$-Powers}

In order to conclude our investigation on the structure of one-slice preserving functions, we turn our attention to the problem of classifying slice regular functions whose $*$-powers preserve 
one single slice or all of them. To rule out trivial cases, in this section we will always
consider $f\in\So(\Omega)\setminus\So_{\R}(\Omega)$, which means that the vectorial part of $f$ is not identically zero.

The first tool we need is the following computation of the $*$-powers of $f$ in
terms of the components of the splitting $f=f_{0}+f_{v}$.

\begin{lemma}\label{conto-potenza}
Let $f=f_0+f_v\in\So(\Omega)$, then 
$$
f^{*d}=\sum_{n=0}^{[d/2]}(-1)^n\binom{d}{2n}f_0^{d-2n}(f_v^s)^{n}+
\left(\sum_{n=0}^{[(d-1)/2]}(-1)^n\binom{d}{2n+1}f_0^{d-(2n+1)}(f_v^s)^{n}\right)f_v.
$$
\end{lemma}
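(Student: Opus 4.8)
**Proof plan for Lemma (computation of $f^{*d}$).**

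The natural approach is induction on $d$, exploiting the fact that the vectorial part $f_v$ behaves formally like an imaginary unit: by the remark following Definition~\ref{starprod} and the formula
$$
f*g=f_0g_0-\langle f_v,g_v\rangle_*+f_0g_v+g_0f_v+f_v\pv g_v,
$$
when we $*$-multiply a scalar-plus-$f_v$ expression by $f=f_0+f_v$ the cross-product term $f_v\pv f_v$ vanishes, so $f*f=f_0^2-f_v^s+2f_0f_v$ and, more generally, every $*$-power of $f$ stays of the form $A_d+B_df_v$ with $A_d,B_d\in\So_\R(\Omega)$. Concretely I would set
$$
f^{*d}=A_d+B_d f_v,\qquad A_d,B_d\in\So_\R(\Omega),
$$
and derive the recursion obtained from $f^{*(d+1)}=f^{*d}*f=(A_d+B_df_v)*(f_0+f_v)$. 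Using $f_v*f_v=f_v^{s}$... wait, more precisely $f_v*f_v=-f_v^s$ (since $f_v^s=f_v*f_v^c=-f_v*f_v$ as $f_v^c=-f_v$), and $f_v\pv f_v=0$, this gives
$$
A_{d+1}=f_0A_d-f_v^{s}B_d,\qquad B_{d+1}=A_d+f_0B_d,
$$
with $A_0=1$, $B_0=0$ (or $A_1=f_0$, $B_1=1$). This is exactly the recursion satisfied by the real and imaginary parts of $(f_0+\sqrt{-f_v^s}\,)^{d}$ if one thinks of $\sqrt{-f_v^s}$ as $\sqrt{f_v^s}\cdot\sqrt{-1}$.

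The plan is then to verify that the claimed closed forms
$$
A_d=\sum_{n=0}^{[d/2]}(-1)^n\binom{d}{2n}f_0^{d-2n}(f_v^s)^{n},\qquad
B_d=\sum_{n=0}^{[(d-1)/2]}(-1)^n\binom{d}{2n+1}f_0^{d-(2n+1)}(f_v^s)^{n}
$$
satisfy this recursion. Substituting the closed forms into $A_{d+1}=f_0A_d-f_v^sB_d$ and $B_{d+1}=A_d+f_0B_d$ reduces, after collecting the coefficient of $f_0^{d+1-2n}(f_v^s)^n$, to the Pascal identities $\binom{d}{2n}+\binom{d}{2n-1}=\binom{d+1}{2n}$ and $\binom{d}{2n+1}+\binom{d}{2n}=\binom{d+1}{2n+1}$; one only has to be slightly careful with the top index of summation depending on the parity of $d$, checking that the extra or missing boundary term is zero because the relevant binomial coefficient vanishes. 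This is entirely routine once the recursion is in place.

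The only genuine subtlety — hence the step I would state carefully rather than gloss over — is the justification that powers of $f_v$ commute and collapse as indicated, i.e.\ that $f_v*(A+Bf_v)=Af_v-Bf_v^s$ with no surviving $\pv$ term. This rests on $f_v\pv f_v=0$ (immediate from the intrinsic formula $f\pv g=\tfrac12((f*g)-(g*f))$) together with the observation that all functions appearing as ``scalar'' coefficients, namely $A_d,B_d,f_0,f_v^s$, lie in $\So_\R(\Omega)$ and therefore $*$-commute with everything and act by pointwise multiplication. Once that bookkeeping is clean, the induction closes immediately: the base case $d=1$ is trivial ($A_1=f_0$, $B_1=1$), and the inductive step is the Pascal computation above.
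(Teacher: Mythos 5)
Your argument is correct, but it takes a slightly different route from the paper's. The paper expands $(f_0+f_v)^{*d}$ directly by the binomial theorem — legitimate because $f_0\in\So_\R(\Omega)$, so it $*$-commutes with $f_v$ — obtaining $\sum_{m=0}^d\binom{d}{m}f_0^{d-m}f_v^{*m}$, and then collapses the even and odd powers of $f_v$ using $f_v*f_v=-f_v*f_v^c=-f_v^s$; the whole proof is two lines. You instead prove the same identity by induction on $d$, writing $f^{*d}=A_d+B_df_v$, deriving the recursion $A_{d+1}=f_0A_d-f_v^sB_d$, $B_{d+1}=A_d+f_0B_d$, and verifying the closed forms via Pascal's rule with the boundary terms killed by vanishing binomial coefficients. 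Both proofs rest on exactly the same two algebraic facts ($f_0$, $f_v^s$ and the $A_d,B_d$ are slice preserving and hence central for the $*$-product, and $f_v^{*2}=-f_v^s$, with $f_v\pv f_v=0$); your version trades the one-line binomial expansion for the recursion-plus-Pascal bookkeeping, which is more laborious but makes the commutation hypotheses you are using completely explicit at each step. Your self-correction of the sign ($f_v*f_v=-f_v^s$, not $+f_v^s$) is the only point where the argument could have gone wrong, and you landed on the right sign, so the proof closes.
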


\begin{proof}
Since $f_v*f_v=-f_v*f_v^c=-f_v^s$, we have 
\begin{align*}
f^{*d}&=(f_0+f_v)^{*d}=\sum_{m=0}^d\binom{d}{m}f_0^{d-m}(f_v)^{*m}\\
&=\sum_{n=0}^{[d/2]}(-1)^n\binom{d}{2n}f_0^{d-2n}(f_v^s)^{n}+
\left(\sum_{n=0}^{[(d-1)/2]}(-1)^n\binom{d}{2n+1}f_0^{d-(2n+1)}(f_v^s)^{n}\right)f_v.
\end{align*}
\end{proof}

\begin{remark}
Notice that the above computation for $d=2$ entails that
$f^{*2}\in\So_{\R}(\Omega)$ if and only if $f_{0}\equiv 0$. 
\end{remark}

Because of the above remark, from now on we take into account only functions
$f=f_{0}+f_{v}$ with $f_{0}\not\equiv 0$.
Moreover, in order to avoid trivial statements, if we are looking for 
a suitable $*$-power $d>2$ of $f$ which is one-slice preserving,
we rule out the case in which $f$ itself is one-slice preserving. In particular, thanks to Lemma~\ref{conto-potenza}, under this hypothesis we have the following
\begin{remark}
Let $f=f_0+f_v\in \So(\Omega)$ which preserves no slice. Then
  $f^{*d}$ is one-slice preserving if and only if $f^{*d}$ is slice preserving.
This last condition
is equivalent to
\begin{equation}\label{power}
\sum_{n=0}^{[(d-1)/2]}(-1)^n\binom{d}{2n+1}f_0^{d-(2n+1)}(f_v^s)^{n}\equiv 0.
\end{equation}
\end{remark}
To simplify the statement of the results, from now on we only consider functions $f=f_0+f_v\in \So(\Omega)$ such that $f$ preserves no slice.
In order to carry on our investigation we need to set some notation and 
quote a result on the real roots of a binary form.

We denote by $Q_d(x,y)$ the homogeneous polynomial of degree $d$ given by  
$$Q_d(x,y)=\sum_{n=0}^{[(d-1)/2]}(-1)^n\binom{d}{2n+1}x^{d-(2n+1)}y^{2n+1}.$$
We denote by $\Sigma_d\subset{\mathbb P}(\R^2)\sim\R\cup\{\infty\}$ the set of roots of $Q_d$ different from $0$ and $\infty$; a straightforward computation shows that $0=[0:1]$ is a root of $Q_d$ and only if $d$ is even and that $\infty=[1:0]$ is  always a root of $Q_d$.
Due to Proposition 41 in~\cite{M}, see also~\cite{C-R}, $Q_d$ has $d$ real distinct roots and therefore $\Sigma_d$ contains $d-2$ elements if $d$ is even and $d-1$ if $d$ is odd. 

Now choose $q_0\in\Omega\cap\R$ such that $f_0(q_0)\neq 0$ and $f_v(q_0)\neq 0$. Then we can choose a suitable spherical neighborhood $U=B_{q_0}(r)$ of the point $q_0$ where both $f_0$ and $f_v$ never vanish, which entails that also $f_v^s$ is never-vanishing on $U$. Thus Corollary 3.2 in~\cite{A-dF} gives the existence of $\rho\in\So_\R(U)$ such that 
$\rho^2=f_v^s$ on $U$.   

Equality~\eqref{power} thus implies that on $U$ we have
\begin{equation*}
\sum_{n=0}^{[(d-1)/2]}(-1)^n\binom{d}{2n+1}f_0^{d-(2n+1)}\rho^{2n+1}
\equiv 0.
\end{equation*}

In particular, at any $q\in U$ the point $[f_0(q):\rho(q)]\in\R\setminus\{0\}$ is a root of $Q_d$ and thus there exists $\xi(q)\in \Sigma_d$ such that $[f_0(q):\rho(q)]=\xi(q)$. As $U$ is connected, $\Sigma_d$ is a finite subset of $\R$ and the map $U\ni q\mapsto (f_0(q),\rho(q))\in \R^2\setminus\{(0,0)\}$ is continuous, then $\xi(q)$ is constant and hence $f_0=\xi \rho$ on $U$. The Identity Principle then entails that $f_0^2=\xi^2 \rho^2=\xi^2 f_v^s$ on all $\Omega$.
The above argument can be summarized as follows:
\begin{prop}\label{potenza-d-esima}
Let $f=f_{0}+f_{v}\in \So(\Omega)$.
There exists $d>2$ such that $f^{*d}$ is slice preserving
if and only if there exists $\xi\in\Sigma_d$ 
such that $f_{0}^{2}\equiv \xi^2 f_{v}^{s}$,
that is $f_{0}$ is a square root of $\xi^2 f_{v}^{s}$.
\end{prop}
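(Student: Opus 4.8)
The plan is to show the two directions separately, using Lemma~\ref{conto-potenza} to reduce everything to the vanishing of the ``odd part'' of $f^{*d}$, i.e.\ to Equality~\eqref{power}. As observed in the remark preceding the statement, since $f$ preserves no slice, $f^{*d}$ is one-slice preserving iff it is slice preserving iff \eqref{power} holds; so the content is entirely in analyzing \eqref{power}. For the ``if'' direction, suppose $f_0^2 \equiv \xi^2 f_v^s$ for some $\xi \in \Sigma_d$. Then for every $n$ one has $f_0^{d-(2n+1)}(f_v^s)^n = f_0^{d-(2n+1)} (f_0^2/\xi^2)^n = \xi^{-2n} f_0^{d-1}$ whenever $d-(2n+1)\geq 1$, so the left-hand side of \eqref{power} becomes $f_0^{d-1}\sum_n (-1)^n \binom{d}{2n+1}\xi^{-2n}$; multiplying by $\xi^{d-1}$ this is $f_0^{d-1}\xi^{-(d-1)}\cdot\xi^{-1}Q_d(\xi,1)\cdot(\text{up to rescaling})$ — more cleanly, $\sum_n(-1)^n\binom{d}{2n+1}\xi^{d-(2n+1)} = Q_d(\xi,1) = 0$ because $[\xi:1]$ is by definition a root of $Q_d$. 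Hence \eqref{power} holds and $f^{*d}$ is slice preserving. (One must be slightly careful about the single term with $d-(2n+1)=0$, which occurs only when $d$ is even; there the factor $f_0^0$ is just $1$, and the same substitution $f_v^s = f_0^2/\xi^2$ still collapses the sum to $f_0^{d-1}$ times $Q_d(\xi,1)/\xi = 0$; alternatively note $f_0^{d-1}$ is never identically zero since $f_0\not\equiv 0$ and $\So_\R(\Omega)$ is a domain.)

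For the ``only if'' direction I would argue exactly as in the paragraph preceding the statement, which already contains the full argument; the proof is simply to record it. Assuming $f^{*d}$ is slice preserving, pick a real point $q_0 \in \Omega\cap\R$ with $f_0(q_0)\neq 0$ and $f_v(q_0)\neq 0$ — such a point exists because $\Omega$ is a slice domain, $f_0\not\equiv 0$, and $f_v\not\equiv0$ (the latter since $f\notin\So_\R(\Omega)$). On a small spherical neighbourhood $U$ of $q_0$ both $f_0$ and $f_v^s$ are zero-free, so by Corollary~3.2 in~\cite{A-dF} there is $\rho\in\So_\R(U)$ with $\rho^2 = f_v^s$ on $U$. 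Substituting into \eqref{power} gives $Q_d(f_0,\rho)\equiv 0$ on $U$. Since $Q_d$ has only the real roots $\{0,\infty\}\cup\Sigma_d$, and at every $q\in U$ we have $f_0(q)\neq 0$ (so $[f_0(q):\rho(q)]\neq [0:1]$) and $\rho(q)\neq 0$ (so it is not $[1:0]$ either), the continuous map $q\mapsto [f_0(q):\rho(q)]$ takes values in the finite set $\Sigma_d$; connectedness of $U$ forces it to be a constant $\xi\in\Sigma_d$, whence $f_0 = \xi\rho$ on $U$ and therefore $f_0^2 = \xi^2\rho^2 = \xi^2 f_v^s$ on $U$. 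Finally the Identity Principle (\cite{G-S-St}, Theorem~1.12) propagates the identity $f_0^2 \equiv \xi^2 f_v^s$ between two slice preserving functions from $U$ to all of $\Omega$.

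The main obstacle is largely bookkeeping rather than a deep difficulty: one must make sure that $\rho$ (hence $\xi$) is real-valued, i.e.\ that $\rho \in \So_\R$ and not merely a square root in some larger ring — this is precisely why Corollary~3.2 of~\cite{A-dF} is invoked on the small neighbourhood where $f_v^s$ is zero-free. A second point requiring care is the boundary cases of the root set of $Q_d$: one needs that $[f_0:\rho]$ avoids both $0$ and $\infty$ on $U$, which is exactly guaranteed by the choice of $q_0$ with $f_0(q_0)\neq 0$ and $f_v(q_0)\neq 0$. Everything else — the binomial identity collapsing \eqref{power} under the hypothesis $f_0^2 = \xi^2 f_v^s$, and the fact that $\Sigma_d$ is finite (from Proposition~41 in~\cite{M}) — is routine, and the use of the Identity Principle to pass from $U$ to $\Omega$ is standard for slice preserving functions.
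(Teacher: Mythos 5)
Your proposal is correct and takes essentially the same route as the paper: the ``only if'' direction is precisely the argument the authors give in the paragraph immediately preceding the statement (local square root $\rho$ of $f_v^s$ on a spherical neighbourhood, constancy of $[f_0:\rho]\in\Sigma_d$ by connectedness and finiteness of $\Sigma_d$, then the Identity Principle), and the ``if'' direction is the routine substitution collapsing \eqref{power} to $f_0^{d-1}Q_d(\xi,1)\equiv 0$, which the paper leaves implicit. One harmless slip in your parenthetical: the term with exponent $d-(2n+1)=0$ occurs when $d$ is \emph{odd}, not even, but your computation (multiplying through by $\xi^{d-1}$ and using $\xi^2 f_v^s=f_0^2$) handles that case uniformly anyway.
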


If the  first fundamental group of $\Omega_I=\Omega\cap \C_I$ is trivial, then the set of slice regular functions 
whose $d-th$ $*$-power belongs to $\So_\R(\Omega)$ can be characterized even more precisely.
Indeed we have 
\begin{cor}
Let $f=f_{0}+f_{v}\in \So(\Omega)$ and suppose $\pi_1(\Omega_I)=\{0\}$ for some $I\in\SF$.
There exists $d>2$ such that $f^{*d}$ belongs to $\So_{\R}(\Omega)$
if and only if the zero set of $f_v$ does not contain  non real isolated zeroes of odd multiplicities and
there exists $\xi\in\Sigma_d$ 
such that $f_{0}$ is a square root of $\xi^2 f_{v}^{s}$.
\end{cor}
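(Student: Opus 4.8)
The plan is to read the corollary off Proposition~\ref{potenza-d-esima} by making explicit, under the simple-connectedness hypothesis, what the algebraic condition there imposes on the zero set of $f_{v}$.

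By Proposition~\ref{potenza-d-esima}, the existence of $d>2$ with $f^{*d}\in\So_{\R}(\Omega)$ is equivalent to the existence of $\xi\in\Sigma_{d}$ with $f_{0}^{2}\equiv\xi^{2}f_{v}^{s}$. Since every $\xi\in\Sigma_{d}$ is a nonzero real, the latter equality is the same as $(f_{0}/\xi)^{2}\equiv f_{v}^{s}$ with $f_{0}/\xi\in\So_{\R}(\Omega)$; in other words it says precisely that $f_{0}$ is a square root of $\xi^{2}f_{v}^{s}$ and, as a by-product, that $f_{v}^{s}$ admits a square root in $\So_{\R}(\Omega)$. Hence it suffices to prove the equivalence: $f_{v}^{s}$ has a square root in $\So_{\R}(\Omega)$ if and only if the zero set of $f_{v}$ contains no non-real isolated zero of odd multiplicity. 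Granting this, the two conditions of Proposition~\ref{potenza-d-esima} become exactly the conjunction in the statement (recall that $\So_{\R}(\Omega)$ is by definition the set of slice preserving functions).

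To prove that equivalence I would argue much as in the corollary following Theorem~\ref{one-slice}. First, $f_{v}^{s}$ is a sum of squares of functions in $\So_{\R}(\Omega)$, hence non-negative on $\Omega\cap\R$, and a real zero of $f_{v}$ of multiplicity $k$ gives a zero of $f_{v}^{s}$ of multiplicity $2k$, so the real zeros of $f_{v}^{s}$ always have even order. Its non-vanishing part admits a square root in $\So_{\R}(\Omega)$ by Corollary~3.2 in~\cite{A-dF} — this is the only place where $\pi_{1}(\Omega_{I})=0$ is used — whereas, since $(q-\alpha)^{2}+\beta^{2}$ has no slice regular square root, the factor of $f_{v}^{s}$ supported on the $2$-spheres admits a square root in $\So_{\R}(\Omega)$ if and only if $f_{v}^{s}$ has even order at every sphere $\SF_{\alpha+I\beta}\subset\Omega$. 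Finally, applying the factorization~\eqref{factorzero} to $f_{v}$ at such a sphere, $f_{v}(q)=[(q-\alpha)^{2}+\beta^{2}]^{m}(q-p_{1})*\cdots*(q-p_{n})*g(q)$ with $g$ non-vanishing on $\SF_{\alpha+I\beta}$, and using that symmetrization is multiplicative with $(q-p_{\nu})^{s}=(q-\alpha)^{2}+\beta^{2}$, one gets $f_{v}^{s}(q)=[(q-\alpha)^{2}+\beta^{2}]^{2m+n}\,g^{s}(q)$ with $g^{s}$ non-vanishing there; thus the order of $f_{v}^{s}$ at the sphere equals $2m+n$, where $2m$ is the spherical and $n$ the isolated multiplicity of $f_{v}$. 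So $f_{v}^{s}$ has even order at every $2$-sphere exactly when every such $n$ is even, i.e.\ exactly when $f_{v}$ has no non-real isolated zero of odd multiplicity, which is the claimed equivalence.

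I expect the one genuinely delicate step to be this last bookkeeping — correctly relating the order of $f_{v}^{s}$ at a $2$-sphere to the spherical plus isolated multiplicity of $f_{v}$ via~\eqref{factorzero} — and, on the conceptual side, keeping track of which implication is really being used: the proof of the corollary itself only needs the elementary direction ``a perfect square has even order at every sphere, hence $f_{v}$ has no non-real isolated zero of odd multiplicity'', while the hypothesis $\pi_{1}(\Omega_{I})=0$ is what supplies the converse and so upgrades Proposition~\ref{potenza-d-esima} to the sharper geometric characterization.
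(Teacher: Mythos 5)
Your proposal is correct and follows essentially the same route as the paper: reduce to Proposition~\ref{potenza-d-esima}, observe that $f_0^2\equiv\xi^2 f_v^s$ exhibits $f_0/\xi$ as a square root of $f_v^s$ in $\So_\R(\Omega)$, and then invoke the equivalence (under $\pi_1(\Omega_I)=0$) between the existence of such a square root and the absence of non-real isolated zeroes of odd multiplicity for $f_v$. The only difference is that the paper simply cites Corollary~3.2 of~\cite{A-dF} for that last equivalence, whereas you re-derive it via the factorization~\eqref{factorzero} and the multiplicativity of the symmetrization; your bookkeeping ($f_v^s$ has order $2m+n$ at the sphere, hence even order iff $n$ is even) is accurate.
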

\begin{proof}
By Proposition~\ref{potenza-d-esima} there exists $\xi\in\Sigma_d$ 
such that $f_{0}^{2}\equiv \xi^2 f_{v}^{s}$ which is equivalent to $f_v^s=\frac{f_0^2}{\xi^2}$. 
Then the functions $f_v^s$ has a square root $\frac{f_0}{\xi}\in\So_\R(\Omega)$; 
the hypothesis on the first fundamental group of $\Omega_I$ together with Corollary~3.2 in~\cite{A-dF}, entail that
this is equivalent
to the fact that the zero set of $f_v$  does not contain  non real isolated zeroes of odd multiplicities. 
\end{proof}
The following example contains the explicit expressions of $Q_d$ and $\Sigma_d$ for $d=3,\dots, 10$.
\begin{example}\label{exbinforms}
$\phantom{}$
\\
\begin{center}
\begin{tabular}{| l || l | p{6.5cm} |}
\hline
$d$ & $Q_d(x,y)$ & $\Sigma_d$ \\ \hline \hline
$3$ & $3x^2y-y^3$ & $\left\{\pm\frac{\sqrt3}3\right\}$ \\ \hline
$4$ & $4x^3y-4xy^3 $ & $\left\{ \pm1\right\}$ \\ \hline
$5$ &$5x^4y-10x^2y^3+y^5$ & $\left\{ \pm\frac{\sqrt{25\pm 10\sqrt5}}{5}\right\}$\\ \hline
$6$ &$6x^5y-20x^3y^3+6xy^5$ &$\left\{ \pm\frac{\sqrt{3}}{3}, \pm \sqrt3\right\}$ \\ \hline
\multirow{3}{*}{$7$} & \multirow{3}{*}{$7x^6y-35x^4y^3+21x^2y^5-y^7$} & $\left\{ \pm
\left(\frac{1}{3} \left(5+8 \cos \left(\frac{1}{3} \tan ^{-1}\left(\frac{3
   \sqrt{3}}{13}\right)\right)\right)\right)^{\frac{1}{2}},\right.$\\
   & & $\pm \left(\frac{1}{3} \left(5\pm4 \sqrt{3} \sin \left(\frac{1}{3} \tan ^{-1}\left(\frac{3
   \sqrt{3}}{13}\right)\right)+\right.\right.$\\
& &   $\left.\left.\left.-4 \cos \left(\frac{1}{3} \tan ^{-1}\left(\frac{3
   \sqrt{3}}{13}\right)\right)\right)\right)^{\frac{1}{2}}
\right\}$ \\ \hline
$8$ &$8x^7y-56x^5y^3+56x^3y^5-8xy^7$ &$\left\{ \pm 1, \pm\sqrt{3\pm2\sqrt2}\right\}$ \\ \hline
\multirow{2}{*}{$9$} & \multirow{2}{*}{$9x^8y-84x^6y^3+126x^4y^5-36x^2y^7+y^9$}&$\left\{ \pm\frac{\sqrt{3}}{3}, 
\pm\left(3+\frac{8 \cos \left(\frac{\pi }{18}\right)}{\sqrt{3}}\right)^{\frac{1}{2}}, \right.$\\
& & $\left.\pm \left(3\pm4 \sin \left(\frac{\pi }{18}\right)-\frac{4 \cos \left(\frac{\pi
   }{18}\right)}{\sqrt{3}}\right)^{\frac{1}{2}}\right\}$ \\ \hline
$10$ &$10x^9y-120x^7y^3+252x^5y^5-120x^3y^7+10xy^9$ &$\left\{ 
\pm \sqrt{1\pm\frac{2}{\sqrt5}} , 
\pm \sqrt{5\pm2\sqrt{5}} 
\right\}$ \\ \hline
\end{tabular}
\end{center}

\begin{example}
If $d=4$ and $\pi_{1}(\Omega_{I_{0}})=0$, a function $f=f_{0}+f_{v}\in\So(\Omega)$ 
which preserves no slice and has non-zero real part is
such that $f^{*4}$ is slice preserving if and only if $f_{v}$ does not have non-real isolated zeroes
of odd multiplicity and $f_{0}=\pm\sqrt{f_{v}^{s}}$.
\end{example}

\end{example}

\bibliographystyle{amsplain}

\end{document}